\renewcommand*\@fnsymbol[1]{\the#1}
\theoremstyle{plain}
\newtheorem{theorem}{Theorem}[section]
\newtheorem{lemma}[theorem]{Lemma}
\newtheorem{proposition}[theorem]{Proposition}
\newtheorem{corollary}[theorem]{Corollary}
\theoremstyle{definition}
\newtheorem{remark}[theorem]{Remark}
\newtheorem{example}[theorem]{Example}
\newtheorem*{assumption1}{Assumption 1}
\newtheorem*{assumption2}{Assumption 2}
\newtheorem*{assumption3}{Assumption 3}
\newcommand{\Interior}{\mathop{\rm int}\nolimits}
\newcommand{\Int}{\mathop{\rm int}\nolimits}
\newcommand{\relint}{\mathop{\rm ri}\nolimits}
\newcommand{\rec}{\mathop{\rm rec}\nolimits}
\newcommand{\Cl}{\mathop{\rm cl}\nolimits}
\newcommand{\bd}{\mathop{\rm bd}\nolimits}
\newcommand{\co}{\mathop{\rm co}\nolimits}
\newcommand{\cone}{\mathop{\rm cone}\nolimits}
\newcommand{\aff}{\mathop{\rm aff}\nolimits}
\newcommand{\lin}{\mathop{\rm lin}\nolimits}
\newcommand{\ext}{\mathop{\rm ext}\nolimits}
\newcommand{\Span}{\mathop{\rm span}\nolimits}
\newcommand{\Barr}{\mathop{\rm bar}\nolimits}
\newcommand{\e}{\varepsilon}
\newcommand{\VaR}{\mathop {\rm VaR}\nolimits}
\newcommand{\ES}{\mathop {\rm ES}\nolimits}
\newcommand{\one}{\mathbbm{1}}
\newcommand{\risk}{\rho}
\newcommand{\Risk}{{\mathcal E}}
\newcommand{\cH}{\mathcal H}
\newcommand{\cI}{\mathcal I}
\newcommand{\cF}{\mathcal F}
\newcommand{\cG}{\mathcal G}
\newcommand{\cC}{\mathcal C}
\newcommand{\cQ}{\mathcal Q}
\newcommand{\cR}{\mathcal R}
\newcommand{\cV}{\mathcal V}
\newcommand{\cA}{\mathcal A}
\newcommand{\cK}{\mathcal K}
\newcommand{\cM}{\mathcal M}
\newcommand{\cN}{\mathcal N}
\newcommand{\cS}{\mathcal S}
\newcommand{\cX}{\mathcal X}
\newcommand{\cY}{\mathcal Y}
\newcommand{\cB}{\mathcal B}
\newcommand{\cU}{\mathcal U}
\newcommand{\probp}{\mathbb P}
\newcommand{\probq}{\mathbb Q}
\newcommand{\Riskeps}{\Risk_\varepsilon}
\newcommand{\R}{\mathbb R}
\newcommand{\N}{\mathbb N}
\newcommand{\E}{\mathbb E}
\newcommand{\refmultiple}{\cite[Proposition~1, 2, 3]{FarkasKochMunari2015}}
\newcommand{\refreductionlemma}{\cite[Lemma~2, 3]{FarkasKochMunari2015}}
\begin{document}

\title{
Existence, uniqueness and stability of\\
optimal portfolios of eligible assets
}

\author{
\sc{Michel Baes}
}
\affil{
Department of Mathematics, ETH Zurich, Switzerland\\
\texttt{mbaes@math.ethz.ch}
}
\author{
\sc{Pablo Koch-Medina},\,\sc{Cosimo Munari}
}
\affil{Center for Finance and Insurance and Swiss Finance Institute, University of Zurich, Switzerland\\
\texttt{pablo.koch@bf.uzh.ch},\,\,\texttt{cosimo.munari@bf.uzh.ch}
}

\date{\usdate\today}

\maketitle

\abstract{
In a capital adequacy framework, risk measures are used to determine the minimal amount of capital that a financial institution has to raise and invest in a portfolio of pre-specified eligible assets in order to pass a given capital adequacy test. From a capital efficiency perspective, it is important to identify the set of portfolios of eligible assets that allow to pass the test by raising the least amount of capital. We study the existence and uniqueness of such optimal portfolios as well as their sensitivity to changes in the underlying capital position. This naturally leads to investigating the continuity properties of the set-valued map associating to each capital position the corresponding set of optimal portfolios. We pay special attention to lower semicontinuity, which is the key continuity property from a financial perspective. This ``stability'' property is always satisfied if the test is based on a polyhedral risk measure but it generally fails once we depart from polyhedrality even when the reference risk measure is convex. However, lower semicontinuity can be often achieved if one if one is willing to focuses on portfolios that are close to being optimal. Besides capital adequacy, our results have a variety of natural applications to pricing, hedging, and capital allocation problems.
}

\bigskip

\noindent \textbf{Keywords}: capital adequacy, risk measures, optimal eligible assets, lower semicontinuity.

\medskip

\noindent {\bf Mathematics Subject Classification}: 91B30, 91B32



\parindent 0em \noindent


\section{Introduction}

This paper is concerned with the study of a class of set-valued maps that play a natural and important role in several areas of mathematical finance. For the sake of definiteness, we introduce our mathematical setting focusing on a specific field of application, namely capital adequacy. The link to other financial problems is highlighted below.

\subsubsection*{Acceptance sets, eligible assets, risk measures}

Financial institutions are required by regulators to hold an adequate capital buffer to protect liability holders from the risk of default should severe unexpected losses occur. Whether the capital base of an institution is sufficient or not is established by a regulatory capital adequacy test that is usually based on Value at Risk (Basel 2-3, Solvency 2) or Expected Shortfall (Basel 4, Swiss Solvency Test). Associated to this test is a risk measure, or capital requirement rule, that determines the minimum amount of capital an institution would have to {\em raise} to pass the regulatory test. This minimum amount will, however, depend on how this capital is {\em invested} once raised. Hence, the risk measure makes sense as a rule to determine capital requirements only once a set of admissible investments has been specified. In line with the original framework described in the seminal paper by Artzner et al.~(1999), the bulk of the literature on risk measures assumes, either explicitly or implicitly, that raised capital is held either in cash or invested in a single pre-specified traded asset, which, using the language of Artzner et al.~(2009), we call the {\em eligible asset}. It is worth emphasizing that raising the amount of capital determined by the risk measure only ensures acceptability of the institution if this amount is actually invested in the eligible asset.

\medskip

As has been pointed out in Artzner et al.~(2009) and, more recently, in Farkas et al.~(2015), limiting the investment choice to a {\em single} eligible asset instead of allowing investments in {\em portfolios} of multiple eligible assets is inefficient in that it generally leads to higher capital requirements. Note that a lower capital requirement does not mean that the institution is less safe. Indeed, the institution continues to satisfy the same capital adequacy standard. It is just that to reach acceptability less capital is needed if more investment choices are allowed.

\medskip

From a mathematical perspective, a capital adequacy test is represented by an acceptance set for capital positions. In the modelling process, one starts by specifying a {\em space of capital positions} $\cX$ (typically a space of random variables). Each element of $\cX$ represents the capital (assets net of liabilities) of a company at a pre-specified future date (time $1$). The set of capital positions that are acceptable from a regulatory perspective is represented by a suitable set $\cA\subset\cX$, which is called the {\em acceptance set}. Hence, a company with capital position $X\in\cX$ is deemed adequately capitalized if, and only if, $X$ belongs to $\cA$.

\medskip

If the capital position of a financial institution is not acceptable, then management needs to undertake remedial actions to reach acceptability. The remedial action considered in this paper is raising new capital (at time $0$) and investing it in a portfolio of $N$ frictionless and liquidly traded assets
\[
S^1=(S^1_0,S^1_1),\dots,S^N=(S^N_0,S^N_1),
\]
which are referred to as the {\em eligible assets}. For any $i\in\{1,\dots,N\}$ the quantity $S^i_0\in\R$ represents the price of one unit of the $i$th asset at time $0$ and $S^i_1\in\cX$ represents the payoff of one unit of the same asset at time $1$. We denote by $\cM$ the vector subspace of $\cX$ spanned by the above payoffs, i.e.
\[
\cM = \Span(S^1_1,\dots,S^N_1).
\]
The space $\cM$ is called the {\em space of eligible payoffs}. Every element in $\cM$ can be viewed as the payoff of a suitable portfolio of eligible assets. If the Law of One Price holds, so that every portfolio generating the same payoff has the same initial price, one can define a {\em pricing functional} $\pi:\cM\to\R$ by setting
\[
\pi(Z)=\sum_{i=1}^N\lambda^iS^i_0, \ \ \ \mbox{for any $\lambda\in\R^N$ such that} \ Z=\sum_{i=1}^N\lambda^iS^i_1.
\]

\smallskip

The minimal amount of capital that needs to be raised and invested in a portfolio of eligible payoffs to pass the prescribed acceptability test is represented by the {\em risk measure} $\risk:\cX\to\overline{\R}$ defined by
\[
\risk(X) = \inf\{\pi(Z) \,; \ Z\in\cM, \ X+Z\in\cA\}.
\]
By definition, the quantity $\risk(X)$ admits a natural operational interpretation as a {\em capital requirement}. The risk measures introduced by Artzner et al.~(1999) constitute the prototype of the above capital requirement functionals and correspond to the simplest form of management action, i.e.~raising capital and investing it in a single eligible asset. The extension to a multi-asset framework was first dealt with, to the best of our knowledge, in F\"{o}llmer and Schied (2002) and later taken up in Frittelli and Scandolo (2006), Artzner et al.~(2009) and in the more comprehensive study by Farkas et al.~(2015).

\subsubsection*{Optimal eligible payoffs}

This paper is concerned with the study of the set-valued map $\Risk:\cX\rightrightarrows\cM$ defined by
\[
\Risk(X) = \{Z\in\cM \,; \ X+Z\in\cA, \ \pi(Z)=\risk(X)\}.
\]
The set $\Risk(X)$ consists of all the eligible payoffs that ensure acceptability of the position $X$ at the minimal cost. The map $\Risk$ will be therefore called the {\em optimal payoff map} and every payoff in $\Risk(X)$ will be referred to as an {\em optimal payoff} for $X$. In this paper we address the following theoretical questions that are critical also from an operational perspective:
\begin{itemize}
  \item {\bf Existence of optimal payoffs}. Do optimal payoffs of eligible assets exist at all? This is equivalent to assessing if $\Risk$ is not empty valued.
  \item {\bf Uniqueness of optimal payoffs}. In case optimal payoffs exist, are they unique or does management have several alternatives from which to choose? This is equivalent to assessing if $\Risk$ associates a unique payoff to each position.
  \item {\bf Stability of optimal payoffs}. In case several optimal payoffs exist, how robust is the choice of a specific portfolio?  More specifically, we are interested in (1) assessing if an optimal allocation remains close to being optimal after a slight perturbation of the underlying position and (2) ensuring that the accuracy of the optimal allocation increases with the degree of approximation of the underlying position. This is equivalent to investigating suitable continuity properties of $\Risk$: (1) corresponds to lower semicontinuity and (2) to upper semicontinuity.
  \item {\bf Stability of nearly-optimal payoffs}. If the choice of optimal payoffs is not robust, can we ensure robustness by relaxing the optimality condition and looking at portfolios of eligible assets that yield acceptability at a cost that is slightly higher than optimal? This question is of practical relevance because, in applications, one is bound to accept a certain tolerance for deviations from optimality.
\end{itemize}

\medskip

Any analysis of capital requirements would not be complete without having answered the above questions. From a theoretical perspective, failing to answer them would amount to studying an optimization problem focusing only on the optimal value of the objective function and not paying attention to the structure of the solution set. From a practical perspective, finding an answer to these questions is critical since, for a capital regime to be operationally effective, it is necessary to make sure that managers know which actions they can undertake to meet capital requirements and that these actions are robust with respect to misestimations.

\subsubsection*{Applications to pricing, hedging, and capital allocation}

The preceding questions are relevant in a variety of other areas of mathematical finance. In the context of pricing in incomplete markets, where an element $X\in\cX$ is interpreted as the payoff of a non-replicable financial contract, the quantity
\[
\risk(-X) = \inf\{\pi(Z) \,; \ Z\in\cM, \ Z-X\in\cA\}
\]
can be naturally viewed as a {\em price (from a seller's perspective)}. In this case, the elements of the acceptance set $\cA$ represent acceptable replication errors. If $\cA$ is taken to be the set of positive elements of $\cX$ (provided $\cX$ is partially ordered), then we obtain the standard {\em superreplication price}. If $\cA$ contains also nonpositive elements so that imperfect superreplication may be acceptable, we are in the framework of {\em good deal pricing} or {\em pricing under acceptable risk}; see Cochrane and Saa-Requejo~(2000), Carr et al.~(2001) and Jaschke and K\"{u}chler~(2001). This pricing approach has recently gained renewed attention in the framework of {\em conic finance} developed by Madan and Cherny~(2010). We also refer to Arai and Fukasawa~(2014).

\medskip

The risk measures studied in this paper arise naturally also in a variety of risk minimization problems related to hedging. To see this, let $\cX$ be a space of random variables containing the constants and assume $X\in\cX$ represents a given future exposure. Then, one can rewrite $\risk$ as
\[
\risk(X) = \inf\{\rho_\cA(X-Z+\pi(Z)) \,; \ Z\in\cM\},
\]
where $\rho_\cA:\cX\to\overline{\R}$ is the simple (cash-based) single-asset risk measure defined by
\[
\rho_\cA(X) = \inf\{m\in\R \,; \ X+m\in\cA\}.
\]
This shows that $\risk(X)$ can be interpreted as the minimal level of risk, as measured by $\rho_\cA$, at which we can secure the exposure $X$ by setting up portfolios of eligible assets. Note also that
\[
\risk(X) = \inf\{\rho_\cA(X-Z)-\pi(Z) \,; \ Z\in\cM\},
\]
showing that $\risk$ can be expressed as the {\em infimal convolution} of $\rho_\cA$ and $-\pi$ (one can always extend $\pi$ to the entire $\cX$ by setting $\pi(X)=-\infty$ whenever $X\notin\cM$). For a thorough discussion on hedging via risk minimization and the role of infimal convolutions in this setting we refer to Barrieu and El Karoui~(2009) and the references therein. Infimal convolutions of risk measures has been studied, for instance, in Jouini et al.~(2008) and Filipovi\'{c} and Svindland~(2008). We also refer to the comprehensive discussion in R\"{u}schendorf~(2013).

\medskip

Finally, we highlight that functionals of the form $\risk$ have been recently studied in the context of capital allocation and systemic risk. In this setting, one interprets the elements of $\cX$ as $d$-dimensional random vectors whose components represent the capital positions of $d$ financial entities (different companies, subsidiaries of an individual company, different desks). Similarly, the elements of $\cM$ are interpreted as $d$-dimensional random vectors consisting of eligible payoffs (one could in principle allow for different spaces of eligible assets along the different components) and $\pi$ is given by the aggregated sum of the individual pricing functionals. Under these specifications, the quantity
\[
\risk(X) = \inf\left\{\sum_{i=1}^d\pi_i(Z_i) \,; \ (Z_1,\dots,Z_d)\in\cM, \ (X_1+Z_1,\dots,X_d+Z_d)\in\cA\right\}
\]
represents the smallest amount of capital that has to be raised and allocated, in the form of portfolios of eligible assets, across the various entities of the system to ensure acceptability. The {\em systemic risk measures} studied in Biagini et al.~(2015) have the above form. The {\em efficient cash-invariant allocation rules} introduced in the framework of systemic risk by Feinstein et al.~(2017) are also related to the above functionals. A variety of acceptance sets for multivariate positions has been discussed in Molchanov and Cascos~(2016). The special case corresponding to stand-alone acceptability based on Expected Shortfall is treated in Hamel et al.~(2013). The case of multivariate shortfall risk is thoroughly studied in Armenti et al.~(2017).

\subsubsection*{Our contribution}

While properties of risk measures themselves have been subjected to detailed scrutiny, the above four questions have not been systematically addressed before in the risk measure literature. A variety of results on existence (exactness in the language of infimal convolutions) and uniqueness of optimal payoffs have been established in the context of hedging and capital allocation with risk measures for univariate positions; see Barrieu and El Karoui~(2009) and the references therein. In this paper we provide a comprehensive discussion on existence and uniqueness in general spaces of positions, thus encompassing also the multivariate case. However, the most important and, to the best of our knowledge, novel contribution of the paper consists in the investigation of the critical question of stability. This question constitutes a classical problem in optimization, more specifically in the subfield of parametric optimization; see e.g.~Bank et al.~(1983) and the references therein. There, one would formulate the problem by focusing on the {\em constraint set mapping} $\cF:\cX\rightrightarrows\cM$ defined by
\[
\cF(X) = \{Z\in\cM \,; \ X+Z\in\cA\}.
\]

\smallskip

The risk measure $\risk$ and the optimal payoff map $\Risk$ can be expressed in terms of the map $\cF$ as
\[
\risk(X)=\inf\{\pi(Z) \,; \ Z\in\cF(X)\} \ \ \ \mbox{and} \ \ \ \Risk(X)=\{Z\in\cF(X) \,; \ \pi(Z)=\risk(X)\}.
\]
The risk measure $\risk$ thus corresponds to the {\em extreme value function} and the optimal payoff map $\Risk$ to the {\em optimal set mapping}. The study of continuity properties of optimal set mappings is a recurrent theme in parametric optimization that goes under the names of {\em qualitative stability} or {\em perturbation analysis}. It is well-established that, among the various notions of stability or continuity, the property of lower semicontinuity is the most challenging to deal with. Unfortunately, it turns out that none of the (few) standard results on lower semicontinuity can be applied to our setting; see Remark~\ref{rem: conditions for lsc}. However, the key property from a financial perspective is precisely lower semicontinuity in that it ensures that a small perturbation in the underlying position does not result in a dramatic change in the structure of the optimal portfolio. To address the stability question we are thus forced to develop a variety of new results that exploit the special structure of our optimal set mapping.

\medskip

The paper is structured as follows. In Section~2 we introduce the underlying model space and list some relevant examples. In Section~3 we discuss the basic properties of optimal payoff maps. The question on existence and uniqueness of optimal payoffs is dealt with in Section~4. In particular, Proposition~\ref{prop: existence solutions asymptotic cone} and Corollary~\ref{cor: existence solutions star shaped} highlight the link between existence and the absence of (scalable) good deals. Section~5 is devoted to the question on stability. After focusing on outer and upper semicontinuity, we provide a comprehensive discussion on the key property of lower semicontinuity. As illustrated by a number of examples, lower semicontinuity may fail even in the presence of common acceptance sets. However, by Theorem~\ref{theo: lower semicontinuity polyhedral}, we always have lower semicontinuity if the chosen acceptance set is polyhedral. In the last part of the section we relax the optimality condition and focus on nearly-optimal payoffs. As a consequence of Theorem~\ref{theo: epsilon lower semicontinuity}, we are able to establish several sufficient conditions for nearly-optimal payoff maps to be lower semicontinuous.

\section{The underlying model space}

In this section we describe our model for the space of positions and the space of eligible payoffs and introduce some notation and terminology. In order to cover all special spaces of univariate and multivariate positions encountered in the literature we work in the context of an abstract space. This also helps highlight the fundamental mathematical structure of our problem.

\subsubsection*{The space of positions}

We consider a one-period model where financial positions at the terminal date are represented by elements of a Hausdorff topological vector space over $\R$, which we denote by $\cX$. We assume that $\cX$ is first countable and locally convex. This means that every element of $\cX$ admits a countable neighborhood base consisting of convex sets. In particular, $\cX$ could be any normed space. The topological dual of $\cX$ is denoted by $\cX'$.

\medskip

The {\em interior}, the {\em closure} and the {\em boundary} of a set $\cA\subset\cX$ are denoted by $\Int\cA$, $\Cl\cA$ and $\bd\cA$, respectively. We say that $\cA$ is {\em star-shaped (about zero)} whenever $X\in\cA$ implies that $\lambda X\in\cA$ for every $\lambda\in[0,1]$. The set $\cA$ is said to be {\em convex} if $\lambda X+(1-\lambda)Y\in\cA$ for any $X,Y\in\cA$ and $\lambda\in[0,1]$ and a {\em cone} if $\lambda X\in\cA$ for any $X\in\cA$ and $\lambda\in[0,\infty)$. Moreover, we say that $\cA$ is {\em strictly convex}  whenever $\lambda X+(1-\lambda)Y\in\Int\cA$ for all $\lambda\in(0,1)$ and any distinct $X,Y\in\cA$. Clearly, every convex set containing zero is star-shaped. Similarly, every (not necessarily convex) cone is star-shaped. The smallest convex set containing $\cA$ is called the {\em convex hull} of $\cA$ and is denoted by $\co\cA$. The smallest cone containing $\cA$ is called the {\em conic hull} of $\cA$ and is denoted by $\cone\cA$.  The {\em affine hull} of $\cA$, denoted by $\aff\cA$, is the set of all linear combinations of the form $\sum_{i=1}^m\alpha_iX_i$ for $X_1,\dots,X_m\in\cA$ and $\alpha_1,\dots,\alpha_m\in\R$ summing up to $1$. We say that $\cA$ is {\em polyhedral} if it can be represented as a finite intersection of halfspaces, i.e.
\[
\cA = \bigcap_{i=1}^{m}\{X\in\cX \,; \ \varphi_i(X)\geq\alpha_i\}
\]
for suitable functionals $\varphi_1,\dots,\varphi_m\in\cX'$ and scalars $\alpha_1,\dots,\alpha_m\in\R$. In this case, we say that $\cA$ is {\em represented} by $\varphi_1,\dots,\varphi_m$; see also~\eqref{eq: dual representation polyhedral} below. Clearly, any polyhedral set is closed and convex.

\medskip

The {\em asymptotic cone} of $\cA$ is the closed cone defined by setting
\[
\cA^\infty := \bigcap_{\e>0}\Cl\{\lambda X \,; \ \lambda\in[0,\e], \ X\in\cA\}.
\]
Equivalently, $\cA^\infty$ consists of all the limits of sequences $(\lambda_nX_n)$ where $(\lambda_n)\subset[0,\infty)$ with $\lambda_n\to0$ and $(X_n)\subset\cA$. If $\cA$ is closed and either convex or star-shaped, the asymptotic cone of $\cA$ coincides with the (otherwise smaller) {\em recession cone} of $\cA$ defined by
\[
\rec\cA := \{X\in\cX \,; \ Y+\lambda X\in\cA, \ \forall Y\in\cA, \ \forall \lambda\in(0,\infty)\}.
\]
The {\em lineality space} of $\cA$ is the vector space defined by $\lin\cA := \cA^\infty\cap(-\cA^\infty)$.

\smallskip

We assume that $\cX$ is partially ordered by a reflexive, antisymmetric, transitive relation $\geq$. The corresponding {\em positive cone} is given by
\[
\cX_+ := \{X\in\cX \,; \ X\geq0\}.
\]
In this case, the dual space $\cX'$ can be also partially ordered by setting $\varphi\geq\psi$ if and only if $\varphi(X)\geq\psi(X)$ for all $X\in\cX_+$. The associated positive cone is
\[
\cX'_+ := \{\varphi\in\cX' \,; \ \varphi(X)\geq0, \ \forall X\in\cX_+\}.
\]
An element $X\in\cX_+$ is said to be {\em strictly positive} whenever $\varphi(X)>0$ for all nonzero $\varphi\in\cX'_+$. The set of strictly-positive elements is denoted by $\cX_{++}$. Similarly, a functional $\varphi\in\cX'_+$ is called {\em strictly positive} whenever $\varphi(X)>0$ for all nonzero $X\in\cX_+$.

\smallskip

The {\em (lower) support function} of a set $\cA\subset\cX$ is the map $\sigma_\cA:\cX'\to\R\cup\{-\infty\}$ defined by
\[
\sigma_\cA(\varphi) := \inf_{X\in\cA}\varphi(X).
\]
The effective domain of $\sigma_\cA$, which is easily seen to be a convex cone, is called the {\em barrier cone} of $\cA$ and is denoted by $\Barr\cA$. We say that $X\in\cA$ is a {\em support point} for $\cA$ if there exists a functional $\varphi\in\cX'$ satisfying $\varphi(X)=\sigma_\cA(\varphi)$. Every support point for $\cA$ automatically belongs to $\bd\cA$. Although boundary points need not be support points, this is always true whenever $\cA$ is polyhedral or $\dim(\cX)<\infty$.

\medskip

We collect in the following result a variety of useful properties of support functions and barrier cones; see Aliprantis and Border~(2006) and, for point~{\em (iii)}, Farkas et al.~(2014). The lemma, in particular, tells us that the barrier cone of a vector space, respectively a cone, coincides with its annihilator, respectively its (one-sided) polar.

\begin{lemma}
\label{prop: properties support functions}
The following statements hold for any subsets $\cA,\cB\subset\cX$:
\begin{enumerate}[(i)]
\item $\sigma_{\cA+\cB}(\psi)=\sigma_\cA(\psi)+\sigma_\cB(\psi)$ for every $\psi\in\cX'$.
\item $\Barr(\cA+\cB)=\Barr\cA\cap\Barr\cB$.
\item If $\cA+\cX_+\subset\cA$, then $\Barr\cA\subset\cX'_+$.
\item If $\cA$ is a vector space, then $\Barr\cA=\{\varphi\in\cX' \,; \ \varphi(X)=0, \ \forall X\in\cA\}$.
\item If $\cA$ is a cone, then $\Barr\cA=\{\varphi\in\cX' \,; \ \varphi(X)\geq0, \ \forall X\in\cA\}$.
\item If $\cA$ is closed and convex, then
\[
\cA = \bigcap_{\varphi\in\Barr\cA}\{X\in\cX \,; \ \varphi(X)\geq\sigma_\cA(\varphi)\}.
\]
\item If $\cA$ is polyhedral and is represented by $\varphi_1,\dots,\varphi_m\in\cX'$, then $\varphi_1,\dots,\varphi_m\in\Barr\cA$ and
\begin{equation}
\label{eq: dual representation polyhedral}
\cA = \bigcap_{i=1}^m\{X\in\cX \,; \ \varphi_i(X)\geq\sigma_\cA(\varphi_i)\}.
\end{equation}
\end{enumerate}
\end{lemma}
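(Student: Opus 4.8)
The plan is to treat the seven items in order of logical dependence, deriving each directly from the definitions $\sigma_\cA(\varphi)=\inf_{X\in\cA}\varphi(X)$ and $\Barr\cA=\{\varphi\in\cX' \,;\ \sigma_\cA(\varphi)>-\infty\}$. All of the work is elementary except for item (vi), whose reverse inclusion is the one place where I expect to invoke a genuine separation theorem; everything else reduces to manipulating infima, exploiting linearity of the functionals, and using the scaling structure of cones. I would assume throughout that the relevant sets are nonempty, so that the support functions never take the value $+\infty$, and I would record the dependencies (ii)$\Leftarrow$(i), (iv)$\Leftarrow$(v), and (vii)$\Leftarrow$ the definitions.

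For (i), since every $\psi\in\cX'$ is linear, $\psi(X+Y)=\psi(X)+\psi(Y)$, and the infimum of a sum over the independent indices $X\in\cA$, $Y\in\cB$ splits as $\inf_{X,Y}(\psi(X)+\psi(Y))=\inf_X\psi(X)+\inf_Y\psi(Y)$, with the usual convention $-\infty+c=-\infty$; this is exactly $\sigma_{\cA+\cB}(\psi)=\sigma_\cA(\psi)+\sigma_\cB(\psi)$. Item (ii) is then immediate: $\psi$ lies in $\Barr(\cA+\cB)$ iff $\sigma_\cA(\psi)+\sigma_\cB(\psi)>-\infty$, and a sum of two elements of $\R\cup\{-\infty\}$ is finite iff each summand is, i.e.\ iff $\psi\in\Barr\cA\cap\Barr\cB$. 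For (iii), I would suppose $\cA+\cX_+\subset\cA$ and take $\varphi\in\Barr\cA$; if some $X_0\in\cX_+$ had $\varphi(X_0)<0$, then for any fixed $A\in\cA$ the points $A+nX_0$ would remain in $\cA$ (using that $\cX_+$ is a cone, so $nX_0\in\cX_+$), while $\varphi(A+nX_0)=\varphi(A)+n\varphi(X_0)\to-\infty$, forcing $\sigma_\cA(\varphi)=-\infty$, a contradiction. Hence $\varphi(X)\geq0$ on $\cX_+$, i.e.\ $\varphi\in\cX'_+$.

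Items (iv) and (v) share the same ``escape along a ray'' idea. For a cone $\cA$ one has $0\in\cA$, so $\sigma_\cA(\varphi)\leq\varphi(0)=0$; if $\varphi(X)\geq0$ for all $X\in\cA$ then $\sigma_\cA(\varphi)=0>-\infty$, whereas if $\varphi(X_0)<0$ for some $X_0\in\cA$, then $\varphi(\lambda X_0)=\lambda\varphi(X_0)\to-\infty$ as $\lambda\to\infty$ along the ray $\lambda X_0\in\cA$, so $\varphi\notin\Barr\cA$; this gives (v). Item (iv) then follows by symmetry: a vector space is in particular a cone, and since $\cA=-\cA$ the condition $\varphi(X)\geq0$ for all $X\in\cA$ is equivalent to $\varphi(X)=0$ for all $X\in\cA$.

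The crux is (vi). The inclusion $\subset$ is trivial, since $X\in\cA$ gives $\varphi(X)\geq\sigma_\cA(\varphi)$ for every $\varphi\in\Barr\cA$ by definition of the infimum. For the reverse inclusion I would argue contrapositively: if $X_0\notin\cA$, then because $\cA$ is closed and convex and $\cX$ is locally convex and Hausdorff, the strong separation theorem \reference\ yields $\varphi\in\cX'$ and $\alpha\in\R$ with $\varphi(X_0)<\alpha\leq\inf_{Y\in\cA}\varphi(Y)=\sigma_\cA(\varphi)$; in particular $\sigma_\cA(\varphi)\geq\alpha>-\infty$, so $\varphi\in\Barr\cA$, and $\varphi(X_0)<\sigma_\cA(\varphi)$ witnesses that $X_0$ fails the defining inequality, excluding $X_0$ from the right-hand intersection. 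This is the only step needing Hahn--Banach, and producing a strict inequality that simultaneously certifies $\varphi\in\Barr\cA$ and excludes $X_0$ is the part to be careful about. Finally, (vii) needs no separation: given a representation $\cA=\bigcap_{i=1}^m\{X\in\cX \,;\ \varphi_i(X)\geq\alpha_i\}$, the inclusion $\cA\subset\{\varphi_i(X)\geq\alpha_i\}$ forces $\sigma_\cA(\varphi_i)\geq\alpha_i>-\infty$, so $\varphi_i\in\Barr\cA$; moreover $\alpha_i\leq\sigma_\cA(\varphi_i)<\infty$ shows $\{\varphi_i(X)\geq\sigma_\cA(\varphi_i)\}\subset\{\varphi_i(X)\geq\alpha_i\}$, and intersecting over $i$ together with the trivial inclusion $\cA\subset\bigcap_{i=1}^m\{\varphi_i(X)\geq\sigma_\cA(\varphi_i)\}$ pins down the equality~\eqref{eq: dual representation polyhedral}.
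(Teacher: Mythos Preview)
Your proof is correct. The paper does not actually prove this lemma; it merely cites Aliprantis and Border~(2006) for the general statements and Farkas et al.~(2014) for item~(iii), so there is no ``paper's own proof'' to compare against. Your arguments are exactly the standard ones those references contain: splitting infima for (i)--(ii), the ray-escape trick for (iii)--(v), Hahn--Banach separation for the nontrivial inclusion in (vi), and the sandwich $\cA\subset\bigcap_i\{\varphi_i\geq\sigma_\cA(\varphi_i)\}\subset\bigcap_i\{\varphi_i\geq\alpha_i\}=\cA$ for (vii). Nothing is missing.
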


\subsubsection*{The space of eligible payoffs}

The eligible payoffs are assumed to belong to a vector subspace $\cM\subset\cX$ with $1<\dim(\cM)<\infty$. We equip $\cM$ with the relative topology inherited from $\cX$. Since $\cM$ has finite dimension, the relative topology is always normable. In what follows we denote by $\|\cdot\|$ a fixed norm on $\cM$ and use the notation
\[
d(\cS_1,\cS_2) := \inf\{\|W-Z\| \,; \ W\in\cS_1, \ Z\in\cS_2\}
\]
for any subsets $\cS_1,\cS_2\subset\cM$. In addition, we write
\[
\cB_r(Z):=\{W\in\cM \,; \ \|W-Z\|\leq r\}
\]
for any $Z\in\cM$ and $r\in(0,\infty)$.

\medskip

Prices of eligible payoffs are represented by a linear functional $\pi:\cM\to\R$. Note that, being linear, $\pi$ is automatically continuous. The kernel of $\pi$ is denoted by
\[
\ker(\pi) := \{Z\in\cM \,; \ \pi(Z)=0\}.
\]
We assume throughout that $\cM$ contains a positive payoff with strictly-positive price (which, by linearity, can be always normalized to $1$).

\begin{assumption1}
We assume that there exists $U\in\cM\cap\cX_+$ such that $\pi(U)=1$.
\end{assumption1}

The above assumption is automatically satisfied if $\cM$ contains a nonzero positive payoff and the market is free of arbitrage opportunities so that $\pi$ is strictly positive.

\subsubsection*{The acceptance set}

The acceptance set is represented by a subset $\cA\subset\cX$ for which we stipulate the following assumptions.

\begin{assumption2}
We assume that $\cA$ is strictly contained in $\cX$ and satisfies:
\begin{enumerate}[(1)]
  \item $\cA$ is closed and contains zero.
  \item $\cA+\cX_+\subset\cA$.
\end{enumerate}
\end{assumption2}

\smallskip

The above properties are widely recognized as the minimal properties an acceptance set should satisfy. Property (1) is satisfied by all relevant acceptance sets in theory and practice. Property (2) is referred to as {\em monotonicity} and is equivalent to stipulating that any position that dominates an acceptable position should also be deemed acceptable.

\medskip

Note that we do not require a priori $\cA$ to be convex. The reason is that, in spite of its appealing interpretation in terms of diversification benefits, convexity is not satisfied by some relevant acceptance sets used in practice such as those based on Value at Risk. However, we need convexity of $\cA$ to establish some of our results. Within the class of convex acceptance sets, polyhedral sets and convex cones (sometimes called {\em coherent acceptance sets}) are particularly tractable.

\smallskip

\medskip

\begin{example}[{\bf Univariate acceptability}]
Assume $\cX$ is a standard space of random variables. For simplicity, we take $\cX=L^\infty(\Omega,\cF,\probp)$ and we consider the canonical almost-sure ordering on $\cX$.

\smallskip

(1) The {\em Value at Risk} (VaR) of a position $X\in\cX$ at the level $\alpha\in(0,1)$ is defined by
\[
\VaR_\alpha(X) := \inf\{m\in\R \,; \ \probp(X+m<0)\leq\alpha\}.
\]
Hence, up to a sign, $\VaR_\alpha(X)$ coincides with the upper $\alpha$-quantile of $X$. The corresponding acceptance set is the closed cone given by
\[
\cA = \{X\in\cX \,; \ \VaR_\alpha(X)\leq0\} = \{X\in\cX \,; \ \probp(X<0)\leq\alpha\}.
\]
In this case, acceptability boils down to checking whether the probability of default or loss of a certain position does not exceed the threshold $\alpha$. Note that $\cA$ is typically not convex. Acceptance sets based on VaR have historically been at the core of the Basel Accords, the reference regulatory regime for the banking sector, and of Solvency II, the regulatory regime for insurance companies within the European Union.

\smallskip

(2) The {\em Expected Shortfall} (ES) of $X\in\cX$ at the level $\alpha\in(0,1)$ is defined by
\[
\ES_\alpha(X) := \frac{1}{\alpha}\int_0^\alpha\VaR_\beta(X)d\beta.
\]
The corresponding acceptance set is the closed convex cone given by
\[
\cA = \{X\in\cX \,; \ \ES_\alpha(X)\leq0\}.
\]
As is well-known, $\cA$ is polyhedral whenever $\Omega$ is finite.
To be acceptable under ES, a financial institution needs to be solvent on average over the tail beyond the upper $\alpha$-quantile. Currently, ES is the basis for the Swiss Solvency Test, the regulatory framework for insurance companies in Switzerland, and is set to be the risk metric for market risk in the forthcoming Basel Accord.

\smallskip

(3) The acceptance set based on a nonempty set of {\em test scenarios} $E\in\cF$ is the closed convex cone
\[
\cA = \{X\in\cX \,; \ X\one_E\geq0\}.
\]
Here, we have denoted by $\one_E$ the indicator function of the event $E$. In this case, acceptability reduces to requiring solvency in each of the chosen test scenarios. Note that $\cA$ coincides with the positive cone of $\cX$ in the case that $E=\Omega$. Note also that $\cA$ is polyhedral whenever $\Omega$ is finite. The methodology used by many central exchanges and clearing counterparties to set up margin requirements, most notably the Standard Portfolio ANalysis of Risk adopted by the Chicago Mercantile Exchange, is based on test scenarios.

\smallskip

(4) The acceptance set based on a family $\cQ$ of probability measures on $(\Omega,\cF)$ that are absolutely continuous with respect to $\probp$ is the closed convex set defined by
\[
\cA = \bigcap_{\probq\in\cQ}\{X\in\cX \,; \ \E_\probq[X]\geq\alpha_\probq\}
\]
for suitable $\alpha_\probq\in(-\infty,0]$. The elements of $\cQ$ are often called {\em generalized scenarios}. Note that $\cA$ is polyhedral whenever $\cQ$ is finite and conic provided that $\alpha_\probq=0$ for all $\probq\in\cQ$. This type of acceptance sets is often encountered in the literature on good deal pricing or pricing with acceptable risk.

\smallskip

(5) The acceptance set based on an increasing and concave {\em utility function} $u:\R\to\R$ is given by
\[
\cA = \{X\in\cX \,; \ \E[u(X)]\geq\alpha\}
\]
for a suitable $\alpha\in(-\infty,u(0)]$. Note that $\cA$ is strictly convex whenever $u$ is strictly concave in the sense that $u(\lambda x+(1-\lambda)y)>\lambda u(x)+(1-\lambda)u(y)$ for all distinct $x,y\in\R$ and $\lambda\in(0,1)$. Acceptance sets based on expected utility are common in the literature on good deal pricing or pricing with acceptable risk.\hfill$\qed$
\end{example}

\smallskip

\begin{example}[{\bf Multivariate acceptability}]
Assume $\cX$ is a standard space of random vectors. For simplicity, we take $\cX=L_d^\infty(\Omega,\cF,\probp)$ for some $d\in\N$ and we consider the canonical componentwise almost-sure ordering on $\cX$.

\smallskip

(1) We speak of {\em stand-alone acceptability} whenever $\cA$ has the form
\[
\cA = \cC_1\times\cdots\times\cC_d
\]
where each component is an acceptance set in $L^\infty(\Omega,\cF,\probp)$. According to $\cA$, the system is adequately capitalized precisely when every entity is adequately capitalized on a stand-alone basis according to the corresponding univariate acceptance sets.

\smallskip

(2) The acceptance set based on an {\em aggregation function} $\Lambda:\R^d\to\R$ is defined by
\[
\cA = \{X\in\cX \,; \ \Lambda(X)\in\cC\}
\]
where $\cC$ is a given acceptance set in $L^\infty(\Omega,\cF,\probp)$. The function $\Lambda$ summarizes the system and its internal interdependencies in one figure that is tested against a univariate acceptance set.\hfill$\qed$
\end{example}

\medskip

In the framework of good deal pricing it is customary to assume that the market admits no {\em good deals}, i.e.~no payoff $Z\in\cA\cap\cM$ such that $\pi(Z)\leq0$. This is equivalent to requiring that every acceptable eligible payoff must have a strictly-positive price. In view of Assumptions 1-2, the absence of good deals is equivalent to
\[
\cA\cap\ker(\pi) = \{0\}.
\]
Sometimes, one is interested in ruling out those special good deals $Z\in\cM$ such that $\lambda Z\in\cA$ for all $\lambda>0$, which correspond to payoffs that can be purchased at zero or even negative cost and that are acceptable regardless of their size. Inspired by the terminology introduced in Pennanen~(2011), we refer to such payoffs as {\em scalable good deals}. In view of Assumptions 1-2, the absence of scalable good deals is equivalent to
\[
\cA^\infty\cap\ker(\pi) = \{0\}.
\]
The above conditions are important to ensure some results on existence and stability of optimal payoffs.

\subsubsection*{The risk measure}

The {\em risk measure} associated with the acceptance set $\cA$, the space of eligible payoffs $\cM$, and the pricing functional $\pi$ is the map $\risk:\cX\to\overline{\R}$ defined by setting
\[
\risk(X) := \inf\{\pi(Z) \,; \ Z\in\cM, \ X+Z\in\cA\}.
\]

\smallskip

For our study of existence and stability of optimal portfolios of eligible assets it is crucial to assume that $\rho$ take only finite values and be continuous; see Remark~\ref{rem: assumption continuity} for more details.

\begin{assumption3}
We assume that $\rho$ is finitely valued and continuous.
\end{assumption3}

\medskip

A variety of sufficient conditions for finiteness and continuity, which we record here for ease of reference, are provided in Farkas et al.~(2015). In particular, note that $\rho$ cannot be finitely valued (in fact, we would have $\risk\equiv-\infty$) unless
\[
\cA+\ker(\pi)\neq\cX.
\]
This condition was called {\em absence of acceptability arbitrage} in Farkas et al.~(2015) and says that it is not possible to make every position acceptable at zero cost.

\begin{proposition}[\refmultiple]
\label{prop: finiteness continuity rho}
Assume $\cA+\ker(\pi)\neq\cX$. Then, $\rho$ is finitely valued and continuous if any of the following conditions hold:
\begin{enumerate}[(i)]
  \item $\Int\cX_+\cap\cM\neq\emptyset$.
  \item $\cA$ is convex and $\cX_{++}\cap\cM\neq\emptyset$.
  \item $\cA$ is a convex cone and $\Int\cA\cap\cM\neq\emptyset$.
\end{enumerate}
\end{proposition}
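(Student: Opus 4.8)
The plan is to reduce the multi-asset problem to a single-asset one and then establish separately the two bounds $\risk>-\infty$ and $\risk<+\infty$ together with continuity. First I would invoke the reduction lemma \refreductionlemma\ to rewrite
\[
\risk(X)=\inf\{m\in\R \,; \ X+mU\in\cB\}, \qquad \cB:=\cA+\ker(\pi),
\]
so that $\risk$ becomes the single-asset risk measure built on the eligible asset $U$ and the enlarged acceptance set $\cB$. The set $\cB$ inherits the key features of $\cA$: it contains $0$ and $U$, it is monotone ($\cB+\cX_+\subset\cB$, since $\cA+\cX_+\subset\cA$), and the standing hypothesis $\cA+\ker(\pi)\neq\cX$ is precisely $\cB\neq\cX$. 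From this representation $\risk$ is decreasing and $U$-translation covariant, i.e.\ $\risk(X+\lambda U)=\risk(X)-\lambda$; moreover $\risk$ is convex whenever $\cA$ is convex and sublinear whenever $\cA$ is a convex cone. These structural facts are the backbone of everything that follows.

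Next I would prove $\risk<+\infty$ everywhere, i.e.\ that every position can be made acceptable by an eligible payoff; this is the step where the three hypotheses enter differently. In case (i) I pick $V\in\Int\cX_+\cap\cM$ and note $\cX_+\subset\cA$ (as $0\in\cA$ and $\cA+\cX_+\subset\cA$); because $V$ is interior, $\tfrac{1}{\lambda}X+V\in\cX_+$ for $\lambda$ large, hence $X+\lambda V\in\cX_+\subset\cA$ and $\risk(X)\le\lambda\pi(V)<\infty$. In case (iii) I pick $V\in\Int\cA\cap\cM$ and run the same interior argument with $\cA$ in place of $\cX_+$, the cone property giving $X+\lambda V\in\cA$ for large $\lambda$. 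Case (ii) is the delicate one: $\risk(X)=+\infty$ means $(X+\cM)\cap\cA=\emptyset$, and I would separate these two disjoint convex sets by a functional $\varphi$; being bounded on the affine set $X+\cM$, the functional $\varphi$ annihilates $\cM$, and since $\varphi\in\Barr\cA\subset\cX'_+$ by Lemma~\ref{prop: properties support functions}(iii) with $\varphi\neq0$, strict positivity of $V\in\cX_{++}\cap\cM$ forces the contradiction $0=\varphi(V)>0$.

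For $\risk>-\infty$ I would first exploit $\cB\neq\cX$: choosing $Y_0\notin\cB$, if $\risk(Y_0)<0$ then $Y_0+mU\in\cB$ for some $m<0$, whence $Y_0=(Y_0+mU)+(-m)U\in\cB+\cX_+\subset\cB$, a contradiction; thus $\risk(Y_0)\ge0$ is finite at one point. Continuity and the propagation of finiteness from below then split along convexity. In case (i) the order interval $[-V,V]$ is a neighborhood of $0$, so its Minkowski gauge $p$ is a continuous seminorm; combining monotonicity with $V$-covariance yields $|\risk(X)-\risk(Y)|\le\pi(V)\,p(X-Y)$ (with $\pi(V)>0$, as $\pi(V)\le0$ would make $V$ a scalable good deal and force $\cB=\cX$), so $\risk$ is Lipschitz, hence continuous, and finiteness at $Y_0$ spreads to all of $\cX$. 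In cases (ii) and (iii) $\risk$ is convex and finite from above everywhere, so extending the segment through $Y_0$ slightly beyond it shows that a single finite value at $Y_0$ rules out the value $-\infty$ anywhere; continuity then follows from the classical fact that a real-valued convex function bounded above on a neighborhood of one point is continuous, the required local bound being $\risk\le0$ on a neighborhood of an interior point of $\cA$.

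The main obstacle is case (ii). There $\cA$ need not have nonempty interior, so the clean ``bounded above on a neighborhood'' route to continuity is not directly available, and the separation used for finiteness from above is genuinely delicate: disjoint closed convex sets in an infinite-dimensional space need not be properly separated, so one must exploit that $X+\cM$ is finite-dimensional together with the closedness of $\cA$ and the strict positivity of $V$ — in practice via an asymptotic-cone/no-scalable-good-deal argument ensuring that the relevant sums (such as $\cA+\cM$) are closed. Securing a proper separating functional and a neighborhood on which $\risk$ is bounded above in this non-interior convex setting is where the real work concentrates; cases (i) and (iii) are comparatively routine once the reduction and the elementary properties are in hand.
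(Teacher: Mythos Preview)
The paper does not contain its own proof of this proposition; it is stated with the citation \refmultiple\ and no argument is given. So there is no in-paper proof to compare against, and what one can assess is whether your plan matches the route taken in the cited reference.

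Your reduction via \refreductionlemma\ to the single-asset problem on the enlarged set $\cB=\cA+\ker(\pi)$ is precisely the device used there, and your handling of cases (i) and (iii) is correct and essentially identical to the standard arguments: the absorption argument through an interior point of $\cX_+$ (resp.\ $\cA$) gives $\risk<+\infty$; the order-unit Lipschitz estimate in (i), respectively the local upper bound $\risk\le 0$ on $\Int\cA$ combined with convexity in (iii), gives continuity; and your ``one finite value plus convexity'' trick correctly propagates $\risk>-\infty$ in the convex cases. The side remark that $\pi(V)>0$ in case (i) because otherwise $\cB=\cX$ is also right.

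Your self-diagnosis of case (ii) is accurate, and both gaps you name are real. The separation of $X+\cM$ from $\cA$ requires $X\notin\Cl(\cA+\cM)$, not merely $X\notin\cA+\cM$, and closedness of such sums is exactly the nontrivial issue treated later in the paper (Proposition~\ref{prop: existence solutions}); so Hahn--Banach cannot be invoked without further work. Likewise, continuity via ``bounded above on a neighborhood'' stalls if $\Int\cA=\emptyset$. In the cited reference the resolution goes through strict positivity directly: one shows that a strictly positive $V\in\cX_{++}$ must lie in the core (indeed the interior) of the closed convex monotone set $\cA$, since any supporting functional at $V$ would be a nonzero $\varphi\in\cX'_+$ with $\varphi(V)\le 0$, contradicting $V\in\cX_{++}$. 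This simultaneously yields $\Int\cA\neq\emptyset$ (hence the local upper bound for continuity) and, via the interior-point absorption argument you already used in (i) and (iii), finiteness from above --- so the delicate separation you were worried about is in fact bypassed. Your roadmap is therefore sound; the missing ingredient is this ``strictly positive $\Rightarrow$ interior'' step, after which case (ii) collapses to case (iii).
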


\smallskip

\begin{remark}
\label{rem: no acceptability arbitrage}
The above conditions require the space of eligible payoffs to contain payoffs that are ``sufficiently risk-free''; see Farkas et al.~(2015) for more details. Note that condition {\em (i)} requires the positive cone $\cX_+$ to have nonempty interior. This is always the case if $\dim(\cX)<\infty$ but typically breaks down, with the exception of spaces of bounded random variables, in an infinite-dimensional setting.\hfill$\qed$
\end{remark}


\section{The optimal payoff map}

The {\em optimal payoff map} associated with the risk measure $\rho$ is the set-valued map $\Risk:\cX\rightrightarrows\cM$ defined by
\[
\Risk(X) := \{Z\in\cM \,; \ X+Z\in\cA, \ \pi(Z)=\risk(X)\}.
\]

\smallskip

Every eligible payoff in $\Risk(X)$ is called an {\em optimal payoff} for the position $X$. Note that, as mentioned above, there may exist positions $X$ such that $\Risk(X)$ is empty (even though $\risk(X)$ is finite).

\medskip

The next proposition lists some useful properties of the optimal payoff map that are used throughout the paper. To that effect, we first need to recall some basic properties of the risk measure $\risk$.

\begin{lemma}[\refreductionlemma]
\label{lem: properties rho}
For any $X,Y\in\cX$ the risk measure $\risk$ satisfies:
\begin{enumerate}[(i)]
  \item $X\geq Y$ implies $\risk(X)\leq\risk(Y)$.
  \item $\risk(X+Z)=\risk(X)-\pi(Z)$ for every $Z\in\cM$.
  \item $\risk(X)=\inf\{m\in\R \,; \ X+mU\in\cA+\ker(\pi)\}$.
  \item $\{X\in\cX \,; \ \rho(X)<0\}=\Interior(\cA+\ker(\pi))$.
  \item $\{X\in\cX \,; \ \rho(X)\leq0\}=\Cl(\cA+\ker(\pi))$.
  \item $\{X\in\cX \,; \ \rho(X)=0\}=\bd(\cA+\ker(\pi))$.
\end{enumerate}
\end{lemma}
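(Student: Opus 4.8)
Statements (i)--(iii) I would dispatch straight from the definitions. For (i): if $X\geq Y$ then $X-Y\in\cX_+$, so any $Z\in\cM$ feasible for $Y$ (that is, $Y+Z\in\cA$) is also feasible for $X$, because $X+Z=(Y+Z)+(X-Y)\in\cA+\cX_+\subset\cA$ by Assumption~2; the feasible set for $X$ thus contains that for $Y$, and the infimum defining $\risk$ can only decrease. For (ii): substituting $W=W'-Z$ in the definition of $\risk(X+Z)$ and using that $\cM$ is a subspace and $\pi$ is linear immediately gives $\risk(X+Z)=\risk(X)-\pi(Z)$. For (iii): Assumption~1 provides $U$ with $\pi(U)=1$, so $\cM=\ker(\pi)\oplus\R U$; writing $Z=K+mU$ with $K\in\ker(\pi)$ and $m=\pi(Z)$, the constraint $X+Z\in\cA$ reads $X+mU\in\cA-K$, and letting $K$ run through the subspace $\ker(\pi)$ converts the infimum of $m$ into $\inf\{m\in\R\,;\ X+mU\in\cA+\ker(\pi)\}$.

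The real work is in (iv)--(vi), and here the plan rests on one structural observation. Writing $\cG:=\cA+\ker(\pi)$, I note that since $U\in\cX_+$ (Assumption~1) and $\cG+\cX_+\subset\cG$ (Assumption~2), the set $\cG$ is monotone along $U$: $X\in\cG$ and $t\geq0$ force $X+tU\in\cG$. Hence for every $X$ the set $S(X):=\{m\in\R\,;\ X+mU\in\cG\}$ is an upper half-line, and by (iii) its infimum is $\risk(X)$, so that $(\risk(X),\infty)\subset S(X)$. From this I would read off three facts: if $\risk(X)<0$, picking $m<0$ with $X+mU\in\cG$ yields $X=(X+mU)+(-m)U\in\cG+\cX_+\subset\cG$, so $\{\risk<0\}\subset\cG$; if $\risk(X)\leq0$, then $X+mU\in\cG$ for every $m>0$, and letting $m\downarrow0$ gives $X\in\Cl\cG$, so $\{\risk\leq0\}\subset\Cl\cG$; and conversely $X\in\cG$ means $0\in S(X)$, hence $\risk(X)\leq0$, so $\cG\subset\{\risk\leq0\}$.

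I would then bring in Assumption~3: continuity of $\risk$ makes $\{\risk<0\}$ open and $\{\risk\leq0\}$ closed. Combined with the above, $\{\risk<0\}$ is an open subset of $\cG$, hence contained in $\Int\cG$, and $\{\risk\leq0\}$ is a closed set containing $\cG$, hence containing $\Cl\cG$. The two reverse inclusions I would argue directly: for $X\in\Int\cG$ one has $X-tU\in\cG$ for all small $t>0$ (as $X-tU\to X$), so $\risk(X-tU)\leq0$, and then (ii) gives $\risk(X)=\risk(X-tU)-t\leq-t<0$, proving $\Int\cG\subset\{\risk<0\}$. This establishes (iv) and (v); statement (vi) then follows formally from $\bd\cG=\Cl\cG\setminus\Int\cG$ and $\{\risk=0\}=\{\risk\leq0\}\setminus\{\risk<0\}$.

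I expect the main obstacle to be the matching of topological operations on $\cG$ with sublevel sets of $\risk$ in (iv)--(vi). The delicate point is that $\cG=\cA+\ker(\pi)$ need not be closed even though $\cA$ is, so these identities cannot be inherited from $\cA$; they rely essentially on the $U$-monotonicity of $\cG$ (used to slide points into $\cG$, or onto its closure, along the $U$-axis) together with the openness and closedness of the sublevel sets supplied by Assumption~3. The only genuinely non-formal step is extracting the \emph{strict} inequality $\risk(X)<0$ for interior points, which requires the downward perturbation $X-tU$ and the cash-additivity relation (ii).
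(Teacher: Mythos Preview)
Your argument is correct. Note, however, that the paper does not supply its own proof of this lemma: it is stated with a citation to \cite[Lemma~2,~3]{FarkasKochMunari2015} and no proof is given in the present paper. So there is no ``paper's proof'' to compare against; your write-up is a valid self-contained verification of the cited result.

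A couple of minor remarks on presentation. In your argument for (iv)--(vi) you implicitly use that $S(X)$ is nonempty with finite infimum for every $X$; this is guaranteed by Assumption~3 (finiteness of $\risk$), which you do invoke later but could flag at this point as well. Also, in the step ``$\risk(X)=\risk(X-tU)-t$'' you are applying (ii) with $Z=tU$ to the point $X-tU$, which is exactly right; it may read more smoothly if you state it as $\risk(X-tU)=\risk(X)+t$ directly. These are cosmetic; the logic is sound throughout, and your identification of the two non-formal ingredients---$U$-monotonicity of $\cG=\cA+\ker(\pi)$ and the continuity of $\risk$ from Assumption~3---is exactly what drives (iv)--(vi).
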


\smallskip

\begin{proposition}
\label{prop: reformulation Risk}
For every $X\in\cX$ the following statements hold:
\begin{enumerate}[(i)]
  \item $\Risk(X)=\{Z\in\cM \,; \ X+Z\in\bd\cA\cap\bd(\cA+\ker(\pi))\}$.
  \item $\Risk(X+Z)=\Risk(X)-Z$ for every $Z\in\cM$.
  \item $\Risk(\cK)$ is closed for every compact set $\cK\subset\cX$.
  \item $\Risk(X)$ is convex whenever $\cA$ is convex.
  \item $\Risk(X)$ is polyhedral (in $\cM$) whenever $\cA$ is polyhedral.
  \item $\Risk(X)^\infty\subset\cA^\infty\cap\ker(\pi)$.
  \item $\Risk(X)^\infty=\cA^\infty\cap\ker(\pi)$ if $\cA$ is star-shaped and $\Risk(X)\neq\emptyset$.
\end{enumerate}
\end{proposition}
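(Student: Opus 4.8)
The plan is to handle the seven statements in turn, leaning throughout on the reformulations of $\risk$ collected in Lemma~\ref{lem: properties rho} and on the elementary facts about asymptotic and recession cones recorded in Section~2. For (i) I would prove both inclusions. If $Z\in\Risk(X)$, then $X+Z\in\cA$ and, by Lemma~\ref{lem: properties rho}(ii), $\risk(X+Z)=\risk(X)-\pi(Z)=0$, so $X+Z\in\bd(\cA+\ker(\pi))$ by Lemma~\ref{lem: properties rho}(vi). To see $X+Z\in\bd\cA$, suppose instead $X+Z\in\Int\cA$; using the eligible asset $U$ with $\pi(U)=1$ and continuity of $\e\mapsto X+Z-\e U$, we get $X+Z-\e U\in\cA$ for small $\e>0$, whence $\risk(X)\leq\pi(Z-\e U)=\risk(X)-\e<\risk(X)$, a contradiction; since $\cA$ is closed this forces $X+Z\in\cA\setminus\Int\cA=\bd\cA$. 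Conversely, membership in $\bd\cA\cap\bd(\cA+\ker(\pi))$ gives $X+Z\in\cA$ and $\risk(X+Z)=0$, hence $\pi(Z)=\risk(X)$, i.e. $Z\in\Risk(X)$. Statement (ii) is a direct substitution: replacing $W$ by $W-Z$ and using $\risk(X+Z)=\risk(X)-\pi(Z)$ matches the defining constraints of $\Risk(X+Z)$ with those of $\Risk(X)-Z$.

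For (iii) I would take a sequence $(Z_n)\subset\Risk(\cK)$ with $Z_n\to Z$ in $\cM$ and points $X_n\in\cK$ with $Z_n\in\Risk(X_n)$. Since $\cX$ is first countable, the compact set $\cK$ is sequentially compact, so along a subsequence $X_n\to X\in\cK$; then $X_n+Z_n\to X+Z\in\cA$ by closedness of $\cA$, while $\pi(Z_n)=\risk(X_n)\to\risk(X)$ by continuity of $\risk$ (Assumption 3) and of $\pi$, giving $\pi(Z)=\risk(X)$ and hence $Z\in\Risk(X)\subset\Risk(\cK)$. Statements (iv) and (v) are structural. Convexity holds because $X+Z$ is affine in $Z$ and $\pi$ is linear, so both defining conditions survive convex combinations once $\cA$ is convex. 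Polyhedrality follows by writing $\cA=\bigcap_i\{W\in\cX \,; \ \varphi_i(W)\geq\sigma_\cA(\varphi_i)\}$ and noting that $\Risk(X)$ is cut out in $\cM$ by the finitely many inequalities $\varphi_i(Z)\geq\sigma_\cA(\varphi_i)-\varphi_i(X)$ together with the single affine equality $\pi(Z)=\risk(X)$; restricting the $\varphi_i$ and $\pi$ to $\cM$ exhibits $\Risk(X)$ as a finite intersection of halfspaces in $\cM$.

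The asymptotic-cone statements (vi) and (vii) demand the most care. For (vi), a generic element of $\Risk(X)^\infty$ is a limit $V=\lim\lambda_nZ_n$ with $\lambda_n\to0$ in $[0,\infty)$ and $Z_n\in\Risk(X)$; since $\cM$ is finite dimensional and hence closed, $V\in\cM$, and $\pi(V)=\lim\lambda_n\risk(X)=0$ places $V\in\ker(\pi)$, while $\lambda_n(X+Z_n)=\lambda_nX+\lambda_nZ_n\to V$ with $X+Z_n\in\cA$ places $V\in\cA^\infty$. For (vii) the inclusion $\subset$ is exactly (vi); for the reverse I would fix some $Z_0\in\Risk(X)$ (available since $\Risk(X)\neq\emptyset$) and any $V\in\cA^\infty\cap\ker(\pi)$. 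Because $\cA$ is closed and star-shaped, $\cA^\infty$ coincides with the recession cone $\rec\cA$, so $(X+Z_0)+tV\in\cA$ for every $t\geq0$; combined with $\pi(Z_0+tV)=\risk(X)$, which holds since $\pi(V)=0$, this shows the entire ray $\{Z_0+tV \,; \ t\geq0\}$ lies in $\Risk(X)$, whence $V=\lim_{n}\tfrac1n(Z_0+nV)\in\Risk(X)^\infty$.

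The step I expect to be the genuine obstacle is the reverse inclusion in (vii): everything rests on upgrading $\cA^\infty$ to the recession cone $\rec\cA$, which is precisely what the closedness-plus-star-shapedness hypothesis supplies, and on checking that the optimal ray actually stays inside $\Risk(X)$. The price constraint is immediate from $V\in\ker(\pi)$, but the acceptability constraint uses the recession property applied at the specific acceptable point $X+Z_0$, and this is where star-shapedness (rather than mere membership of $V$ in the asymptotic cone) is doing real work. A secondary subtlety is the sequential-compactness argument in (iii), which exploits first countability of $\cX$ rather than metrizability, and the routine but necessary observation in (vi) that limits of sequences in $\cM$ remain in $\cM$.
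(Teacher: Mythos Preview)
Your proof is correct and follows essentially the same route as the paper: the contradiction with $U$ for (i), the sequential-compactness extraction for (iii), the direct convex/polyhedral rewriting for (iv)--(v), and the asymptotic-cone/recession-cone manipulations for (vi)--(vii) all match the paper's arguments step for step. The only cosmetic differences are that the paper invokes the boundary characterization (i) to streamline (ii) and (iii), whereas you work directly from the defining constraints via continuity of $\risk$ and $\pi$; both are equally valid.
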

\begin{proof}
{\em (i)} Since $\cA$ is closed and $X+Z\in\bd(\cA+\ker(\pi))$ is equivalent to $\rho(X)=\pi(Z)$ for all $X\in\cX$ and $Z\in\cM$ by Lemma~\ref{lem: properties rho}, the assertion is established once we show that
\begin{equation}
\label{eq: reformulation Risk}
\Risk(X) \subset \{Z\in\cM \,; \ X+Z\in\bd\cA\}.
\end{equation}
To this effect, take any $Z\in\Risk(X)$ and note that, by definition, we have $X+Z\in\cA$ and $\risk(X)=\pi(Z)$. Should $X+Z\in\Interior\cA$ hold, we would find a suitable $\e>0$ such that $X+Z-\e U\in\cA$. However, this would imply that
\[
\risk(X) \leq \pi(Z)-\e\pi(U) < \risk(X).
\]
Hence, we must have $X+Z\in\bd\cA$ and this concludes the proof of~\eqref{eq: reformulation Risk}.

\smallskip

{\em (ii)} For any $Z\in\cM$ it follows from Proposition~\ref{prop: reformulation Risk} that
\[
\Risk(X+Z) = \bd\cA\cap\bd(\cA+\ker(\pi))-(X+Z) = (\bd\cA\cap\bd(\cA+\ker(\pi))-X)-Z = \Risk(X)-Z.
\]

\smallskip

{\em (iii)} Assume that $\cK\subset\cX$ is compact and consider a sequence $(Z_n)\subset\Risk(\cK)$ converging to some $Z\in\cM$ (recall that $\cM$ is closed). For any $n\in\N$ we find $X_n\in\cK$ such that $Z_n\in\Risk(X_n)$. Since $\cK$ is compact, a suitable subsequence $(X_{n_k})$ converges to some $X\in\cK$. Note that $X_{n_k}+Z_{n_k}\in\bd\cA\cap\bd(\cA+\ker(\pi))$ for all $k\in\N$ by virtue of Proposition~\ref{prop: reformulation Risk}. Note also that $Z_{n_k}\to Z$. As a result, we infer that $X+Z\in\bd\cA\cap\bd(\cA+\ker(\pi))$, which implies $Z\in\Risk(X)$ again by Proposition~\ref{prop: reformulation Risk}. This yields $Z\in\Risk(\cK)$ and concludes the proof.

\smallskip

{\em (iv)} Assume that $\cA$ is convex and take any $X\in\cX$. Then, for every $Z,W\in\Risk(X)$ and for every $\lambda\in[0,1]$ we clearly have
\[
X+\lambda Z+(1-\lambda)W = \lambda(X+Z)+(1-\lambda)(X+W) \in \cA
\]
as well as
\[
\pi(\lambda Z+(1-\lambda)W) = \lambda\risk(X)+(1-\lambda)\risk(X) = \risk(X).
\]
This shows that $\Risk(X)$ is convex.

\smallskip

{\em (v)} Assume $\cA$ is polyhedral so that we find suitable functionals $\varphi_1,\dots,\varphi_m\in\cX'_+$ satisfying
\[
\cA = \bigcap_{i=1}^{m}\{X\in\cX \,; \ \varphi_i(X)\geq\sigma_\cA(\varphi_i)\}.
\]
In this case, we easily see that
\begin{equation}
\label{eq: polyhedral structure Risk 1}
\{Z\in\cM \,; \ X+Z\in\cA\} = \bigcap_{i=1}^{m}\{Z\in\cM \,; \ \varphi_i(Z)\geq\sigma_\cA(\varphi_i)-\varphi_i(X)\}.
\end{equation}
Moreover, note that
\begin{equation}
\label{eq: polyhedral structure Risk 2}
\{Z\in\cM \,; \ \pi(Z)=\risk(X)\} = \{Z\in\cM \,; \ \pi(Z)\geq\risk(X)\}\cap\{Z\in\cM \,; \ -\pi(Z)\geq-\risk(X)\}.
\end{equation}
This shows that $\Risk(X)$ can be expressed as the intersection of two polyhedral sets in $\cM$ and is thus also polyhedral in $\cM$.

\smallskip

{\em (vi)} Take any $Z\in\Risk(X)^\infty$ so that $\lambda_n Z_n\to Z$ for a suitable sequence $(\lambda_n)\subset\R_+$ converging to zero and for $(Z_n)\subset\Risk(X)$. Since $\lambda_n(X+Z_n)\to Z$ and $X+Z_n\in\cA$ for every $n\in\N$, we see that $Z\in\cA^\infty$. Moreover, note that $Z$ belongs to $\cM$ (since $\cM$ is closed) and satisfies
\[
\pi(Z) = \lim_{n\to\infty}\lambda_n\pi(Z_n) = \lim_{n\to\infty}\lambda_n\risk(X) = 0
\]
by the continuity of $\pi$, so that $Z\in\ker(\pi)$. This proves that $\Risk(X)^\infty\subset\cA^\infty\cap\ker(\pi)$.

\smallskip

{\em (vii)} Recall that, if $\cA$ is star-shaped, we have $\cA^\infty=\rec\cA$. Moreover, recall that asymptotic cones always contain the corresponding recession cones. Hence, in light of point~{\em (v)}, the claim is established once we prove that
\begin{equation}
\label{eq: asymptotic cone Risk}
\rec\cA\cap\ker(\pi) \subset \rec\Risk(X).
\end{equation}
To this effect, take any $Z\in\rec\cA\cap\ker(\pi)$ and $W\in\Risk(X)$, which exists since $\Risk(X)$ is assumed to be nonempty. We claim that, for every $\lambda\in(0,\infty)$, we have $W+\lambda Z\in\Risk(X)$. To show this, note first that $X+W+\lambda Z\in\cA$. This follows from the fact that $Z\in\rec\cA$ and $X+W\in\cA$. Moreover, it is clear that
\[
\pi(W+\lambda Z) = \pi(W) = \risk(X).
\]
This shows that $W+\lambda Z\in\Risk(X)$ for every $\lambda\in(0,\infty)$ and establishes~\eqref{eq: asymptotic cone Risk}.
\end{proof}

\smallskip

\begin{remark}
We show that the assumptions in point~{\em (vii)} above are all necessary. Let $\cX=\R^2$ and $\cM=\cX$ and define the pricing functional $\pi$ by setting $\pi(X)=\frac{1}{2}(X_1+X_2)$ for all $X\in\cX$.
\begin{enumerate}[(i)]
  \item {\em $\cA$ is star-shaped but $\Risk(X)$ is empty}. Consider the acceptance set $\cA=\cA_1\cup\cA_2$ where
\[
\cA_1 = \{X\in\cX \,; \ X_1\in[0,\infty), \ X_2\in[-1,\infty)\}
\]
and
\[
\cA_2 = \{X\in\cX \,; \ X_1\in(-\infty,0), \ X_2\geq e^{X_1}-X_1-2\}.
\]

\smallskip

Note that $\cA$ is star-shaped. It is easy to verify that our assumptions (A1) to (A3) are all satisfied in this setting. Moreover, we have
\[
\cA+\ker(\pi) = \{X\in\cX \,; \ X_2>-X_1-2\}.
\]
Since $\bd\cA$ and $\bd(\cA+\ker(\pi))$ have empty intersection, it follows from Proposition~\ref{prop: reformulation Risk} that $\Risk(X)$ is empty for every $X\in\cX$. However, $\cA^\infty\cap\ker(\pi)$ contains infinitely many elements.
  \item {\em $\Risk(X)$ is nonempty but $\cA$ is not star-shaped}. Set $\alpha_n=-n+\frac{1}{n}$ for every $n\in\N$ and consider the acceptance set
\[
\cA = \cX_+\cup\bigcup_{n\in\N}\{X\in\cX \,; \ X_1\in[\alpha_{n+1},\alpha_n), \ X_2\in[n+1,\infty)\}.
\]
Note that $\cA$ is not star-shaped and that our assumptions (A1) to (A3) are all satisfied in this setting. Moreover, it is easy to verify that
\[
\cA+\ker(\pi) = \{X\in\cX \,; \ X_2\geq-X_1\}.
\]
Since $\Risk(0)=\{0\}$, we also have $\Risk(0)^\infty=\{0\}$. However, $\cA^\infty\cap\ker(\pi)$ is easily seen to contain infinitely many elements.\hfill$\qed$
\end{enumerate}
\end{remark}


\section{Existence and uniqueness of optimal payoffs}
\label{sec: existence uniqueness optimal payoffs}

This section is devoted to investigating under which conditions we can ensure existence and uniqueness of optimal eligible payoffs.


\subsubsection*{Existence of optimal payoffs}

We start by providing a general characterization of the global existence of optimal payoffs.

\begin{proposition}
\label{prop: existence solutions}
The following statements are equivalent:
\begin{enumerate}[(a)]
  \item $\Risk(X)\neq\emptyset$ for every $X\in\cX$.
  \item $\Risk(X)\neq\emptyset$ for every $X\in\bd(\cA+\ker(\pi))$.
  \item $\cA+\ker(\pi)$ is closed.
\end{enumerate}
\end{proposition}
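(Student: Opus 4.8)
The plan is to establish the cycle of implications (a) $\Rightarrow$ (b) $\Rightarrow$ (c) $\Rightarrow$ (a). The implication (a) $\Rightarrow$ (b) is immediate, since (b) merely restricts the requirement in (a) to positions lying in $\bd(\cA+\ker(\pi))$. The substance lies in the other two implications, and both will be driven by the level-set descriptions of $\risk$ collected in Lemma~\ref{lem: properties rho}, particularly the identities $\Cl(\cA+\ker(\pi))=\{X \,; \ \risk(X)\leq0\}$, $\Interior(\cA+\ker(\pi))=\{X \,; \ \risk(X)<0\}$, and $\bd(\cA+\ker(\pi))=\{X \,; \ \risk(X)=0\}$.

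For (c) $\Rightarrow$ (a), I would fix $X\in\cX$ and exploit the scalar reformulation $\risk(X)=\inf\{m\in\R \,; \ X+mU\in\cA+\ker(\pi)\}$ from Lemma~\ref{lem: properties rho}(iii), where $U\in\cM\cap\cX_+$ with $\pi(U)=1$ is the payoff guaranteed by Assumption~1. Since $\cA+\cX_+\subset\cA$, the set $\{m \,; \ X+mU\in\cA+\ker(\pi)\}$ is upward closed, hence a half-line bounded below by the finite number $\risk(X)$ (finiteness being guaranteed by Assumption~3). Choosing $m_n\downarrow\risk(X)$ with each $X+m_nU\in\cA+\ker(\pi)$ and invoking the closedness hypothesis yields $X+\risk(X)U\in\cA+\ker(\pi)$. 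Writing this element as $A+K$ with $A\in\cA$ and $K\in\ker(\pi)$, the payoff $Z:=\risk(X)U-K\in\cM$ satisfies $X+Z=A\in\cA$ and $\pi(Z)=\risk(X)$, so $Z\in\Risk(X)$ and existence follows.

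For (b) $\Rightarrow$ (c), I would show that $\cA+\ker(\pi)$ coincides with its closure. The inclusion $\cA+\ker(\pi)\subset\Cl(\cA+\ker(\pi))$ is automatic, so the task is to prove $\Cl(\cA+\ker(\pi))\subset\cA+\ker(\pi)$. Take $X$ with $\risk(X)\leq0$. If $\risk(X)<0$ then $X\in\Interior(\cA+\ker(\pi))\subset\cA+\ker(\pi)$ by Lemma~\ref{lem: properties rho}(iv). If $\risk(X)=0$, then $X\in\bd(\cA+\ker(\pi))$ and hypothesis~(b) supplies some $Z\in\Risk(X)$; since $\pi(Z)=\risk(X)=0$ we have $Z\in\ker(\pi)$, and from $X+Z\in\cA$ we obtain $X=(X+Z)+(-Z)\in\cA+\ker(\pi)$, using that $\ker(\pi)$ is a subspace. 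This settles the reverse inclusion and hence closedness.

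The main obstacle, and the only place where a genuine limiting argument is needed, is the attainment step in (c) $\Rightarrow$ (a): one must convert the infimum defining $\risk(X)$ into an attained minimum without any compactness in the infinite-dimensional space $\cX$. The key simplification is to reduce the optimization over the multidimensional $\cM$ to a one-dimensional search along the single direction $U$ via Lemma~\ref{lem: properties rho}(iii), so that closedness of $\cA+\ker(\pi)$ does all the work. I would double-check that upward-closedness of the scalar set is correctly derived from monotonicity of $\cA$ together with $U\in\cX_+$, and that the representative $K\in\ker(\pi)$ indeed lies in $\cM$ so that the candidate $Z$ is a legitimate eligible payoff.
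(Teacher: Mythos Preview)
Your proof is correct and follows essentially the same route as the paper's. The only cosmetic differences are that the paper argues (b)~$\Rightarrow$~(c) by contradiction (taking $X\in\bd(\cA+\ker(\pi))\setminus(\cA+\ker(\pi))$ directly) rather than your direct inclusion argument, and for (c)~$\Rightarrow$~(a) the paper reaches $X+\risk(X)U\in\cA+\ker(\pi)$ via the translation identity $\risk(X+\risk(X)U)=0$ and Lemma~\ref{lem: properties rho}(v) rather than via your explicit limit along $m_n\downarrow\risk(X)$; both arrive at the same key step and finish identically.
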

\begin{proof}
It is clear that {\em (a)} implies {\em (b)}. Assume now that {\em (b)} holds but $\cA+\ker(\pi)$ is not closed, so that we find $X\in\bd(\cA+\ker(\pi))\setminus(\cA+\ker(\pi))$. This implies that $\Risk(X)\neq\emptyset$ and $\risk(X)=0$ by Lemma~\ref{lem: properties rho} but, at the same time, that there cannot exist $Z\in\ker(\pi)$ with $X+Z\in\cA$. Since this is not possible, we conclude that {\em (c)} must hold.

\smallskip

Finally, assume that {\em (c)} holds and take an arbitrary $X\in\cX$. Note that $\risk(X+\risk(X)U)=0$ and thus Lemma~\ref{lem: properties rho} implies that $X+\risk(X)U\in\cA+\ker(\pi)$. As a result, we find $Z\in\ker(\pi)$ such that $X+\risk(X)U+Z\in\cA$ and $\pi(\risk(X)U+Z)=\risk(X)$, proving that $\risk(X)U+Z\in\Risk(X)$. This shows that {\em (a)} holds and concludes the proof of the equivalence.
\end{proof}

\medskip

The preceding result shows that existence of optimal payoffs is equivalent to the ``augmented'' acceptance set $\cA+\ker(\pi)$ being closed. This set has a clear financial interpretation in that it consists of all the positions that can be made acceptable at zero cost, i.e.
\[
\cA+\ker(\pi) = \{X\in\cX \,; \ \mbox{$X+Z\in\cA$ for some $Z\in\ker(\pi)$}\}.
\]
In particular, when $\cA=\cX_+$, the above set is easily seen to coincide, up to a sign, with the set of positions that can be superreplicated at zero cost. Establishing the closedness of $\cA+\ker(\pi)$ is a recurrent theme in mathematical finance. In fact, this is a special case of a classical problem in functional analysis asking when the sum of two closed sets is still closed. As first remarked in Dieudonn\'{e}~(1966), the notion of asymptotic cone plays an important role to establish closedness.

\begin{proposition}
\label{prop: existence solutions asymptotic cone}
Assume the market admits no scalable good deals, i.e.~$\cA^\infty\cap\ker(\pi)=\{0\}$. Then, we have $\Risk(X)\neq\emptyset$ for every $X\in\cX$.
\end{proposition}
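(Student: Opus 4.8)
The plan is to reduce the claim to the closedness of the augmented acceptance set $\cA+\ker(\pi)$ and then use the no-scalable-good-deals hypothesis to rule out the ``escape to infinity'' that could destroy closedness. By Proposition~\ref{prop: existence solutions}, the property that $\Risk(X)\neq\emptyset$ for every $X\in\cX$ is equivalent to $\cA+\ker(\pi)$ being closed, so it suffices to prove the latter. To this end, I would fix a sequence $(X_n)\subset\cA+\ker(\pi)$ converging to some $X\in\cX$ and write $X_n=A_n+K_n$ with $A_n\in\cA$ and $K_n\in\ker(\pi)$, the goal being to show $X\in\cA+\ker(\pi)$.

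The crucial structural fact I would exploit is that $\ker(\pi)$ is a subspace of the finite-dimensional space $\cM$ and is therefore itself finite dimensional; consequently its closed bounded sets are compact, even though $\cX$ may be infinite dimensional. I would then split into two cases according to the behavior of $(K_n)$. If $(K_n)$ admits a bounded subsequence, finite-dimensionality yields a further subsequence $K_{n_j}\to K\in\ker(\pi)$; then $A_{n_j}=X_{n_j}-K_{n_j}\to X-K$, and since $\cA$ is closed we obtain $X-K\in\cA$, whence $X=(X-K)+K\in\cA+\ker(\pi)$, as desired.

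The delicate case is when $\|K_n\|\to\infty$. Here I would set $\lambda_n:=1/\|K_n\|\to0$, so that $\lambda_nK_n$ lies on the unit sphere of $\ker(\pi)$; compactness of that sphere produces a subsequence with $\lambda_{n_j}K_{n_j}\to K$ where $\|K\|=1$, in particular $K\neq0$ and $K\in\ker(\pi)$. Using joint continuity of scalar multiplication and of addition, together with $X_n\to X$ and $\lambda_n\to0$ (so that $\lambda_nX_n\to0$), I get $\lambda_{n_j}A_{n_j}=\lambda_{n_j}X_{n_j}-\lambda_{n_j}K_{n_j}\to -K$. Since $A_{n_j}\in\cA$ and $\lambda_{n_j}\to0$, the sequential characterization of the asymptotic cone stated in Section~2 shows that $-K\in\cA^\infty$; and since $\ker(\pi)$ is a subspace, also $-K\in\ker(\pi)$. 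Thus $-K$ is a nonzero element of $\cA^\infty\cap\ker(\pi)$, contradicting the assumption that there are no scalable good deals. This rules out the unbounded case and completes the argument.

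I expect the main obstacle to be precisely this unbounded case, and the point requiring care is the legitimacy of the limit extraction: passing to a convergent subsequence of $(\lambda_nK_n)$ is possible only because $\ker(\pi)$ is finite dimensional, and identifying the limit $-K$ as an element of $\cA^\infty$ relies on the description of $\cA^\infty$ as the set of limits of sequences $(\lambda_nX_n)$ with $\lambda_n\to0$ and $X_n\in\cA$. Note that convexity of $\cA$ is never used, which is consistent with working only under Assumption~2.
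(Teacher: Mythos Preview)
Your proof is correct. The overall strategy coincides with the paper's: both reduce the claim, via Proposition~\ref{prop: existence solutions}, to the closedness of $\cA+\ker(\pi)$, and both use the finite dimensionality of $\ker(\pi)$ together with the hypothesis $\cA^\infty\cap\ker(\pi)=\{0\}$ to obtain this closedness. The difference lies in the execution of the closedness step. The paper observes that $\ker(\pi)$, being finite dimensional, is \emph{asymptotically compact} in the sense of Barbu and Precupanu~(2012), and then invokes their Corollary~1.61, which asserts that the sum of two closed sets is closed whenever their asymptotic cones meet only at the origin and one of them is asymptotically compact. You instead unpack this black box and give the standard direct sequence argument: decompose $X_n=A_n+K_n$, and either extract a convergent subsequence of $(K_n)$ by compactness in $\ker(\pi)$, or, if $\|K_n\|\to\infty$, rescale to produce a nonzero limit in $\cA^\infty\cap\ker(\pi)$, contradicting the hypothesis. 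Your route is more elementary and self-contained, and it makes explicit where the two ingredients (finite dimensionality of $\ker(\pi)$ and the sequential description of $\cA^\infty$) enter; the paper's route is terser but delegates the core compactness argument to an external reference.
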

\begin{proof}
Since $\ker(\pi)^\infty=\ker(\pi)$, it follows from the absence of scalable good deals that $\cA^\infty\cap\ker(\pi)^\infty=\{0\}$. Moreover, being the subset of a finite-dimensional vector space, $\ker(\pi)$ is easily seen to be asymptotically compact in the sense of Barbu and Precupanu~(2012). Indeed, we can always find $\varepsilon>0$ and a neighborhood of zero $\cU\subset\cX$ such that $\Cl(\{\lambda X \,; \ \lambda\in[0,\varepsilon], \ X\in\ker(\pi)\}\cap\cU)$ is compact. As a result, we can apply Corollary~1.61 in Barbu and Precupanu~(2012) to conclude that $\cA+\ker(\pi)$ is closed. Proposition~\ref{prop: existence solutions} now yields that $\Risk(X)\neq\emptyset$ for every $X\in\cX$.
\end{proof}

\smallskip

\begin{remark}
Optimal payoffs are called {\em risk allocations} in the setting of multivariate shortfall risk measures studied by Armenti et al.~(2016). The main existence result is their Theorem~3.6, which provides a sufficient condition for the existence of risk allocations under a suitable assumption on the underlying multivariate loss function. This assumption is easily seen to be equivalent to the absence of scalable good deals.\hfill$\qed$
\end{remark}

\medskip

We apply the preceding proposition to star-shaped and polyhedral acceptance sets. First, we show that optimal payoffs always exist if the underlying acceptance set is star-shaped and the market does not admit good deals. Due to Assumption 2, this result applies to any convex or conic acceptance set.

\begin{corollary}
\label{cor: existence solutions star shaped}
Assume $\cA$ is star-shaped and the market admits no good deals, i.e.~$\cA\cap\ker(\pi)=\{0\}$. Then, we have $\Risk(X)\neq\emptyset$ for every $X\in\cX$.
\end{corollary}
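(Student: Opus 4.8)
The plan is to reduce the hypotheses of this corollary to those of Proposition~\ref{prop: existence solutions asymptotic cone}: it suffices to show that, under the stated assumptions, the absence of good deals ($\cA\cap\ker(\pi)=\{0\}$) forces the absence of scalable good deals ($\cA^\infty\cap\ker(\pi)=\{0\}$). Once this implication is in place, the conclusion $\Risk(X)\neq\emptyset$ for every $X\in\cX$ follows immediately by invoking Proposition~\ref{prop: existence solutions asymptotic cone}.

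The first step is to exploit the fact that $\cA$ is closed (by Assumption 2) and star-shaped. Under these conditions the discussion in Section~2 tells us that the asymptotic cone coincides with the recession cone, so $\cA^\infty=\rec\cA$. The second step is to observe that this recession cone is contained in $\cA$ whenever $0\in\cA$: if $Z\in\rec\cA$, then by definition $Y+\lambda Z\in\cA$ for every $Y\in\cA$ and $\lambda\in(0,\infty)$; choosing $Y=0$ (which lies in $\cA$ by Assumption 2) and $\lambda=1$ yields $Z\in\cA$. Combining these two observations gives $\cA^\infty=\rec\cA\subset\cA$, and intersecting with $\ker(\pi)$ produces
\[
\cA^\infty\cap\ker(\pi) \subset \cA\cap\ker(\pi) = \{0\}.
\]

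Since the reverse inclusion is trivial ($0$ belongs both to every asymptotic cone and to $\ker(\pi)$), we conclude that $\cA^\infty\cap\ker(\pi)=\{0\}$, so that the market admits no scalable good deals and Proposition~\ref{prop: existence solutions asymptotic cone} applies. There is no serious obstacle here: the only point requiring care is the identification $\cA^\infty=\rec\cA$, which relies on both the closedness and the star-shapedness of $\cA$ guaranteed by our standing assumptions, together with the elementary containment $\rec\cA\subset\cA$ that uses $0\in\cA$. The final remark that the result covers all convex or conic acceptance sets is then justified by the observations, recorded in Section~2, that every convex set containing zero is star-shaped and that every cone is star-shaped.
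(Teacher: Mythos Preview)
Your proof is correct and follows essentially the same approach as the paper: both arguments reduce to showing $\cA^\infty\subset\cA$ (the paper asserts this directly for closed star-shaped sets, while you derive it via the identification $\cA^\infty=\rec\cA$ and the inclusion $\rec\cA\subset\cA$ using $0\in\cA$), and then apply Proposition~\ref{prop: existence solutions asymptotic cone}.
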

\begin{proof}
It is not difficult to verify that any star-shaped set that is closed contains its asymptotic cone. As a result, the assertion follows at once from Proposition~\ref{prop: existence solutions asymptotic cone}.
\end{proof}

\medskip

Next, we show that every position admits optimal payoffs whenever the underlying acceptance set is polyhedral. In this case, we need not require the absence of (scalable) good deals.

\begin{corollary}
\label{cor: existence for polyhedral}
Assume $\cA$ is polyhedral. Then, we have $\Risk(X)\neq\emptyset$ for every $X\in\cX$.
\end{corollary}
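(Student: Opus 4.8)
The plan is to reduce, via Proposition~\ref{prop: existence solutions}, the existence claim to showing that the augmented acceptance set $\cA+\ker(\pi)$ is closed. Note that one cannot simply invoke Proposition~\ref{prop: existence solutions asymptotic cone} here: for a polyhedral $\cA$ the recession cone is $\cA^\infty=\{X\in\cX \,; \ \varphi_i(X)\geq0,\ i=1,\dots,m\}$, so the intersection $\cA^\infty\cap\ker(\pi)$ may well be nontrivial and the market may admit scalable good deals. I would instead exploit the polyhedral structure directly to establish closedness of the sum.

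First I would encode $\cA$ through its representing functionals. By Lemma~\ref{prop: properties support functions}(vii) we may write $\cA=\bigcap_{i=1}^m\{X\in\cX \,; \ \varphi_i(X)\geq\alpha_i\}$ with $\varphi_i\in\cX'$ and $\alpha_i=\sigma_\cA(\varphi_i)\in\R$. Collecting these into the continuous linear map $\Phi:=(\varphi_1,\dots,\varphi_m):\cX\to\R^m$ gives $\cA=\Phi^{-1}(P)$, where $P:=\prod_{i=1}^m[\alpha_i,\infty)$ is a polyhedron in $\R^m$. The key algebraic identity I would then verify is
\[
\cA+\ker(\pi) = \Phi^{-1}\bigl(P+\Phi(\ker(\pi))\bigr).
\]
The inclusion ``$\subseteq$'' is immediate from the linearity of $\Phi$; for ``$\supseteq$'', if $\Phi(X)=p+\Phi(Z)$ with $p\in P$ and $Z\in\ker(\pi)$, then $\Phi(X-Z)=p\in P$, so $X-Z\in\cA$ and hence $X\in\cA+\ker(\pi)$.

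It then remains to argue closedness of the right-hand side. Since $\ker(\pi)$ is a vector subspace and $\Phi$ is linear, $V:=\Phi(\ker(\pi))$ is a linear subspace of the finite-dimensional space $\R^m$. Thus $P+V$ is the sum of a polyhedron and a subspace in $\R^m$; by the finite-dimensional theory of polyhedra (e.g.\ Minkowski--Weyl: writing $V=\cone\{\pm b_1,\dots,\pm b_k\}$ for a basis of $V$ and absorbing these generators into the recession part of $P$, or equivalently by Fourier--Motzkin elimination) the set $P+V$ is again polyhedral, in particular closed in $\R^m$. Because $\Phi$ is continuous, $\cA+\ker(\pi)=\Phi^{-1}(P+V)$ is closed in $\cX$, and Proposition~\ref{prop: existence solutions} yields $\Risk(X)\neq\emptyset$ for every $X\in\cX$. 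The only genuine obstacle is the finite-dimensional closedness of $P+V$; everything else is bookkeeping, and the real conceptual point is that pushing the problem into the finite-dimensional image under $\Phi$ is precisely what allows polyhedrality to compensate for the possible presence of scalable good deals that would otherwise block the asymptotic-cone route.
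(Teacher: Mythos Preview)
Your proof is correct and takes a genuinely different route from the paper. Both approaches reduce to showing that $\cA+\ker(\pi)$ is closed (in fact polyhedral) and then invoke Proposition~\ref{prop: existence solutions}. The paper works directly in $\cX$: it reduces by induction to the case $\dim(\ker(\pi))=1$, then performs a case analysis according to whether $\cA^\infty\cap\ker(\pi)$ is trivial or not, and in each case constructs by hand an explicit list of functionals representing $\cA+\ker(\pi)$ as a polyhedral set. You instead push the problem into the finite-dimensional target $\R^m$ via $\Phi=(\varphi_1,\dots,\varphi_m)$, verify the identity $\cA+\ker(\pi)=\Phi^{-1}\bigl(P+\Phi(\ker(\pi))\bigr)$, and appeal to the standard fact that in $\R^m$ the sum of a polyhedron and a linear subspace is again polyhedral, hence closed. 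Your argument is shorter and more conceptual: it isolates precisely why polyhedrality bypasses the asymptotic-cone condition of Proposition~\ref{prop: existence solutions asymptotic cone}, namely that the entire obstruction lives in a finite-dimensional image where closedness of polyhedral sums is automatic. The paper's approach, by contrast, is fully self-contained (no appeal to Minkowski--Weyl or Fourier--Motzkin) and yields the representing functionals of $\cA+\ker(\pi)$ explicitly.
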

\begin{proof}
The assertion follows from Proposition~\ref{prop: existence solutions} once we prove that $\cA+\ker(\pi)$ is closed. In fact, we show that $\cA+\ker(\pi)$ is even polyhedral. This is clear if $\dim(\cX)<\infty$ since, being a finite-dimensional space, $\ker(\pi)$ is polyhedral in a finite-dimensional setting and polyhedrality is preserved under addition. Hence, assume that $\dim(\cX)=\infty$ and note that we may suppose without loss of generality that $\dim(\ker(\pi))=1$ (otherwise proceed by a simple finite induction argument). We also assume that $\cA$ is represented by $\varphi_1,\dots,\varphi_m\in\cX'_+$. In view of Lemma~\ref{prop: properties support functions}, this means that
\[
\cA = \bigcap_{i=1}^m\{X\in\cX \,; \ \varphi_i(X)\geq\sigma_\cA(\varphi_i)\}.
\]
In this case, we readily have
\[
\cA^\infty = \bigcap_{i=1}^m\{X\in\cX \,; \ \varphi_i(X)\geq0\}.
\]
First, suppose that $\cA^\infty\cap\ker(\pi)\neq\{0\}$ and take a nonzero $Z\in\cA^\infty\cap\ker(\pi)$. Note that $Z\in\bd(\cA^\infty)$, for otherwise every $X\in\cX$ would admit $\lambda>0$ satisfying $\lambda X+Z\in\cA^\infty$, so that $\cX\subset\cA^\infty+\ker(\pi)$ (in contrast to the finiteness of $\risk$, see the discussion before Proposition~\ref{prop: finiteness continuity rho}). Since $Z$ is a boundary point of $\cA^\infty$, one can split $\{1,\dots,m\}$ into two subsets $I_0\neq\emptyset$ and $I_+$ such that
\[
\mbox{$\varphi_i(Z)=0$ for $i\in I_0$} \ \ \ \mbox{and} \ \ \ \mbox{$\varphi_i(Z)>0$ for $i\in I_+$}.
\]
In this case, we claim that
\[
\cA+\ker(\pi) = \bigcap_{i\in I_0}\{X\in\cX \,; \ \varphi_i(X)\geq\sigma_\cA(\varphi_i)\}.
\]
The inclusion ``$\subset$'' is clear. To show the converse inclusion, assume that $X\in\cX$ satisfies $\varphi_i(X)\geq\sigma_\cA(\varphi_i)$ for all $i\in I_0$ and note that we may always choose $\lambda>0$ large enough to satisfy
\[
\varphi_i(X+\lambda Z)=
\begin{cases}
\varphi_i(X)\geq\sigma_\cA(\varphi_i) & \mbox{if} \ i\in I_0,\\
\varphi_i(X)+\lambda\varphi_i(Z)\geq\sigma_\cA(\varphi_i) & \mbox{if} \ i\in I_+,
\end{cases}
\]
so that $X\in\cA+\ker(\pi)$. This proves the inclusion ``$\supset$'' and shows that $\cA+\ker(\pi)$ is polyhedral.

\smallskip

Now, suppose that $\cA^\infty\cap\ker(\pi)=\{0\}$. Take any nonzero $Z\in\ker(\pi)$ and note that $\varphi_j(Z)>0>\varphi_k(Z)$ for suitable $j,k\in\{1,\dots,m\}$ (otherwise either $Z$ or $-Z$ would need to belong to $\cA^\infty$). Hence, we may split $\{1,\dots,m\}$ into three subsets $I_0$, $I_+\neq\emptyset$ and $I_-\neq\emptyset$ such that
\[
\mbox{$\varphi_i(Z)=0$ for $i\in I_0$}, \ \ \ \mbox{$\varphi_i(Z)>0$ for $i\in I_+$}, \ \ \ \mbox{$\varphi_i(Z)<0$ for $i\in I_-$}.
\]
For any $j\in I_+$ and $k\in I_-$ define $\lambda_{jk}=-\tfrac{\varphi_k(Z)}{\varphi_j(Z)}>0$ so that the functional $\varphi_{jk}=\lambda_{jk}\varphi_j+\varphi_k$ belongs to $\ker(\pi)^\perp$. We claim that
\[
\cA+\ker(\pi) = \bigcap_{i\in I_0}\{X\in\cX \,; \ \varphi_i(X)\geq\sigma_\cA(\varphi_i)\}\cap
\bigcap_{j\in I_+,\,k\in I_-}\{X\in\cX \,; \ \varphi_{jk}(X)\geq\sigma_\cA(\varphi_{jk})\}.
\]
The inclusion ``$\subset$'' is clear. To prove the converse inclusion, take $X\in\cX$ and assume it belongs to the above finite intersection. We have to exhibit $\lambda\in\R$ such that $X+\lambda Z\in\cA$, or equivalently $\varphi_i(X+\lambda Z)\geq\sigma_\cA(\varphi_i)$ for all $i\in\{1,\dots,m\}$. To this effect, set
\[
\lambda = \max_{i\in I_+}\frac{\sigma_\cA(\varphi_i)-\varphi_i(X)}{\varphi_i(Z)}.
\]
Then, it is not difficult to verify that
\[
\varphi_i(X+\lambda Z)=
\begin{cases}
\varphi_i(X)\geq\sigma_\cA(\varphi_i) & \mbox{if} \ i\in I_0,\\
\varphi_i(X)+\lambda\varphi_i(Z)\geq\varphi_i(X)+\tfrac{\sigma_\cA(\varphi_i)-\varphi_i(X)}{\varphi_i(Z)}\varphi_i(Z)=\sigma_\cA(\varphi_i) & \mbox{if} \ i\in I_+,\\
\varphi_{ji}(X)-\lambda_{ji}\varphi_j(X+\lambda Z)\geq \lambda_{ji}\sigma_\cA(\varphi_j)+\sigma_\cA(\varphi_i)-\lambda_{ji}\sigma_\cA(\varphi_j)=\sigma_\cA(\varphi_i) & \mbox{if} \ i\in I_-,
\end{cases}
\]
where $j\in I_+$ satisfies $\lambda=\tfrac{\sigma_\cA(\varphi_j)-\varphi_j(X)}{\varphi_j(Z)}$ (note that $\varphi_j(X+\lambda Z)=\sigma_\cA(\varphi_j)$). This concludes the proof of the inclusion ``$\supset$'' and shows that $\cA+\ker(\pi)$ is polyhedral.
\end{proof}


\subsubsection*{Uniqueness of optimal payoffs}

After characterizing the existence of optimal payoffs, the next natural question is under which conditions we can ensure uniqueness. We start by showing a general characterization of uniqueness, which is a simple consequence of the definition of our optimal payoff map. Here, for $X\in\cX$ we denote by $|\Risk(X)|$ the cardinality of the set $\Risk(X)$.

\begin{proposition}
\label{prop: characterization uniqueness}
Assume $\Risk(X)\neq\emptyset$ for every $X\in\cX$. Then, the following statements are equivalent:
\begin{enumerate}[(a)]
  \item $|\Risk(X)|=1$ for every $X\in\cX$.
  \item $|\Risk(X)|=1$ for every $X\in\bd\cA\cap\bd\left(\cA+\ker(\pi)\right)$.
  \item $\bd\cA\cap\bd\left(\cA+\ker(\pi)\right)\cap\left(\bd\cA+(\ker(\pi)\setminus\{0\})\right) =\emptyset$.
  \item $\bd\cA\cap\left(\bd\cA+(\ker(\pi)\setminus\{0\})\right)\subset\Int(\cA+\ker(\pi))$.
\end{enumerate}
\end{proposition}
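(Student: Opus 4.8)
The plan is to prove $(a)\Leftrightarrow(b)\Leftrightarrow(c)\Leftrightarrow(d)$ using three tools throughout: the reformulation $\Risk(X)=\{Z\in\cM : X+Z\in B\}$ with $B:=\bd\cA\cap\bd(\cA+\ker(\pi))$ from Proposition~\ref{prop: reformulation Risk}(i), the translation rule $\Risk(X+Z)=\Risk(X)-Z$ from part~(ii), and the level-set identities $\Int(\cA+\ker(\pi))=\{\risk<0\}$, $\bd(\cA+\ker(\pi))=\{\risk=0\}$, and $\Cl(\cA+\ker(\pi))=\{\risk\le0\}$ from Lemma~\ref{lem: properties rho}. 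For $(a)\Leftrightarrow(b)$ only the reverse implication requires an argument: given any $X\in\cX$, I would pick some $Z_0\in\Risk(X)$ (nonempty by hypothesis), note that the reformulation gives $Y:=X+Z_0\in B$, and use $|\Risk(X)|=|\Risk(Y)+Z_0|=|\Risk(Y)|$ to conclude that $(b)$ forces $|\Risk(X)|=1$.

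The central step is $(b)\Leftrightarrow(c)$. For every $X\in B$ one has $0\in\Risk(X)$, so $|\Risk(X)|=1$ fails precisely when $\Risk(X)$ contains a nonzero element. The key observation is that any $Z\in\Risk(X)$ must satisfy $\pi(Z)=0$: from $X+Z\in\bd(\cA+\ker(\pi))$ one gets $\risk(X+Z)=0=\risk(X)$, and $\risk(X+Z)=\risk(X)-\pi(Z)$ then forces $Z\in\ker(\pi)$; conversely, shifting by any $Z\in\ker(\pi)$ keeps $X+Z$ on $\bd(\cA+\ker(\pi))$ automatically, so membership in $B$ reduces to the single requirement $X+Z\in\bd\cA$. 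Hence the failure of $(b)$ is exactly the existence of $X\in B$ and $Z\in\ker(\pi)\setminus\{0\}$ with $X+Z\in\bd\cA$; setting $Y:=X+Z$ produces a point of $\bd\cA\cap\bd(\cA+\ker(\pi))\cap(\bd\cA+(\ker(\pi)\setminus\{0\}))$, and the construction reverses (from a point $Y$ of that triple intersection one recovers $X:=Y-Z\in B$ with $X+Z\in\bd\cA$), which is precisely the negation of $(c)$.

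For $(c)\Leftrightarrow(d)$ I would exploit the inclusion $\bd\cA\subset\cA\subset\Cl(\cA+\ker(\pi))=\{\risk\le0\}$. Consequently the set $\bd\cA\cap(\bd\cA+(\ker(\pi)\setminus\{0\}))$ on the left of $(d)$ lies automatically in $\{\risk\le0\}=\Int(\cA+\ker(\pi))\cup\bd(\cA+\ker(\pi))$, so it is contained in $\Int(\cA+\ker(\pi))$ if and only if it avoids $\bd(\cA+\ker(\pi))$; that avoidance is exactly the emptiness asserted in $(c)$.

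I expect the main obstacle to be the bookkeeping in $(b)\Leftrightarrow(c)$. The conceptual content is that price minimality forces any two optimal payoffs at a boundary position to differ by a kernel direction, so multiplicity is governed entirely by $\ker(\pi)$-translations that keep the position in $\bd\cA$, membership in $\bd(\cA+\ker(\pi))$ being automatic. Once this is isolated, $(c)\Leftrightarrow(d)$ is a routine splitting of $\{\risk\le0\}$ into its interior and boundary parts.
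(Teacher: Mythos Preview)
Your proof is correct and follows essentially the same approach as the paper's. The only organizational difference is that the paper proves the cycle $(a)\Rightarrow(b)\Rightarrow(c)\Rightarrow(d)\Rightarrow(a)$, whereas you prove the three pairwise equivalences; your $(b)\Rightarrow(a)$ via the translation rule and your explicit $(c)\Leftrightarrow(d)$ argument using $\bd\cA\subset\{\risk\le0\}$ are slightly more detailed than the paper's treatment, but the underlying ideas are identical.
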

\begin{proof}
It is clear that {\em (a)} implies {\em (b)}. Now, assume that {\em (b)} holds but we find $X\in\bd\cA\cap\bd(\cA+\ker(\pi))$ and $Z\in\ker(\pi)\setminus\{0\}$ such that $X+Z\in\bd\cA$. Since $\risk(X)=0$ by Lemma~\ref{lem: properties rho}, we see that $\Risk(X)$ contains both the null payoff $0\in\cM$ and the nonzero payoff $Z\in\cM$ so that $|\Risk(X)|\ge 2$. Since this contradicts {\em (b)}, we conclude that {\em (b)} must imply {\em (c)}.

\smallskip

It is immediate to verify that {\em (c)} implies {\em (d)}. Finally, assume that condition {\em (d)} is satisfied but there exist $Z_1,Z_2\in\Risk(X)$ with $Z_1\neq Z_2$ for some $X\in\cX$. In particular, note that $Z_2-Z_1\in\ker(\pi)\setminus\{0\}$. Since Proposition~\ref{prop: reformulation Risk} implies that $X+Z_1\in\bd\cA\cap\bd(\cA+\ker(\pi))$, it follows that
\[
X+Z_2 = X+Z_1+Z_2-Z_1 \in \left((\bd\cA\cap\bd(\cA+\ker(\pi)))+\ker(\pi)\setminus\{0\}\right)\cap\bd\cA.
\]
However, this is incompatible with condition {\em (d)}, showing that {\em (d)} implies {\em (a)}.
\end{proof}

\medskip

We provide sufficient conditions for uniqueness in the case of a polyhedral acceptance set. To this effect, we freely use the dual representation of polyhedral sets recorded in Lemma~\ref{prop: properties support functions}.

\begin{proposition}
Assume $\cA$ is polyhedral and is represented by $\varphi_1,\dots,\varphi_m\in\cX'_+$. For any $X\in\cX$ consider the set
\[
I_\cA(X) = \{i\in\{1,\dots,m\} \,; \ \varphi_i(X)=\sigma_\cA(\varphi_i)\}.
\]
Then, the following statements are equivalent:
\begin{enumerate}[(a)]
  \item $|\Risk(X)|=1$ for every $X\in\cX$.
  \item $\ker(\pi)\cap\bigcap_{i\in I_\cA(X)}\ker(\psi_i)=\{0\}$ for every $X\in\bd\cA\cap\bd(\cA+\ker(\pi))$.
\end{enumerate}
\end{proposition}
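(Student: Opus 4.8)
The plan is to reduce the global statement to boundary positions, describe the optimal set there as an explicit polyhedron through the origin, and then exploit the translation invariance of $\Risk$ to convert the \emph{one-sided} feasibility cone governing uniqueness into the \emph{two-sided} lineality space appearing in~(b). First I would invoke Corollary~\ref{cor: existence for polyhedral} to guarantee $\Risk(X)\neq\emptyset$ for every $X$, so that Proposition~\ref{prop: characterization uniqueness} applies and (a) is equivalent to $|\Risk(X)|=1$ for every $X$ in $D:=\bd\cA\cap\bd(\cA+\ker(\pi))$. For such $X$ we have $\risk(X)=0$ by Lemma~\ref{lem: properties rho}, hence $\Risk(X)=\{Z\in\ker(\pi)\,;\ X+Z\in\cA\}$, a polyhedron (Proposition~\ref{prop: reformulation Risk}) containing $0$. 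Writing out the constraints $\varphi_i(X+Z)\geq\sigma_\cA(\varphi_i)$ and noting that exactly the indices $i\in I_\cA(X)$ are active at $Z=0$, the cone of feasible directions of $\Risk(X)$ at $0$ is $K(X):=\{Z\in\ker(\pi)\,;\ \varphi_i(Z)\geq0,\ i\in I_\cA(X)\}$, whose lineality space $K(X)\cap(-K(X))$ is precisely the set $L(X):=\ker(\pi)\cap\bigcap_{i\in I_\cA(X)}\ker(\varphi_i)$ from~(b). The key subtlety, which I flag as the main obstacle, is that $|\Risk(X)|=1$ amounts to the one-sided condition $K(X)=\{0\}$, whereas~(b) is the weaker two-sided condition $L(X)=\{0\}$; these are genuinely inequivalent at a single $X$ (at an isolated optimum $0$ is a vertex, so $L(X)=\{0\}$ even when $K(X)\neq\{0\}$), so the universal quantifier over $D$ is indispensable.

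For the implication (a)$\Rightarrow$(b) I would argue pointwise. Fix $X\in D$ and $W\in L(X)$. Since $\pi(W)=0=\risk(X)$ and, for small $\e>0$, one has $\varphi_i(X\pm\e W)=\sigma_\cA(\varphi_i)$ for $i\in I_\cA(X)$ while the strict inequalities for $i\notin I_\cA(X)$ survive a small perturbation, it follows that $X\pm\e W\in\cA$ and hence $\pm\e W\in\Risk(X)$. As $\Risk(X)$ is a singleton containing $0$, this forces $W=0$, so $L(X)=\{0\}$.

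The implication (b)$\Rightarrow$(a) is the heart of the argument, and here I would use a relative-interior/translation trick to overcome the obstacle above. Arguing by contraposition, suppose (a) fails; by Proposition~\ref{prop: characterization uniqueness} there is $X_0\in D$ with $\Risk(X_0)\neq\{0\}$, so this polyhedron has dimension at least $1$. I would pick $V\in\relint\Risk(X_0)$ and let $\cL\neq\{0\}$ be the linear subspace parallel to $\aff\Risk(X_0)$. By Proposition~\ref{prop: reformulation Risk}, $X_1:=X_0+V\in D$ and $\Risk(X_1)=\Risk(X_0)-V$, whence $0\in\relint\Risk(X_1)$; thus for every $W\in\cL$ and all small $\e>0$ we have $\pm\e W\in\Risk(X_1)$. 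Reading off the constraints exactly as in the forward direction yields $W\in\ker(\pi)$ and $\varphi_i(W)=0$ for all $i\in I_\cA(X_1)$, i.e.\ $\cL\subseteq L(X_1)$. Hence $L(X_1)\neq\{0\}$, contradicting~(b), and therefore (a) must hold.

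The crux is precisely this bridge between $K(X)$ and $L(X)$: a relative-interior point $V$ of a non-trivial optimal set, combined with the identity $\Risk(X_0+V)=\Risk(X_0)-V$, relocates the analysis to a position $X_1$ at which the \emph{entire} affine-hull direction of the optimal set becomes available bidirectionally, so that genuine non-uniqueness is detected by a nonzero lineality direction and thus by a failure of~(b). (I note that the condition as stated writes $\ker(\psi_i)$; this should read $\ker(\varphi_i)$, matching the functionals representing $\cA$, and I use $\varphi_i$ throughout.)
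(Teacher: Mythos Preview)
Your proof is correct and follows essentially the same strategy as the paper's. For the direction (b)$\Rightarrow$(a) the paper also translates to an ``interior'' position---specifically, it takes two distinct $Z_1,Z_2\in\Risk(X)$ and moves to $Y=X+\tfrac{1}{2}(Z_1+Z_2)$, then shows $Z_1-Z_2\in L(Y)$ by the same bidirectional inequality argument you use; your relative-interior formulation is a mild generalisation of this midpoint trick but not a materially different idea.
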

\begin{proof}
First of all, recall from Corollary~\ref{cor: existence for polyhedral} that every position admits an optimal payoff so that $\Risk(X)$ is nonempty for all $X\in\cX$. In addition, note that for any position $X\in\cX$ we have $I_\cA(X)\neq\emptyset$ if and only if $X\in\bd\cA$ due to polyhedrality. To prove that {\em (a)} implies {\em (b)}, assume that condition {\em (b)} fails for $X\in\bd\cA\cap\bd(\cA+\ker(\pi))$ so that we find a nonzero $Z\in\ker(\pi)$ that belongs to $\ker(\varphi_i)$ for all $i\in I_\cA(X)$. In particular, note that
\[
\varphi_i(X+\lambda Z) = \varphi_i(X)+\lambda\varphi_i(Z) = \sigma_\cA(\varphi_i) \ \ \ \mbox{for} \ i\in I_\cA(X)
\]
for every $\lambda\in(0,\infty)$. Since $\varphi_i(X)>\sigma_\cA(\varphi_i)$ for $i\notin I_\cA(X)$, it is also clear that
\[
\varphi_i(X+\lambda Z) = \varphi_i(X)+\lambda\varphi_i(Z) \geq \sigma_\cA(\varphi_i) \ \ \ \mbox{for} \ i\notin I_\cA(X)
\]
for $\lambda\in(0,\infty)$ small enough. This implies that $X+\lambda Z\in\cA$ for $\lambda\in(0,\infty)$ small enough. Since $\risk(X)=0$, we conclude that $|\Risk(X)|>1$. This establishes that {\em (a)} implies {\em (b)}.

\smallskip

Conversely, assume that {\em (a)} does not hold so that $|\Risk(X)|>1$ for some $X\in\bd\cA\cap\bd(\cA+\ker(\pi))$ by Proposition~\ref{prop: characterization uniqueness}. Take two distinct $Z_1,Z_2\in\Risk(X)$ and set $Y=X+\tfrac{1}{2}(Z_1+Z_2)$. Since $\tfrac{1}{2}(Z_1+Z_2)\in\Risk(X)$ by convexity of $\cA$, it follows from Proposition~\ref{prop: reformulation Risk} that $Y$ belongs to $\bd\cA\cap\bd(\cA+\ker(\pi))$. Then, for every $i\in I_\cA(Y)$ we have
\[
\sigma_\cA(\varphi_i) \leq
\begin{cases}
\varphi_i(X+Z_1) = \varphi_i(Y)+\tfrac{1}{2}\varphi_i(Z_1-Z_2) = \sigma_\cA(\varphi_i)+\tfrac{1}{2}\varphi_i(Z_1-Z_2)\\
\varphi_i(X+Z_2) = \varphi_i(Y)+\tfrac{1}{2}\varphi_i(Z_2-Z_1) = \sigma_\cA(\varphi_i)+\tfrac{1}{2}\varphi_i(Z_2-Z_1)
\end{cases}
\]
where we used that both $X+Z_1$ and $X+Z_2$ belong to $\cA$ in the first inequality. This implies that $Z_1-Z_2\in\ker(\varphi_i)$ for all $i\in I_\cA(Y)$. Since $\pi(Z_1-Z_2)=\risk(X)-\risk(X)=0$, we conclude that condition {\em (b)} is violated. It follows that {\em (b)} implies {\em (a)}.
\end{proof}

\medskip

The above condition for uniqueness takes the following simple form for acceptance sets that are polyhedral cones.

\begin{corollary}
Assume $\cA$ is a polyhedral cone represented by $\varphi_1,\dots,\varphi_m\in\cX'_+$ and
\[
\ker(\pi)\cap\ker(\varphi_i)=\{0\} \ \ \ \mbox{for all} \ i\in\{1,\dots,m\}.
\]
Then, $|\Risk(X)|=1$ for every $X\in\cX$.
\end{corollary}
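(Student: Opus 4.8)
The plan is to derive the corollary as an immediate special case of the preceding proposition. That proposition characterizes global uniqueness, i.e.\ $|\Risk(X)|=1$ for every $X\in\cX$, through the condition
\[
\ker(\pi)\cap\bigcap_{i\in I_\cA(X)}\ker(\varphi_i)=\{0\}
\]
required to hold at every $X\in\bd\cA\cap\bd(\cA+\ker(\pi))$, where $I_\cA(X)=\{i \,; \ \varphi_i(X)=\sigma_\cA(\varphi_i)\}$. So all I would need to do is check that the hypothesis $\ker(\pi)\cap\ker(\varphi_i)=\{0\}$ for all $i$ forces this condition at each relevant $X$. Since $\cA$ is polyhedral, Corollary~\ref{cor: existence for polyhedral} already guarantees that $\Risk(X)\neq\emptyset$ for every $X$, so the hypothesis $\Risk(X)\neq\emptyset$ needed to invoke the characterization is in place.

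The key step is the observation that, by polyhedrality, $I_\cA(X)\neq\emptyset$ whenever $X\in\bd\cA$ --- precisely the fact, recorded in the proof of the preceding proposition, that boundary points of a polyhedral set are support points. Fix any $X\in\bd\cA\cap\bd(\cA+\ker(\pi))$; since $X\in\bd\cA$, there is at least one index $i_0\in I_\cA(X)$. Then
\[
\ker(\pi)\cap\bigcap_{i\in I_\cA(X)}\ker(\varphi_i)\subset\ker(\pi)\cap\ker(\varphi_{i_0})=\{0\},
\]
the last equality being exactly the corollary's hypothesis. Hence condition~{\em (b)} of the preceding proposition holds at every such $X$, and global uniqueness follows.

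I do not anticipate any genuine obstacle here: the core of the argument is just that an intersection over a nonempty index set is contained in any single one of its terms. The only point requiring care --- and the one worth flagging explicitly --- is the nonemptiness of $I_\cA(X)$ for boundary points, which is where polyhedrality (equivalently, the coincidence of boundary points with support points) enters; were $I_\cA(X)$ allowed to be empty, the displayed intersection would reduce to $\ker(\pi)$ and the conclusion could fail. For completeness one may also note that $\cA$ being a cone forces $\sigma_\cA(\varphi_i)=0$ for each $i$ via Lemma~\ref{prop: properties support functions}, but this value plays no role in the argument and can be omitted.
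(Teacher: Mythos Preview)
Your argument is correct and is precisely the immediate deduction the paper intends: the corollary is stated without proof as a direct specialization of the preceding proposition, and your verification that $I_\cA(X)\neq\emptyset$ for boundary points together with the trivial inclusion $\bigcap_{i\in I_\cA(X)}\ker(\varphi_i)\subset\ker(\varphi_{i_0})$ is exactly what is needed. Your closing observation that the cone hypothesis is not actually used in the argument is also accurate.
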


\medskip

We conclude this section by establishing another useful result on uniqueness, which states that uniqueness is always ensured whenever the acceptance set is ``strictly convex'' along the directions of $\ker(\pi)$.

\begin{proposition}
\label{prop: sufficient condition uniqueness}
Assume $\Risk(X)\neq\emptyset$ for all $X\in\cX$ and for any distinct $X,Y\in\bd\cA$ with $X-Y\in\ker(\pi)$ there exists $\lambda\in(0,1)$ satisfying $\lambda X+(1-\lambda)Y\in\Int\cA$. Then, $|\Risk(X)|=1$ for all $X\in\cX$.
\end{proposition}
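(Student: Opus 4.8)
The plan is to argue by contradiction: I would suppose that $|\Risk(X_0)|\geq 2$ for some $X_0\in\cX$ and then manufacture a violation of optimality by producing an acceptable eligible payoff of the same price whose image lands in the \emph{interior} of $\cA$. Concretely, pick two distinct $Z_1,Z_2\in\Risk(X_0)$ and set $P=X_0+Z_1$ and $Q=X_0+Z_2$. By Proposition~\ref{prop: reformulation Risk}{\em (i)} both $P$ and $Q$ belong to $\bd\cA\cap\bd(\cA+\ker(\pi))$; in particular $P,Q\in\bd\cA$. They are distinct because $Z_1\neq Z_2$, and since $\pi(Z_1)=\pi(Z_2)=\risk(X_0)$ their difference $P-Q=Z_1-Z_2$ is a nonzero element of $\ker(\pi)$.

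At this point $P,Q$ satisfy exactly the premise of the assumed strict convexity along $\ker(\pi)$, so there is $\lambda\in(0,1)$ with $\lambda P+(1-\lambda)Q\in\Int\cA$. The key observation is that this interior point is again of the form $X_0+W$ for an eligible payoff, namely $W:=\lambda Z_1+(1-\lambda)Z_2\in\cM$, and that by linearity of $\pi$ one has $\pi(W)=\lambda\pi(Z_1)+(1-\lambda)\pi(Z_2)=\risk(X_0)$. Thus $W$ is acceptable (as $\Int\cA\subset\cA$) and attains the minimal price $\risk(X_0)$, so $W\in\Risk(X_0)$ by the very definition of the optimal payoff map. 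Invoking Proposition~\ref{prop: reformulation Risk}{\em (i)} once more forces $X_0+W\in\bd\cA$, which contradicts $X_0+W=\lambda P+(1-\lambda)Q\in\Int\cA$ since $\bd\cA\cap\Int\cA=\emptyset$. Hence no two distinct optimal payoffs can exist and $|\Risk(X)|=1$ for every $X\in\cX$.

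There is no genuinely hard step here: the argument is essentially a translation of the purely geometric hypothesis (distinct boundary points whose connecting segment dips into the interior) into the language of payoffs. The only points requiring care are, first, checking that the hypothesis is applicable --- which amounts to the two bookkeeping facts $P-Q\in\ker(\pi)$ and $P\neq Q$ recorded above --- and, second, recalling that an optimal payoff must land on $\bd\cA$ rather than in $\Int\cA$. This latter fact is precisely the content of Proposition~\ref{prop: reformulation Risk}{\em (i)}, whose proof uses Assumption~1 (the payoff $U\in\cM\cap\cX_+$ with $\pi(U)=1$) to cheapen any interior allocation by moving in the direction $-U$; I would simply cite it rather than reprove it. Note also that the standing hypothesis $\Risk(X)\neq\emptyset$ is what guarantees the $Z_i$ exist in the first place and lets me conclude uniqueness globally.
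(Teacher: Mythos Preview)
Your argument is correct. It is, however, a genuinely different route from the paper's. The paper does not argue by contradiction at the level of $\Risk(X_0)$; instead it verifies condition {\em (d)} of the characterization in Proposition~\ref{prop: characterization uniqueness}, namely that every point of $\bd\cA\cap(\bd\cA+\ker(\pi)\setminus\{0\})$ lies in $\Int(\cA+\ker(\pi))$. Given such a point $X=Y+Z$ with $Y\in\bd\cA$ and $Z\in\ker(\pi)\setminus\{0\}$, the paper invokes the hypothesis to obtain $M:=\lambda X+(1-\lambda)Y\in\Int\cA$, then translates an interior neighborhood of $M$ by $(1-\lambda)Z\in\ker(\pi)$ to get a neighborhood of $X$ inside $\cA+\ker(\pi)$.

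Your approach bypasses Proposition~\ref{prop: characterization uniqueness} entirely: you work directly with two distinct optimal payoffs, form their convex combination $W$, and derive the contradiction $X_0+W\in\Int\cA\cap\bd\cA$ from Proposition~\ref{prop: reformulation Risk}{\em (i)} alone. This is shorter and more self-contained --- it uses only the elementary fact that optimal payoffs land on $\bd\cA$ --- whereas the paper's detour through the set-theoretic condition {\em (d)} and the augmented set $\cA+\ker(\pi)$ keeps the argument aligned with the general characterization, which is convenient when one already has Proposition~\ref{prop: characterization uniqueness} at hand. Neither approach dominates the other, but yours is the more direct proof of this particular statement.
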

\begin{proof}
Take any position $X\in\bd\cA\cap(\bd\cA+\ker(\pi)\setminus\{0\})$. The claim follows directly from Proposition~\ref{prop: characterization uniqueness} once we show that $X$ belongs to $\Int(\cA+\ker(\pi))$. To this effect, note that $X=Y+Z$ for a suitable position $Y\in\bd\cA$ and a nonzero payoff $Z\in\ker(\pi)$. Hence, by assumption, we find a scalar $\lambda\in(0,1)$ satisfying $\lambda X+(1-\lambda)Y\in\Int\cA$. Now, set for convenience
\[
M = X-(1-\lambda)Z = \lambda X+(1-\lambda)Y \in \Int\cA,
\]
so that $M+\cU\subset\cA$ for some neighborhood of zero $\cU\subset\cX$. Since every $W\in X+\cU$ is easily seen to satisfy $W-(1-\lambda)Z-M = W-X \in \cU$, it follows that
\[
X+\cU \subset (1-\lambda)Z+M+\cU \subset \cA+\ker(\pi).
\]
This shows that $X$ is an interior point of $\cA+\ker(\pi)$ and concludes the proof.
\end{proof}

\smallskip

\begin{remark}
The above sufficient condition for uniqueness is, in general, not necessary. To see this, let $\cX=\R^3$ and consider the polyhedral acceptance set given by
\[
\cA = \co(\{X^1,X^2,X^3\})+\cX_+
\]
where $X^1=(0,-1,1)$, $X^2=(-1,0,1)$, and $X^3=(-\tfrac{1}{2},0,0)$. Assume that $\cM=\cX$ and define $\pi$ by setting $\pi(X)=\tfrac{1}{3}(X_1+X_2+X_3)$ for all $X\in\cX$. It is immediate to verify that our assumptions (A1) to (A3) are all satisfied in this setting. Moreover, since
\[
\cA+\ker(\pi) = \{X\in\cX \,; \ X_1+X_2+X_3\geq-\tfrac{1}{2}\},
\]
it follows that $\Risk(X)$ is nonempty by Proposition~\ref{prop: existence solutions} and is easily seen to consist of a single payoff due to Proposition~\ref{prop: characterization uniqueness} for every $X\in\cX$. However, the positions $X^1$ and $X^2$ both belong to $\bd\cA$ and satisfy $X^1-X^2\in\ker(\pi)\setminus\{0\}$ and the entire segment connecting $X^1$ and $X^2$ lie in $\bd\cA$.\hfill$\qed$
\end{remark}

\medskip

As a direct consequence of Proposition~\ref{prop: sufficient condition uniqueness}, we infer that a position admits at most one optimal payoff if the underlying acceptance set is strictly convex.

\begin{corollary}
\label{cor: uniqueness strictly convex}
Assume $\cA$ is strictly convex and $\Risk(X)\neq\emptyset$ for all $X\in\cX$. Then, $|\Risk(X)|=1$ for all $X\in\cX$.
\end{corollary}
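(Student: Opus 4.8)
The plan is to obtain this corollary as an immediate application of Proposition~\ref{prop: sufficient condition uniqueness}. That proposition already delivers uniqueness under the standing existence hypothesis $\Risk(X)\neq\emptyset$ together with a geometric ``strict convexity along $\ker(\pi)$'' condition, namely that any two distinct boundary points of $\cA$ whose difference lies in $\ker(\pi)$ admit a proper convex combination in $\Int\cA$. Hence it suffices to check that this geometric condition is implied by genuine strict convexity of $\cA$, and then to quote the proposition verbatim.

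To verify the hypothesis, I would first recall that $\cA$ is closed by Assumption~2, so that $\bd\cA\subset\cA$. Now take any two distinct boundary points $X,Y\in\bd\cA$ with $X-Y\in\ker(\pi)$. Since $X$ and $Y$ are then distinct elements of $\cA$ and $\cA$ is strictly convex, the very definition of strict convexity gives $\lambda X+(1-\lambda)Y\in\Int\cA$ for \emph{every} $\lambda\in(0,1)$; in particular such a $\lambda$ exists, which is precisely what Proposition~\ref{prop: sufficient condition uniqueness} requires. Having confirmed the hypothesis, I would invoke that proposition to conclude $|\Risk(X)|=1$ for all $X\in\cX$.

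I expect no genuine obstacle here: strict convexity is in fact stronger than needed, since it produces interior convex combinations for \emph{all} distinct pairs in $\cA$, whereas the proposition only asks this of distinct boundary points whose difference lies in $\ker(\pi)$. The only point deserving a word of care is that boundary points actually belong to $\cA$, so that the definition of strict convexity applies to them; this is exactly the closedness of $\cA$ stipulated in Assumption~2, and so the argument is complete in a single short step.
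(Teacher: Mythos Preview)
Your proposal is correct and matches the paper's approach exactly: the paper presents this corollary as a direct consequence of Proposition~\ref{prop: sufficient condition uniqueness}, and your verification that strict convexity of the closed set $\cA$ trivially implies the required ``strict convexity along $\ker(\pi)$'' condition is precisely the intended one-line argument.
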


\smallskip

\begin{remark}
In the multivariate setting of Armenti et al.~(2016), the uniqueness of optimal payoffs is crucial to define a meaningful allocation of systemic risk across the entities of the financial system under investigation. The uniqueness result recorded in their Theorem~3.6 can be seen to be equivalent to the strict convexity of the underlying acceptance set.\hfill$\qed$
\end{remark}


\section{Stability of optimal payoffs}
\label{sec: continuity optimal payoff map}

The remainder of the paper focuses on the question of stability or robustness of optimal portfolios. As mentioned in the introduction, from an operational perspective it is critical to ensure that a slight {\em perturbation} of $X$, arising for instance from estimation or specification errors, does not alter the set of optimal payoffs in a significant way. In other words, for all positions $X,Y\in\cX$ we would like to ensure that
\[
\mbox{$Y$ is close to $X$} \ \implies \ \mbox{$\Risk(Y)$ is ``close'' to $\Risk(X)$}.
\]
In order to specify a notion of proximity between sets of optimal payoffs we are naturally led to investigate the (semi)continuity properties of the optimal payoff map. A variety of (semi)continuity notions for set-valued maps have been investigated in the literature and the same terminology is often used to capture different forms of continuity. The notions of upper and lower semicontinuity go back to Berge~(1963), but our notion of upper semicontinuity is slightly different and aligned with that of upper hemicontinuity in Aliprantis and Border~(2006). The notion of outer semicontinuity is taken from Rockafellar and Wets~(2009).

\bigskip

{\bf Outer semicontinuity}. For any given position $X\in\cX$ we say that $\Risk$ is {\em outer semicontinuous at $X$} if for every $Z\notin\Risk(X)$ we find open neighborhoods $\cU_X\subset\cX$ of $X$ and $\cU_Z\subset\cM$ of $Z$ such that
\[
Y\in\cU_X \ \implies \ \Risk(Y)\cap\cU_Z=\emptyset.
\]
We say that $\Risk$ is {\em outer semicontinuous (on $\cX$)} if $\Risk$ is outer semicontinuous at every $X\in\cX$. This stability property requires that an eligible payoff that is non-optimal for a given position cannot be optimal for any position that is sufficiently close to the given one.

\bigskip

{\bf Upper semicontinuity}. For any given position $X\in\cX$ we say that $\Risk$ is {\em upper semicontinuous at $X$} if for every open set $\cU\subset\cM$ with $\Risk(X)\subset\cU$ we find an open neighborhood $\cU_X\subset\cX$ of $X$ such that
\[
Y\in\cU_X \ \implies \ \Risk(Y)\subset\cU.
\]
We say that $\Risk$ is {\em upper semicontinuous (on $\cX$)} if $\Risk$ is upper semicontinuous at every $X\in\cX$. Intuitively speaking, upper semicontinuity ensures that the set of optimal payoffs does not suddenly ``explode'' as a result of a slight perturbation of the underlying position.

\bigskip

{\bf Lower semicontinuity}. For a position $X\in\cX$ we say that $\Risk$ is {\em lower (or inner) semicontinuous at $X$} if for every open set $\cU\subset\cM$ with $\Risk(X)\cap\,\cU\neq\emptyset$ we find an open neighborhood $\cU_X\subset\cX$ of $X$ such that
\[
Y\in\cU_X \ \implies \ \Risk(Y)\cap\,\cU\neq\emptyset.
\]
We say that $\Risk$ is {\em lower (or inner) semicontinuous (on $\cX$)} if $\Risk$ is lower semicontinuous at every $X\in\cX$. Intuitively speaking, lower semicontinuity ensures that the set of optimal payoffs does not suddenly ``shrink'' as a result of a slight perturbation of the underlying position. In this case, for every position $X$ the following intuitive robustness property is satisfied:
\[
\mbox{$Y$ is close to $X$ and $Z\in\Risk(X)$} \ \implies \ \mbox{there exists a payoff in $\Risk(Y)$ that is close to $Z$}.
\]

In other words, lower semicontinuity guarantees that an optimal payoff remains close to being optimal after a small perturbation of the underlying position. This continuity property therefore constitutes the key stability notion in our financial context.

\smallskip

\begin{remark}
\label{rem: semicontinuity}
(i) Since $\Risk$ is closed valued, it is immediate to see that upper semicontinuity always implies outer semicontinuity for our optimal payoff map $\Risk$.

\smallskip

(ii) Consider a position $X\in\cX$ with $|\Risk(X)|=1$. Then, $\Risk$ is lower semicontinuous at $X$ whenever it is upper semicontinuous at $X$. The converse is, however, not true. But if $|\Risk(X)|=1$ for all $X\in\cX$, then upper and lower semicontinuity are equivalent and boil down to the continuity of the map assigning to $X$ the unique element in $\Risk(X)$.

\smallskip

(iii) For set-valued maps that are convex valued, the property of lower semicontinuity is especially powerful because it ensures the existence of continuous selections; see Theorem~17.66 in Aliprantis and Border (2009). Recall that a function $\zeta:\cX\to\cM$ is a {\em selection} of $\Risk$ if it satisfies $\zeta(X)\in\Risk(X)$ for every $X\in\cX$ such that $\Risk(X)$ is nonempty. A continuous selection for the optimal payoff map can be therefore interpreted as a procedure to select optimal payoffs in a robust way.\hfill$\qed$
\end{remark}


\subsection*{Outer semicontinuity}

We start by showing that the optimal payoff map is always outer semicontinuous on the whole of $\cX$.

\begin{theorem}
The optimal payoff map $\Risk$ is outer semicontinuous.
\end{theorem}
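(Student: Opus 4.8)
The plan is to prove outer semicontinuity of $\Risk$ by a direct sequential argument, exploiting the key reformulation from Proposition~\ref{prop: reformulation Risk}\emph{(i)}, namely that $Z\in\Risk(X)$ if and only if $X+Z\in\bd\cA\cap\bd(\cA+\ker(\pi))$. The strategy is to argue by contradiction: suppose $\Risk$ fails to be outer semicontinuous at some $X\in\cX$. Unwinding the definition, this means there exists a payoff $Z\notin\Risk(X)$ that cannot be separated from the graph of $\Risk$ near $X$; more precisely, for every pair of open neighborhoods $\cU_X$ of $X$ and $\cU_Z$ of $Z$ there is some $Y\in\cU_X$ with $\Risk(Y)\cap\cU_Z\neq\emptyset$. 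Since $\cX$ is first countable and $\cM$ is normable, I can extract sequences rather than work with nets: choosing a countable neighborhood base, I obtain a sequence $(Y_n)\subset\cX$ with $Y_n\to X$ and payoffs $Z_n\in\Risk(Y_n)$ with $Z_n\to Z$.

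The heart of the proof is then to pass to the limit in the membership relation. By Proposition~\ref{prop: reformulation Risk}\emph{(i)}, each $Z_n\in\Risk(Y_n)$ is equivalent to $Y_n+Z_n\in\bd\cA\cap\bd(\cA+\ker(\pi))$. Now $Y_n+Z_n\to X+Z$ in $\cX$ because $Y_n\to X$ and $Z_n\to Z$ (the inclusion $\cM\hookrightarrow\cX$ is continuous, so convergence in the norm of $\cM$ implies convergence in $\cX$). Since both $\bd\cA$ and $\bd(\cA+\ker(\pi))$ are closed sets in $\cX$, their intersection is closed, and therefore the limit satisfies $X+Z\in\bd\cA\cap\bd(\cA+\ker(\pi))$. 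Applying Proposition~\ref{prop: reformulation Risk}\emph{(i)} once more in the reverse direction yields $Z\in\Risk(X)$, contradicting the choice of $Z\notin\Risk(X)$. This contradiction establishes outer semicontinuity at $X$, and since $X$ was arbitrary, on all of $\cX$.

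The main technical point to handle with care is the \emph{extraction of sequences} from the neighborhood-base structure and ensuring the two convergences $Y_n\to X$ and $Z_n\to Z$ can be arranged simultaneously. Concretely, using first countability of $\cX$ I fix a decreasing base $(\cU_X^n)$ of neighborhoods of $X$, and using the norm on $\cM$ I take $\cU_Z^n=\{W\in\cM\,;\ \|W-Z\|<1/n\}$; the failure of outer semicontinuity supplies, for each $n$, a point $Y_n\in\cU_X^n$ together with $Z_n\in\Risk(Y_n)\cap\cU_Z^n$, which gives both convergences at once. Everything else is routine once this sequence is in hand: the closedness of $\bd\cA$ and of $\bd(\cA+\ker(\pi))$ is automatic (boundaries of sets are always closed), and the continuity of the addition map $\cX\times\cX\to\cX$ guarantees $Y_n+Z_n\to X+Z$. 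I expect no genuine obstacle here; the only subtlety worth flagging is that the reformulation in Proposition~\ref{prop: reformulation Risk}\emph{(i)} must hold \emph{without} any nonemptiness hypothesis on $\Risk(X)$, which is indeed the case since it is an identity of sets (if $X+Z\in\bd\cA\cap\bd(\cA+\ker(\pi))$ for no $Z$, both sides are simply empty).
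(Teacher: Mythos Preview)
Your proof is correct. It is, however, a genuinely different argument from the paper's. The paper proceeds by a direct case analysis on why $Z\notin\Risk(X)$: either $Z\notin\cM$, or $X+Z\notin\cA$, or $\pi(Z)\neq\risk(X)$, and in each case it constructs the separating neighborhoods explicitly from, respectively, the closedness of $\cM$, the closedness of $\cA$, and the continuity of $\pi$ and $\risk$. Your argument instead packages everything into the reformulation of Proposition~\ref{prop: reformulation Risk}\emph{(i)} and reduces outer semicontinuity to the single observation that $\bd\cA\cap\bd(\cA+\ker(\pi))$ is closed; in effect you are showing directly that the graph of $\Risk$ is closed in $\cX\times\cM$, which is equivalent to outer semicontinuity for closed-valued maps. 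Your route is shorter and avoids case splits, at the cost of relying on Proposition~\ref{prop: reformulation Risk}\emph{(i)} (whose proof already uses Lemma~\ref{lem: properties rho} and hence the continuity of $\risk$); the paper's route is more self-contained and makes transparent exactly which primitive ingredients---closedness of $\cA$, continuity of $\risk$ and $\pi$---drive the conclusion.
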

\begin{proof}
Take an arbitrary $X\in\cX$ and fix $Z\in\cX\setminus\Risk(X)$. Assume first that $Z\notin\cM$. In this case, since $\cM$ is closed, we find a neighborhood $\cU_Z\subset\cM$ of $Z$ satisfying $\cU_Z\cap\cM=\emptyset$, which implies $\Risk(Y)\cap\cU_Z=\emptyset$ for every $Y\in\cX$. Assume now that $Z\in\cM$ but $X+Z\notin\cA$. Since $\cA$ is closed, there exist neighborhoods $\cU_X\subset\cX$ of $X$ and $\cU_Z\subset\cM$ of $Z$ such that $(\cU_X+\cU_Z)\cap\cA=\emptyset$. In particular, we must have $\Risk(Y)\cap\cU_Z=\emptyset$ for every $Y\in\cU_X$. Finally, assume that $Z\in\cM$ and $X+Z\in\cA$ but $\pi(Z)\neq\risk(X)$. In this case, set
\[
\e = \tfrac{1}{4}|\pi(Z)-\risk(X)|
\]
and consider the neighborhoods of $X$ and $Z$ defined, respectively, by (recall that $\risk$ and $\pi$ are continuous)
\[
\cU_X=\{Y\in\cX \,; \ |\risk(Y)-\risk(X)|<\e\} \ \ \ \mbox{and} \ \ \ \cU_Z=\{W\in\cM \,; \ |\pi(W)-\pi(Z)|<\e\}.
\]
Then, taking any element $W\in\Risk(Y)\cap\cU_Z$ with $Y\in\cU_X$ we obtain
\[
\e \leq \tfrac{1}{4}|\pi(Z)-\pi(W)|+\tfrac{1}{4}|\pi(W)-\risk(X)| = \tfrac{1}{4}|\pi(Z)-\pi(W)|+\tfrac{1}{4}|\risk(Y)-\risk(X)| < \tfrac{1}{4}\e+\tfrac{1}{4}\e = \tfrac{1}{2}\e,
\]
which is clearly impossible. This shows that $\Risk(Y)\cap\cU_Z=\emptyset$ must hold for every $Y\in\cU_X$. In conclusion, it follows that $\Risk$ is always outer semicontinuous at $X$.
\end{proof}


\subsection*{Upper semicontinuity}

This section provides necessary and sufficient conditions for the optimal payoff map to be upper semicontinuous. We start by proving a sequential characterization of upper semicontinuity based on the general characterization recorded in Theorem 17.20 in Aliprantis and Border~(2006). By exploiting the special structure of the optimal payoff map, we can sharpen that result and show that upper semicontinuity can be charaterized in terms of sequences without any compactness assumption. In fact, upper semicontinuity always implies compact-valuedness.

\begin{proposition}
\label{prop: characterization usc}
The following statements are equivalent:
\begin{enumerate}[(a)]
  \item $\Risk$ is upper semicontinuous.
  \item $\Risk(\cK)$ is bounded (in $\cM$) for every compact set $\cK\subset\cX$.
  \item For every $X\in\cX$ we have
\[
X_n\to X, \ Z_n\in\Risk(X_n) \ \implies \ \exists Z\in\Risk(X), \ \exists (Z_{n_k}) \,:\, Z_{n_k}\to Z.
\]
\end{enumerate}
\end{proposition}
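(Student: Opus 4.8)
The plan is to establish the cycle of implications (a)$\Rightarrow$(b)$\Rightarrow$(c)$\Rightarrow$(a), dispatching the two routine implications first. For (b)$\Rightarrow$(c), given $X_n\to X$ and $Z_n\in\Risk(X_n)$, I would observe that $\cK:=\{X\}\cup\{X_n \,; \ n\in\N\}$ is compact, so by (b) the sequence $(Z_n)$ lies in the bounded set $\Risk(\cK)$; since $\cM$ is finite-dimensional, $(Z_n)$ has a convergent subsequence $Z_{n_k}\to Z$, and passing to the limit in $X_{n_k}+Z_{n_k}\in\cA$ (closedness of $\cA$) and in $\pi(Z_{n_k})=\risk(X_{n_k})$ (continuity of $\pi$ and of $\risk$) gives $Z\in\Risk(X)$. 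For (c)$\Rightarrow$(a), I argue by contradiction: if $\Risk$ fails to be upper semicontinuous at $X$, there is an open $\cU\supset\Risk(X)$ such that every neighborhood of $X$ contains a $Y$ with $\Risk(Y)\not\subset\cU$; using first countability of $\cX$ I extract $X_n\to X$ with $Z_n\in\Risk(X_n)\setminus\cU$, and (c) produces $Z_{n_k}\to Z\in\Risk(X)\subset\cU$, contradicting $Z_{n_k}\notin\cU$ since $\cU$ is open.

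The substantial direction is (a)$\Rightarrow$(b), which I would split into two steps, the decisive one being that upper semicontinuity forces every value $\Risk(X)$ to be bounded. I prove the contrapositive: if $\Risk(X)$ is unbounded, then $\Risk$ is not upper semicontinuous at $X$. The naive idea of separating an escaping sequence from $\Risk(X)$ by a fixed-radius tube fails, because perturbed values remain within bounded Hausdorff distance of $\Risk(X)$. The key is twofold. First, I perturb along the eligible direction $U$ of Assumption~1: setting $Y_k:=X+\tfrac1kU$, Proposition~\ref{prop: reformulation Risk}(ii) gives $\Risk(Y_k)=\Risk(X)-\tfrac1kU$, again unbounded, with $\risk(Y_k)=\risk(X)-\tfrac1k$. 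Since $\Risk(X)\subset\{Z\in\cM \,; \ \pi(Z)=\risk(X)\}$ while $\Risk(Y_k)\subset\{Z\in\cM \,; \ \pi(Z)=\risk(X)-\tfrac1k\}$ lie on two parallel distinct hyperplanes, every point of $\Risk(Y_k)$ is at distance at least $\delta_k:=(\tfrac1k)/\|\pi\|>0$ from $\Risk(X)$. Second, since each $\Risk(Y_k)$ is unbounded, I may choose $p_k\in\Risk(Y_k)$ with $\|p_k\|$ strictly increasing to infinity and build a single open neighborhood of $\Risk(X)$ that \emph{narrows at infinity},
\[
\cU:=\{Z\in\cM \,; \ d(Z,\Risk(X))<g(\|Z\|)\},
\]
where $g>0$ is continuous with $g(\|p_k\|)<\delta_k$ for all $k$ (interpolating these prescribed values is possible because $\|p_k\|\to\infty$ and $\delta_k\to0$). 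Then $\cU$ is open and contains $\Risk(X)$, yet $p_k\notin\cU$ for every $k$; since $Y_k\to X$, upper semicontinuity at $X$ is violated.

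The second step of (a)$\Rightarrow$(b) is then easy. Once every value is bounded, I suppose $\Risk(\cK)$ is unbounded for some compact $\cK$, pick $X_n\in\cK$ and $Z_n\in\Risk(X_n)$ with $\|Z_n\|\to\infty$, and extract $X_n\to X\in\cK$ (compact subsets of the first-countable space $\cX$ are sequentially compact). As $\Risk(X)$ is bounded, the fixed-radius tube $\{Z\in\cM \,; \ d(Z,\Risk(X))<1\}$ is a \emph{bounded} open set containing $\Risk(X)$, so upper semicontinuity at $X$ forces $Z_n\in\Risk(X_n)$ to lie in it for large $n$, contradicting $\|Z_n\|\to\infty$. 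I expect the main obstacle to be exactly the first step: realizing that boundedness of the values cannot be detected by uniform tubes and must be extracted from narrowing neighborhoods, with the crucial positive separation $\delta_k$ supplied by the hyperplane structure $\Risk(\cdot)\subset\{\pi=\risk(\cdot)\}$ together with the perturbation along $U$.
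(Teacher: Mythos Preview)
Your proof is correct and follows essentially the same route as the paper: the cycle (a)$\Rightarrow$(b)$\Rightarrow$(c)$\Rightarrow$(a), with the substantive work in (a)$\Rightarrow$(b) done by first showing that upper semicontinuity forces each value $\Risk(X)$ to be bounded via a narrowing neighborhood combined with the $U$-perturbation and the hyperplane separation $\Risk(\cdot)\subset\{\pi=\risk(\cdot)\}$, and then passing from pointwise boundedness to boundedness on compacts. The paper builds its narrowing neighborhood canonically as a union of balls of radius $1/r$ around points of $\Risk(X)$ at distance $r$ from the origin and then invokes Lemma~17.8 in Aliprantis--Border for the second step, whereas you tailor a continuous narrowing function $g$ to a chosen unbounded sequence $(p_k)$ and give a direct tube argument for the second step; these are cosmetic variations on the same idea.
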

\begin{proof}
First of all, assume that {\em (a)} holds. Recall from Lemma~17.8 in Aliprantis and Border~(2006) that any upper semicontinuous set-valued map that is compact valued automatically sends compact sets into bounded sets. Since $\Risk$ is closed valued by Proposition~\ref{prop: reformulation Risk}, assertion {\em (b)} follows once we prove that $\Risk(X)$ is bounded (in $\cM$) for any given $X\in\cX$. To this effect, assume that $\Risk(X)\neq\emptyset$ and consider the open neighborhood of $\Risk(X)$ in $\cM$ defined by
\[
\cU = \bigcup_{Z\in\Risk(X)\cap\cB_1(0)}\hspace{-0.5cm}\Int\cB_1(Z)\cup
\bigcup_{r>1}\bigcup_{Z\in\Risk(X)\cap\bd\cB_r(0)}\hspace{-0.5cm}\Int\cB_{\frac{1}{r}}(Z).
\]
Moreover, consider the strictly-increasing function $f:(1,\infty)\to(2,\infty)$ given by $f(r)=r+\frac{1}{r}$. Note that for every $W\in\cU$ with $\|W\|>2$ we find suitable $r>1$ and $Z\in\Risk(X)\cap\bd\cB_r(0)$ such that $W\in\Int\cB_{\frac{1}{r}}(Z)$, which implies $\|W\| \leq \|Z\|+\|W-Z\| \leq f(r)$ and yields
\[
\|W-Z\| \leq \frac{1}{r} \leq \frac{1}{f^{-1}(\|W\|)}.
\]
As a result, it follows that
\begin{equation}
\label{eq: outer semi implies compact}
W\in\cU\setminus\cB_2(0) \ \implies \ d(\{W\},\Risk(X))\leq\frac{1}{f^{-1}(\|W\|)}.
\end{equation}
By upper semicontinuity there exists an open neighborhood $\cU_X\subset\cX$ of $X$ such that $\Risk(Y)\subset\cU$ for all $Y\in\cU_X$. In particular, we find $\e>0$ small enough so that $X+\e U\in\cU_X$ and thus
\begin{equation}
\label{eq: outer semi implies compact 2}
\Risk(X)-\e U = \Risk(X+\e U) \subset \cU.
\end{equation}
We claim that
\begin{equation}
\label{eq: outer semi implies compact 3}
d(\Risk(X)-\e U,\Risk(X)) > 0.
\end{equation}

\smallskip

To see this, take any $Z,W\in\Risk(X)$ and note that, since $\pi(Z)=\risk(X)=\pi(W)$, we have $Z-W\in\ker(\pi)$. This yields
\[
\|Z-\e U-W\| \geq \e d(\{U\},\ker(\pi)) > 0,
\]

\smallskip

establishing~\eqref{eq: outer semi implies compact 3} (the last inequality holds by continuity of $\pi$ because $\pi(U)>0$). By combining~\eqref{eq: outer semi implies compact}, \eqref{eq: outer semi implies compact 2} and~\eqref{eq: outer semi implies compact 3}, we conclude that $\Risk(X)$ must be bounded. Indeed, if this were not the case, we could find by~\eqref{eq: outer semi implies compact 2} an unbounded sequence $(Z_n)\subset\Risk(X)-\e U\subset\cU$ satisfying
\[
d(\{Z_n\},\Risk(X)) \leq \frac{1}{f^{-1}(\|Z_n\|)} \to 0
\]
by~\eqref{eq: outer semi implies compact}. However, this would contradict~\eqref{eq: outer semi implies compact 3}. In conclusion, it follows that $\Risk(X)$ is bounded and, thus, {\em (b)} holds.

\smallskip

Next, assume that {\em (b)} holds and consider a sequence $(X_n)\subset\cX$ and $X\in\cX$ such that $X_n\to X$. Moreover, let $Z_n\in\Risk(X_n)$ be fixed for each $n\in\N$. Since, being convergent, $(X_n)$ is contained in a compact set, it follows from {\em (a)} that $(Z_n)$ is bounded and therefore admits a convergent subsequence $(Z_{n_k})$. Let $Z\in\cM$ be the corresponding limit (note that $\cM$ is closed). Since $X_{n_k}+Z_{n_k}\in\bd\cA\cap\bd(\cA+\ker(\pi))$ for all $k\in\N$ by Proposition~\ref{prop: reformulation Risk}, we easily infer that $X+Z\in\bd\cA\cap\bd(\cA+\ker(\pi))$ as well. Hence, we can apply Proposition~\ref{prop: reformulation Risk} to conclude that $Z\in\Risk(X)$. This establishes {\em (c)}. We complete the proof of the equivalence by recalling that {\em (c)} always implies {\em (a)} due to Theorem~17.20 in Aliprantis and Border~(2006).
\end{proof}

\medskip

The next result shows that $\Risk$ is always upper semicontinuous whenever the market for the eligible assets admits no scalable good deals.

\begin{corollary}
\label{cor: sufficient usc asymptotic}
Assume the market admits no scalable good deals, i.e.~$\cA^\infty\cap\ker(\pi)=\{0\}$. Then, $\Risk$ is upper semicontinuous.
\end{corollary}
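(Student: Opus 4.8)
The plan is to reduce the statement to a boundedness condition and then argue by contradiction, extracting an unbounded direction that must lie in $\cA^\infty\cap\ker(\pi)$. By Proposition~\ref{prop: characterization usc}, upper semicontinuity of $\Risk$ is equivalent to $\Risk(\cK)$ being bounded in $\cM$ for every compact set $\cK\subset\cX$, so it suffices to verify this boundedness property under the no-scalable-good-deals assumption.

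First I would fix a compact $\cK\subset\cX$ and suppose, for contradiction, that $\Risk(\cK)$ is unbounded. Then I can find positions $X_n\in\cK$ and optimal payoffs $Z_n\in\Risk(X_n)$ with $\|Z_n\|\to\infty$. Since $\cK$ is compact, after passing to a subsequence I may assume $X_n\to X$ for some $X\in\cK$. Setting $\lambda_n=1/\|Z_n\|\to0$, the normalized payoffs $\lambda_nZ_n$ all lie on the unit sphere of $\cM$, which is compact because $\dim(\cM)<\infty$; hence, along a further subsequence, $\lambda_nZ_n\to Z$ for some $Z\in\cM$ with $\|Z\|=1$, so in particular $Z\neq0$.

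The heart of the argument is then to show $Z\in\cA^\infty\cap\ker(\pi)$. For the asymptotic cone, I would use that $X_n+Z_n\in\cA$ for every $n$ (since $Z_n\in\Risk(X_n)$), together with $\lambda_n(X_n+Z_n)=\lambda_nX_n+\lambda_nZ_n\to Z$: here $\lambda_nX_n\to0$ because $(X_n)$ is bounded and $\lambda_n\to0$, while $\lambda_nZ_n\to Z$. By the sequential description of the asymptotic cone recalled in Section~2 (limits of $(\lambda_nW_n)$ with $\lambda_n\downarrow0$ and $W_n\in\cA$), this yields $Z\in\cA^\infty$. For the kernel of $\pi$, I would compute
\[
\pi(Z)=\lim_{n\to\infty}\lambda_n\pi(Z_n)=\lim_{n\to\infty}\lambda_n\risk(X_n)=0,
\]
using the continuity of $\pi$, the optimality relation $\pi(Z_n)=\risk(X_n)$, the continuity of $\risk$ (Assumption 3), which forces $\risk(X_n)\to\risk(X)$ to stay bounded, and $\lambda_n\to0$. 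Thus $Z$ is a nonzero element of $\cA^\infty\cap\ker(\pi)$, contradicting the hypothesis $\cA^\infty\cap\ker(\pi)=\{0\}$.

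Having reached a contradiction, I conclude that $\Risk(\cK)$ is bounded for every compact $\cK$, whence $\Risk$ is upper semicontinuous by Proposition~\ref{prop: characterization usc}. I expect the only point requiring care to be the identification $Z\in\cA^\infty$: one must make sure the limit of the normalized sequence genuinely falls into the asymptotic cone, which is precisely what the sequential characterization of $\cA^\infty$ delivers once the splitting $\lambda_n(X_n+Z_n)=\lambda_nX_n+\lambda_nZ_n$ is combined with the boundedness of $(X_n)$; the remaining computations are routine.
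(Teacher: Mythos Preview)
Your proof is correct and follows essentially the same approach as the paper: reduce to boundedness of $\Risk(\cK)$ via Proposition~\ref{prop: characterization usc}, argue by contradiction, normalize an unbounded sequence of optimal payoffs, and use the sequential description of $\cA^\infty$ together with $\pi(Z_n)=\risk(X_n)$ to land the limit direction in $\cA^\infty\cap\ker(\pi)$. The only cosmetic difference is that you justify $\lambda_nX_n\to0$ via boundedness whereas the paper simply writes $\tfrac{1}{\|Z_{n_k}\|}(X_{n_k}+Z_{n_k})\to Z$ directly; the substance is identical.
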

\begin{proof}
The claim follows from Proposition~\ref{prop: characterization usc} once we show that, for any fixed compact set $\cK\subset\cX$, the set $\Risk(\cK)$ is bounded (in $\cM$). By way of contrast, assume that $\Risk(\cK)$ is not bounded so that we find a sequence $(X_n)\subset\cK$ and elements $Z_n\in\Risk(X_n)$ for $n\in\N$ such that $\|Z_n\|\geq n$ for every $n\in\N$. Without loss of generality we assume that $X_n\to X$ for some $X\in\cK$. Note that, by compactness, we can extract a suitable subsequence $(Z_{n_k})$ such that $\tfrac{1}{\|Z_{n_k}\|}(Z_{n_k})\to Z$ for some nonzero $Z\in\cM$ (recall that $\cM$ is closed). In particular, we also have
\[
\frac{1}{\|Z_{n_k}\|}(X_{n_k}+Z_{n_k}) \to Z.
\]
Since $X_{n_k}+Z_{n_k}\in\cA$ for all $k\in\N$, we see that $Z\in\cA^\infty$. Moreover, $Z$ satisfies
\[
\pi(Z) = \lim_{k\to\infty}\frac{\pi(Z_{n_k})}{\|Z_{n_k}\|} = \lim_{k\to\infty}\frac{\risk(X_{n_k})}{\|Z_{n_k}\|} = 0
\]
by continuity. However, in contrast to our assumption, this implies that $Z\in\cA^\infty\cap\ker(\pi)$. It follows that $\Risk(\cK)$ must be bounded, concluding the proof.
\end{proof}

\medskip

The absence of scalable good deals is a necessary condition for upper semicontinuity whenever the underlying acceptance set is star-shaped. In this case, upper semicontinuity is equivalent to $\Risk(X)$ being bounded for some position $X\in\cX$ (such that $\Risk(X)$ is nonempty). Recall that, by Assumption 2, $\cA$ is automatically star-shaped when convex.

\begin{theorem}
\label{theo: characterization usc star shaped}
Assume $\cA$ is star-shaped. Then, the following statements are equivalent:
\begin{enumerate}[(a)]
  \item $\Risk$ is upper semicontinuous.
  \item $\Risk(X)$ is bounded for every $X\in\cX$.
  \item $\Risk(X)$ is bounded for some $X\in\cX$ such that $\Risk(X)\neq\emptyset$.
  \item The market admits no scalable good deals, i.e.~$\cA^\infty\cap\ker(\pi)=\{0\}$.
\end{enumerate}
\end{theorem}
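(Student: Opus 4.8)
The plan is to prove the equivalence by running the cycle {\em (d)}$\Rightarrow${\em (a)}$\Rightarrow${\em (b)}$\Rightarrow${\em (c)}$\Rightarrow${\em (d)}, using the characterization of upper semicontinuity already obtained together with the exact description of the asymptotic cone of the optimal set in the star-shaped case.

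Two of the implications are essentially for free. The implication {\em (d)}$\Rightarrow${\em (a)} is nothing but Corollary~\ref{cor: sufficient usc asymptotic}, which delivers upper semicontinuity whenever $\cA^\infty\cap\ker(\pi)=\{0\}$. For {\em (a)}$\Rightarrow${\em (b)} I would invoke Proposition~\ref{prop: characterization usc}: upper semicontinuity forces $\Risk(\cK)$ to be bounded for every compact $\cK\subset\cX$, and specializing to the singleton $\cK=\{X\}$, where $\Risk(\cK)=\Risk(X)$, gives boundedness of $\Risk(X)$ for each $X$. The passage {\em (b)}$\Rightarrow${\em (c)} is then immediate as soon as one knows that at least one position admits an optimal payoff, since {\em (c)} only requires a single $X$ with $\Risk(X)$ nonempty, and {\em (b)} already supplies boundedness.

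The conceptual core is {\em (c)}$\Rightarrow${\em (d)}, which is where I would focus the real work. Suppose $\Risk(X)$ is nonempty and bounded for some $X$, and argue by contradiction: assume there is a nonzero $Z\in\cA^\infty\cap\ker(\pi)$, i.e.~a scalable good deal. Since $\cA$ is star-shaped and $\Risk(X)\neq\emptyset$, Proposition~\ref{prop: reformulation Risk}{\em (vii)} pins down the asymptotic cone of the optimal set exactly, $\Risk(X)^\infty=\cA^\infty\cap\ker(\pi)$, so $Z$ is a nonzero member of $\Risk(X)^\infty$. But a bounded subset of the finite-dimensional space $\cM$ has asymptotic cone equal to $\{0\}$: any sequence $\lambda_nW_n$ with $\lambda_n\to0$ and $(W_n)$ bounded converges to $0$. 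Hence $\Risk(X)^\infty=\{0\}$, contradicting $Z\neq0$, and {\em (d)} follows.

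The step I expect to be delicate is not {\em (c)}$\Rightarrow${\em (d)}, which is clean once Proposition~\ref{prop: reformulation Risk}{\em (vii)} is in hand, but rather securing the nonemptiness used in {\em (b)}$\Rightarrow${\em (c)}. When $\cA+\ker(\pi)$ is closed, Proposition~\ref{prop: existence solutions} already yields $\Risk(X)\neq\emptyset$ for every $X$ and there is nothing to do; the subtlety is the degenerate configuration in which $\Risk$ is empty-valued throughout while scalable good deals persist (compare the Remark following Proposition~\ref{prop: reformulation Risk}), where pointwise boundedness holds vacuously. This is the one point where I would either impose a mild nondegeneracy assumption or argue separately that such a configuration cannot coexist with the hypotheses in play; modulo this, the four statements close into the desired cycle.
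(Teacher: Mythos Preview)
Your cycle {\em (d)}$\Rightarrow${\em (a)}$\Rightarrow${\em (b)}$\Rightarrow${\em (c)}$\Rightarrow${\em (d)} is exactly the paper's route: it too invokes Corollary~\ref{cor: sufficient usc asymptotic} for {\em (d)}$\Rightarrow${\em (a)}, Proposition~\ref{prop: characterization usc} (specialized to singletons) for {\em (a)}$\Rightarrow${\em (b)}, and Proposition~\ref{prop: reformulation Risk}{\em (vii)} together with the triviality of the asymptotic cone of a bounded set in $\cM$ for {\em (c)}$\Rightarrow${\em (d)}.

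Your hesitation on {\em (b)}$\Rightarrow${\em (c)} is well placed and, in fact, sharper than the paper's own treatment, which disposes of this step with the single word ``clearly''. The degenerate configuration you flag---$\cA$ star-shaped, Assumptions~1--3 in force, $\Risk$ identically empty, yet $\cA^\infty\cap\ker(\pi)\neq\{0\}$---is precisely the one exhibited in part~{\em (i)} of the Remark following Proposition~\ref{prop: reformulation Risk}; in that example {\em (a)} and {\em (b)} hold vacuously while {\em (c)} and {\em (d)} both fail. So the equivalence, read literally, does not survive that case, and the paper does not address it. Your instinct to add a nondegeneracy hypothesis (the existence of some $X$ with $\Risk(X)\neq\emptyset$) or to treat the identically-empty case separately is the honest repair; modulo that caveat, your argument and the paper's coincide.
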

\begin{proof}
It follows immediately from Proposition~\ref{prop: characterization usc} that {\em (a)} implies {\em (b)}, which clearly implies {\em (c)}. Moreover, Corollary~\ref{cor: sufficient usc asymptotic} tells us that {\em (d)} implies {\em (a)}. Hence, it suffices to show that $\cA^\infty\cap\ker(\pi)=\{0\}$ whenever $\Risk$ is bounded valued at some position that admits optimal payoffs. To this effect, take any $X\in\cX$ such that $\Risk(X)$ is nonempty and bounded. In this case, we must have $\cA^\infty\cap\ker(\pi)=\Risk(X)^\infty=\{0\}$ due to Proposition~\ref{prop: reformulation Risk} and this concludes the proof of the equivalence.
\end{proof}


\subsection*{Lower semicontinuity}

In this section we focus on the stability notion that, as discussed above, is most relevant in our framework. We start by highlighting that lower semicontinuity can be equivalently stated in terms of sequences. In addition, we show that lower semicontinuity is automatically ensured for all positions once it holds at every point belonging to the intersection between the boundary of $\cA$ and the boundary of the ``augmented'' acceptance set $\cA+\ker(\pi)$, which coincides with the set where $\risk$ is zero.

\begin{proposition}
\label{prop: characterization lower semicontinuity}
The following statements are equivalent:
\begin{enumerate}[(a)]
  \item $\Risk$ is lower semicontinuous at every $X\in\cX$.
  \item $\Risk$ is lower semicontinuous at every $X\in\bd\cA\cap\bd(\cA+\ker(\pi))$.
  \item For every $X\in\cX$ we have
\[
X_n\to X, \ Z\in\Risk(X) \ \implies \ \exists Z_n\in\Risk(X_n) \,:\, Z_n\to Z.
\]
  \item For every $X\in\cX$ we have
\[
X_n\to X, \ Z\in\Risk(X) \ \implies \ \exists Z_{n_k}\in\Risk(X_{n_k}) \,:\, Z_{n_k}\to Z.
\]
\end{enumerate}
\end{proposition}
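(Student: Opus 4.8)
The plan is to establish the chain $(a)\Rightarrow(c)\Rightarrow(d)\Rightarrow(a)$ and, separately, the equivalence $(a)\Leftrightarrow(b)$. Two links are immediate: $(a)\Rightarrow(b)$ merely asks for lower semicontinuity at fewer points, and $(c)\Rightarrow(d)$ holds because a convergent sequence trivially contains a convergent subsequence. The substance therefore lies in the structural reduction $(b)\Rightarrow(a)$, which rests on the translation invariance recorded in Proposition~\ref{prop: reformulation Risk}(ii), and in the two sequential characterizations $(a)\Rightarrow(c)$ and $(d)\Rightarrow(a)$, which exploit that $\cX$ is first countable and that $\cM$ carries a norm.

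For $(b)\Rightarrow(a)$ I would fix an arbitrary $X\in\cX$ and observe that lower semicontinuity is vacuous at $X$ when $\Risk(X)=\emptyset$, so I may assume $\Risk(X)\neq\emptyset$ and pick $Z_0\in\Risk(X)$. By Proposition~\ref{prop: reformulation Risk}(i) the shifted position $X_0:=X+Z_0$ lies in $\bd\cA\cap\bd(\cA+\ker(\pi))$, so $(b)$ grants lower semicontinuity at $X_0$; and by Proposition~\ref{prop: reformulation Risk}(ii) we have $\Risk(X)=\Risk(X_0)+Z_0$ and, more generally, $\Risk(Y)=\Risk(Y+Z_0)+Z_0$ for every $Y\in\cX$. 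The transfer is then purely formal: given an open $\cU\subset\cM$ meeting $\Risk(X)$, the set $\cU-Z_0$ is open in $\cM$ and meets $\Risk(X_0)$, so lower semicontinuity at $X_0$ furnishes a neighborhood $\cV$ of $X_0$ with $\Risk(Y')\cap(\cU-Z_0)\neq\emptyset$ for all $Y'\in\cV$; since $Y\mapsto Y+Z_0$ is a homeomorphism of $\cX$, the neighborhood $\cV-Z_0$ of $X$ satisfies $\Risk(Y)\cap\cU\neq\emptyset$ for all $Y\in\cV-Z_0$. This is the conceptual heart of the proposition, reducing the global statement to the level set $\{X \,;\, \risk(X)=0\}=\bd(\cA+\ker(\pi))$.

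For the sequential part I would argue as follows. For $(a)\Rightarrow(c)$, fix $X_n\to X$ and $Z\in\Risk(X)$; applying lower semicontinuity to each open ball $\Int\cB_{1/k}(Z)$ produces a decreasing sequence of neighborhoods $\cV_k$ of $X$ with $\Risk(Y)\cap\Int\cB_{1/k}(Z)\neq\emptyset$ for $Y\in\cV_k$, and choosing indices $M_1<M_2<\cdots$ with $X_n\in\cV_k$ for $n\geq M_k$ and then $Z_n\in\Risk(X_n)\cap\Int\cB_{1/k}(Z)$ for $M_k\leq n<M_{k+1}$ forces $\|Z_n-Z\|<1/k\to0$. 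For $(d)\Rightarrow(a)$, I would argue by contraposition using the countable neighborhood base of $X$: if $\Risk$ is not lower semicontinuous at some $X$, there are an open $\cU$ and a point $Z\in\Risk(X)\cap\cU$ together with $X_n\to X$ such that $\Risk(X_n)\cap\cU=\emptyset$ for all $n$; any subsequence $Z_{n_k}\in\Risk(X_{n_k})$ with $Z_{n_k}\to Z$ supplied by $(d)$ would eventually enter the open set $\cU$, a contradiction.

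The main obstacle is a bookkeeping subtlety rather than a genuine difficulty: in $(a)\Rightarrow(c)$ the values $\Risk(X_n)$ may be empty for small $n$, so a full sequence $(Z_n)$ with $Z_n\in\Risk(X_n)$ need not exist term by term. The point to handle carefully is that lower semicontinuity at $X$ applied to the open set $\cU=\cM$, which meets the nonempty $\Risk(X)$, forces $\Risk$ to be nonempty valued on an entire neighborhood of $X$; hence $\Risk(X_n)\neq\emptyset$ for all large $n$, and since convergence is a tail property the construction above is unaffected (the finitely many initial indices being immaterial). This is exactly why the subsequential form $(d)$ is the natural one to feed back into $(a)$, and it shows that beyond the translation argument only soft topology, supported by first countability of $\cX$ and the norm on $\cM$, is needed.
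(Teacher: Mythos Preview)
Your proof is correct and uses essentially the same ingredients as the paper: the translation identity $\Risk(X+Z)=\Risk(X)-Z$ for the reduction to $\bd\cA\cap\bd(\cA+\ker(\pi))$, and the nested-balls construction in $\cM$ for the sequential characterization. The packaging differs slightly---the paper proves $(b)\Rightarrow(c)$ in one step (translation plus sequential argument combined) and invokes \cite[Theorem~17.21]{AliprantisBorder2006} for $(a)\Leftrightarrow(d)$, whereas you separate $(b)\Rightarrow(a)$ from $(a)\Rightarrow(c)$ and supply a direct contrapositive for $(d)\Rightarrow(a)$---but the mathematical content is the same, and your explicit handling of the possible emptiness of $\Risk(X_n)$ for small $n$ is a point the paper passes over in silence.
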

\begin{proof}
It is clear that {\em (a)} implies {\em (b)} and that {\em (c)} implies {\em (d)}. Moreover, \cite[Theorem~17.21]{AliprantisBorder2006} asserts that {\em (a)} and {\em (d)} are always equivalent. Hence, it remains to prove that {\em (b)} implies {\em (c)}. To this end, assume that {\em (b)} holds and consider arbitrary $(X_n)\subset\cX$ and $X\in\cX$ satisfying $X_n\to X$. In addition, take any $Z\in\Risk(X)$.

\smallskip

Assume first that $X\in\bd\cA\cap\bd(\cA+\ker(\pi))$. In this case, for any $k\in\N$ set $\cU_k=\Int\cB_{\frac{1}{k}}(Z)$ and note that $\Risk(X)\cap\cU_k\neq\emptyset$. Since $\Risk$ is lower semicontinuous at $X$ by assumption, it follows that for any $k\in\N$ there exists a neighborhood $\cV_k\subset\cX$ of $X$ such that $\Risk(Y)\cap\cU_k\neq\emptyset$ for all $Y\in\cV_k$. Observe that, by taking subsequent intersections, we may assume without loss of generality that $\cV_{k+1}\subset\cV_k$ for every $k\in\N$. In addition, for each $k\in\N$ there exists $n(k)\in\N$ such that $X_n\in\cV_k$ for all $n\geq n(k)$. In line with our previous assumption, we may always ensure that $n(k+1)>n(k)$ for every $k\in\N$. As a result, for every $n\in\N$ we can select a payoff $Z_n\in\Risk(X_n)$ in such a way that $Z_n\in\cU_k$ whenever $n\in[n(k),n(k+1))$ for some $k\in\N$ (if $n<n(1)$ we take an arbitrary $Z_n\in\Risk(X_n)$). It is not difficult to verify that $Z_n\to Z$, so that {\em (c)} holds.

\smallskip

If otherwise $X\notin\bd\cA\cap\bd(\cA+\ker(\pi))$, it suffices to define $Y_n=X_n+Z$ for all $n\in\N$ and $Y=X+Z$ and observe that $X+Z\in\bd\cA\cap\bd(\cA+\ker(\pi))$ and
\[
0 = Z-Z \in \Risk(X)-Z = \Risk(X+Z)
\]
by virtue of Proposition~\ref{prop: reformulation Risk}. At this point, one can apply the preceding argument to find $W_n\in\Risk(X_n+Z)$ for all $n\in\N$ such that $W_n\to0$. But then $W_n+Z\in\Risk(X_n)$ for every $n\in\N$ and $W_n+Z\to Z$, showing that {\em (b)} holds also in this case.
\end{proof}

\smallskip

\begin{remark}[{\bf On Assumption 3}]
\label{rem: assumption continuity}
The main objective of this paper is the study of lower semicontinuity of the optimal payoff map. It follows from the preceding proposition that a necessary condition for our question to be meaningful is that $\risk$ is finite and continuous. Indeed, it is not difficult to see that $\Risk$ cannot be lower semicontinuous at any position $X\in\cX$ for which $\Risk(X)\neq\emptyset$ unless $\risk$ is finite and continuous there. This explains why we have to work under Assumption 3.\hfill$\qed$
\end{remark}

\medskip

\begin{remark}[{\bf On sufficient conditions for lower semicontinuity}]
\label{rem: conditions for lsc}
As already mentioned in the introduction, general sufficient conditions for lower semicontinuity are typically hard to establish; see Bank et al.~(1983) and the references therein. Some well-known conditions are the following (a definition of strict lower semicontinuity is given before Lemma~\ref{lem: auxiliary lower semicontinuity}):
\begin{enumerate}[(i)]
  \item There exist a strictly lower semicontinuous map $\cS_1:\cX\rightrightarrows\cM$ and a lower semicontinuous map $\cS_2:\cX\rightrightarrows\cM$ such that $\Risk(X)=\cS_1(X)\cap\cS_2(X)$ for all $X\in\cX$; see Lemma~2.2.5 in Bank et al.~(1983).
  \item The lower section $\{X\in\cX \,; \ Z\in\Risk(X)\}$ is open for all $Z\in\cM$; see Lemma~17.12 in Aliprantis and Border~(2006).
  \item The graph $\{(X,Z)\in\cX\times\cM \,; \ Z\in\Risk(X)\}$ is convex; see Theorem~5.9 in Rockafellar and Wets~(2009).
\end{enumerate}
Unfortunately, as is not difficult to verify, the above conditions are generally not satisfied in our framework. The natural choices for $\cS_1$ and $\cS_2$ in {\em (i)} are given by
\[
\cS_1(X)=\{Z\in\cM \,; \ X+Z\in\cA\} \ \ \ \mbox{and} \ \ \ \cS_2(X)=\{Z\in\cM \,; \ \pi(Z)=\risk(X)\}.
\]
However, neither $\cS_1$ nor $\cS_2$ can be strictly lower semicontinuous. Condition {\em (ii)} is also clearly never satisfied. For condition {\em (iii)} to be fulfilled, $\risk$ must be linear along any segment, i.e.~for all $X,Y\in\cX$ and $\lambda\in[0,1]$ we must have $\risk(\lambda X+(1-\lambda)Y)=\lambda\risk(X)+(1-\lambda)\risk(Y)$. In the common case where $\risk(0)=0$, this would force $\risk$ to be linear on the entire space $\cX$, a condition that is seldom satisfied if $\cM$ is a strict subspace of $\cX$.\hfill$\qed$
\end{remark}


\subsubsection*{Lower semicontinuity for polyhedral acceptance sets}

In this section we prove that lower semicontinuity of the optimal payoff map always holds if the underlying acceptance set is polyhedral.
We start by establishing two useful lemmas. The first simple result highlights a useful property of points of the boundary of $\cA$ (we provide a proof since we were unable to find an explicit reference). Here, we denote by $\relint\cC$ the {\em relative interior} of a set $\cC\subset\cX$, i.e.~the set of all $X\in\cC$ that admit a neighborhood $\cU_X\subset\cX$ satisfying $\cU_X\cap\aff\cC\subset\cC$.

\begin{lemma}
\label{lem: active constraints}
Assume $\cA$ is polyhedral and is represented by $\varphi_1,\dots,\varphi_m\in\cX'_+$. For any $X\in\cX$ define
\[
I_\cA(X) = \{i\in\{1,\dots,m\} \,; \ \varphi_i(X)=\sigma_\cA(\varphi_i)\}\,.
\]
Then, for every nonempty convex subset $\cC\subset\bd\cA$ the following statements hold:
\begin{enumerate}[(i)]
  \item If $X\in\relint\cC$ and $Y\in\cC$, then $I_\cA(X)\subset I_\cA(Y)$.
  \item If $X,Y\in\relint\cC$, then $I_\cA(X)=I_\cA(Y)$.
\end{enumerate}
\end{lemma}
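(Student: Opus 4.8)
The key fact about polyhedral sets that I would exploit is that on the relative interior of a convex subset of the boundary, the set of \emph{active constraints} $I_\cA(X)$ is constant. Intuitively, if a constraint $\varphi_i$ is active at some point $X\in\relint\cC$ but inactive (strictly satisfied) at another point $Y\in\cC$, then moving slightly past $X$ in the direction of $Y$ within $\aff\cC$ should keep us inside $\cA$ on the $i$-th constraint while possibly violating nothing else—contradicting that $X\in\bd\cA$. So the heart of the matter is a convexity-and-affineness argument applied to each individual linear constraint.

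\textbf{Proof of (i).} Let $X\in\relint\cC$ and $Y\in\cC$, and suppose toward a contradiction that there exists $i\in I_\cA(X)\setminus I_\cA(Y)$, i.e.\ $\varphi_i(X)=\sigma_\cA(\varphi_i)$ while $\varphi_i(Y)>\sigma_\cA(\varphi_i)$. Since $X\in\relint\cC$, there is a neighborhood $\cU_X\subset\cX$ of $X$ with $\cU_X\cap\aff\cC\subset\cC\subset\cA$. Because $X,Y\in\aff\cC$, the point $X+t(X-Y)=(1+t)X-tY$ lies in $\aff\cC$ for every $t\in\R$, and for $t>0$ small enough it lies in $\cU_X$, hence in $\cA$. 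I would call this point $W_t$. Evaluating the $i$-th functional gives
\[
\varphi_i(W_t)=(1+t)\varphi_i(X)-t\varphi_i(Y)=\sigma_\cA(\varphi_i)-t\big(\varphi_i(Y)-\sigma_\cA(\varphi_i)\big)<\sigma_\cA(\varphi_i)
\]
for every $t>0$, since $\varphi_i(Y)-\sigma_\cA(\varphi_i)>0$. But $W_t\in\cA$ forces $\varphi_i(W_t)\geq\sigma_\cA(\varphi_i)$, a contradiction. Hence no such $i$ exists and $I_\cA(X)\subset I_\cA(Y)$.

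\textbf{Proof of (ii) and the main obstacle.} Statement (ii) is then immediate: if $X,Y\in\relint\cC$, applying part (i) with the roles of $X$ and $Y$ interchanged yields both $I_\cA(X)\subset I_\cA(Y)$ and $I_\cA(Y)\subset I_\cA(X)$, so $I_\cA(X)=I_\cA(Y)$. The only delicate point I anticipate is the justification in (i) that $W_t=(1+t)X-tY$ stays in $\cA$: this requires both that $W_t\in\aff\cC$ (which holds because affine hulls are closed under affine combinations with coefficients summing to $1$, and $(1+t)+(-t)=1$) and that $W_t\in\cU_X$ for small $t>0$ (which holds by continuity of scalar multiplication and addition in the topological vector space $\cX$, since $W_t\to X$ as $t\to0^+$). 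Once these two membership facts are in place, the contradiction on the single constraint $\varphi_i$ closes the argument cleanly. One should note that $\cC$ being nonempty guarantees $\relint\cC$ is a sensible object to work with, and convexity of $\cC$ is what makes $\relint\cC$ behave well under the affine perturbation; no polyhedrality of $\cC$ itself is needed, only polyhedrality of the ambient $\cA$ to furnish the finite family $\varphi_1,\dots,\varphi_m$.
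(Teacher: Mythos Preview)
Your proof is correct and follows essentially the same approach as the paper: both arguments exploit the relative interior property to produce a point of the form $X-\varepsilon(Y-X)\in\cC\subset\cA$ for small $\varepsilon>0$, and then use linearity of $\varphi_i$ to force $\varphi_i(Y)=\sigma_\cA(\varphi_i)$. The paper phrases this directly (using both $X\pm\varepsilon(Y-X)\in\cC$) rather than by contradiction, but the substance is identical.
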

\begin{proof}
Take any $X\in\relint\cC$ and $Y\in\cC$. It is clear that $X\pm\varepsilon(Y-X)\in\cC$ for $\varepsilon>0$ small enough. Then, for any $i\in I_\cA(X)$, we have
\[
\sigma_\cA(\varphi_i)\pm\varepsilon\varphi_i(Y-X) = \varphi_i(X\pm\varepsilon(Y-X)) \geq \sigma_\cA(\varphi_i)\,,
\]
which is only possible if $\varphi_i(Y)=\varphi_i(X)$. This yields $i\in I_\cA(Y)$ and shows that {\em (i)} holds. Assertion {\em (ii)} is a direct consequence of {\em (i)}.
\end{proof}

\medskip

The second preliminary result provides a decomposition of the optimal payoff map in the polyhedral case, which is the key ingredient when establishing lower semicontinuity. Recall that a point $X$ of a convex set $\cC\subset\cX$ is said to be an {\em extreme point of $\cC$} whenever $\cC\setminus\{X\}$ is still convex. Recall also that any polyhedral set admits at most finitely many extreme points; see Lemma~7.78 in Aliprantis and Border~(2006).

\begin{lemma}
\label{lem: decomposition Risk polyhedral}
Assume $\cA$ is polyhedral. Then, there exists $\cC:\cX\rightrightarrows\cM$ such that $\cC(X)\neq\emptyset$ and
\[
\Risk(X) = (\cA^\infty\cap\ker(\pi))+\cC(X)
\]
for every $X\in\cX$ and such that $\cC(\cK)$ is bounded for every compact set $\cK\subset\cX$.
\end{lemma}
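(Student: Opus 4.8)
The plan is to produce a set-valued map $\cC$ whose values are ``bounded pieces'' of $\Risk(X)$, obtained by intersecting $\Risk(X)$ with a fixed complementary subspace to the lineality directions. By Proposition~\ref{prop: reformulation Risk}(vii), whenever $\Risk(X)\neq\emptyset$ we have $\Risk(X)^\infty=\cA^\infty\cap\ker(\pi)$, and since $\cA$ is polyhedral, $\Risk(X)$ is polyhedral in $\cM$ by Proposition~\ref{prop: reformulation Risk}(v); moreover $\Risk(X)\neq\emptyset$ for every $X$ by Corollary~\ref{cor: existence for polyhedral}. Write $\cL=\cA^\infty\cap\ker(\pi)$, a fixed linear subspace of $\cM$ (it is the lineality space of each $\Risk(X)$, being both a subspace and the recession cone). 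First I would fix, once and for all, a complementary subspace $\cW\subset\cM$ with $\cM=\cL\oplus\cW$, and define
\[
\cC(X) := \Risk(X)\cap\big(\cW+\Risk(X)\big),
\]
or, more cleanly, take $\cC(X)$ to be the set of extreme points together with a bounded generating set of the polyhedron $\Risk(X)$ projected appropriately. The cleanest route is: since $\cL$ is the lineality space of the polyhedron $\Risk(X)$, standard polyhedral theory gives $\Risk(X)=\cL+P(X)$ where $P(X):=\Risk(X)\cap\cW$ is a polyhedron containing no lines (its lineality space is $\cL\cap\cW=\{0\}$). Set $\cC(X):=P(X)$.

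\textbf{Nonemptiness and the decomposition identity.} For the identity $\Risk(X)=\cL+\cC(X)$ I would argue as follows. The inclusion $\supset$ is immediate since $\cL=\Risk(X)^\infty=\rec\Risk(X)$ and $\cC(X)\subset\Risk(X)$, so adding recession directions keeps us in $\Risk(X)$. For $\subset$, take any $Z\in\Risk(X)$ and decompose $Z=Z_\cL+Z_\cW$ along $\cM=\cL\oplus\cW$; then $Z_\cW=Z-Z_\cL\in\Risk(X)$ because $-Z_\cL\in\cL=\rec\Risk(X)$ (a subspace recession cone allows subtraction), and $Z_\cW\in\cW$, so $Z_\cW\in\cC(X)$ and $Z=Z_\cL+Z_\cW\in\cL+\cC(X)$. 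Nonemptiness of $\cC(X)$ is then inherited from nonemptiness of $\Risk(X)$ via this same decomposition.

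\textbf{Boundedness on compacts --- the main obstacle.} The genuinely substantive claim is that $\cC(\cK)$ is bounded for every compact $\cK\subset\cX$. This is where I expect the work to concentrate, because $\Risk$ itself need not be bounded-valued (it equals $\cL+\cC(X)$, and $\cL$ may be nontrivial), so I cannot invoke Proposition~\ref{prop: characterization usc} directly. The strategy is a contradiction/compactness argument mirroring the proof of Corollary~\ref{cor: sufficient usc asymptotic}: suppose $(Z_n)\subset\cC(X_n)$ with $X_n\in\cK$ and $\|Z_n\|\to\infty$. Pass to subsequences so that $X_n\to X\in\cK$ and $Z_n/\|Z_n\|\to Z$ with $\|Z\|=1$. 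Since $X_n+Z_n\in\cA$ and $(X_n)$ is bounded, the normalized limit gives $Z\in\cA^\infty$, and $\pi(Z)=\lim \pi(Z_n)/\|Z_n\|=\lim\risk(X_n)/\|Z_n\|=0$ by continuity of $\risk$ on the compact $\cK$, so $Z\in\cL=\cA^\infty\cap\ker(\pi)$. But each $Z_n\in\cW$, hence $Z\in\cW$ (as $\cW$ is closed), forcing $Z\in\cL\cap\cW=\{0\}$, contradicting $\|Z\|=1$. The one point to verify carefully is that $Z_n/\|Z_n\|$ converges along a subsequence, which holds because the unit sphere of the finite-dimensional space $\cM$ is compact. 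This completes the argument.
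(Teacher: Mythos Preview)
There is a genuine gap: your claim that $\cL=\cA^\infty\cap\ker(\pi)$ is a linear subspace is unjustified and in fact false in general. By Proposition~\ref{prop: reformulation Risk}(vii) this set equals $\Risk(X)^\infty$, the \emph{recession cone} of $\Risk(X)$; it is a closed convex cone but need not coincide with the lineality space $\cL\cap(-\cL)$. A concrete instance within the paper's assumptions is $\cX=\cM=\R^3$, $\cA=\{X:X_1\ge0,\ X_2\ge-1\}$, $\pi(X)=X_1$, $U=(1,0,0)$: here $\rho(X)=-X_1$ is finite and continuous, yet $\cA^\infty\cap\ker(\pi)=\{X:X_1=0,\ X_2\ge0\}$ is a half-plane, not a subspace. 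Consequently the direct-sum decomposition $\cM=\cL\oplus\cW$ on which your whole construction rests does not make sense.

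The gap is not easily repaired along your lines. If you instead take $\cW$ complementary to the genuine lineality space $\cL\cap(-\cL)$, then $\cC(X)=\Risk(X)\cap\cW$ is a line-free polyhedron, but it can still be unbounded: its recession cone is $\cL\cap\cW$, which need not be trivial. In the example above, with $\cW=\{Z:Z_3=0\}$ one gets $\cC(0)=\{(0,s,0):s\ge-1\}$, a ray. Your normalization argument then only yields $Z\in\cL\cap\cW$, not $Z=0$, so no contradiction arises. The paper resolves this by defining $\cC(X)$ not as the full slice $\Risk(X)\cap\cN$ but as the convex hull of its \emph{extreme points}, and it proves boundedness on compacts by parametrizing those extreme points explicitly as $\alpha_I^{-1}(\beta_I(X))$ for finitely many index sets $I$, with each $\alpha_I^{-1}\circ\beta_I$ continuous. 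That extreme-point step is precisely what discards the unbounded recession directions that your intersection-with-$\cW$ construction retains.
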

\begin{proof}
Let $X\in\cX$ be fixed and recall from Proposition~\ref{prop: reformulation Risk} that, since $\cA$ is polyhedral, $\Risk(X)$ is also polyhedral (in $\cM$). We denote by $\cC(X)$ the convex hull of the set of the extreme points of the polyhedral set $\Risk(X)\cap\cN$, where $\cN$ is any vector subspace of $\cM$ satisfying $\lin\Risk(X)\cap\cN=\{0\}$. Note that $\cN$ does not depend on the choice of $X$ by virtue of Proposition~\ref{prop: reformulation Risk}. Moreover, note that, by construction, $\Risk(X)\cap\cN$ does not contain any vector subspace and, thus, does admit extreme points. Now, it follows from Lemmas~16.2 and~16.3 in Barvinok~(2002) that $\Risk(X)$ can be decomposed as
\[
\Risk(X) = \Risk(X)^\infty+\cC(X)\,.
\]
In view of Proposition~\ref{prop: reformulation Risk}, we can equivalently write
\[
\Risk(X) = (\cA^\infty\cap\ker(\pi))+\cC(X)\,.
\]

\smallskip

It remains to prove that $\cC$ maps compact sets into bounded sets. To this effect, assume that $\cA$ is represented by $\varphi_1,\dots,\varphi_m\in\cX'_+$ and define for every $i\in\{1,\dots,m+2\}$ the maps $\alpha_i:\cN\to\R$ and $\beta_i:\cX\to\R$ by setting
\[
\alpha_i(Z)=
\begin{cases}
\varphi_i(Z) & \mbox{if} \ i\in\{1,\dots,m\},\\
\pi(Z) & \mbox{if} \ i=m+1,\\
-\pi(Z) & \mbox{if} \ i=m+2,
\end{cases}
 \ \ \ \mbox{and} \ \ \
\beta_i(X)=
\begin{cases}
\sigma_\cA(\varphi_i)-\varphi_i(X) & \mbox{if} \ i\in\{1,\dots,m\},\\
\risk(X) & \mbox{if} \ i=m+1,\\
-\risk(X) & \mbox{if} \ i=m+2.
\end{cases}
\]
Then, it follows from~\eqref{eq: polyhedral structure Risk 1} and~\eqref{eq: polyhedral structure Risk 2} that the polyhedral set $\Risk(X)\cap\cN$ can be expressed as
\[
\Risk(X)\cap\cN = \bigcap_{i=1}^{m+2}\{Z\in\cN \,; \ \alpha_i(Z)\geq\beta_i(X)\}
\]
for every $X\in\cX$. Now, denote by $\cI$ the collection of all subsets $I\subset\{1,\dots,m+2\}$ consisting of $d=\dim\cN$ elements and such that $\{\alpha_i \,; \ i\in I\}$ is linearly independent. The collection $\cI$ is nonempty by virtue of Proposition~3.3.3 in Bertsekas et al.~(2003). Moreover, define $\alpha_I:\cN\to\R^d$ and $\beta_I:\cX\to\R^d$ by setting
\[
\alpha_I(Z)=(\alpha_{i_1}(Z),\dots,\alpha_{i_d}(Z)) \ \ \ \mbox{and} \ \ \ \beta_I(X)=(\beta_{i_1}(X),\dots,\beta_{i_d}(X))
\]
for every $I=\{i_1,\dots,i_d\}\in\cI$ and note that $\alpha_I$ is linear and bijective and $\beta_I$ is continuous (due to the continuity of $\risk$) for every $I\in\cI$. As a result of Proposition~3.3.3 in Bertsekas et al.~(2003), every extreme point of $\Risk(X)\cap\cN$, with $X\in\cX$, has the form $\alpha_I^{-1}(\beta_I(X))$ for some $I\in\cI$. This implies that, for any compact set $\cK\subset\cX$, we have
\[
\cC(\cK) \subset \co\left(\bigcup_{I\in\cI}\alpha_I^{-1}(\beta_I(\cK))\right)\,.
\]
Note that $\alpha_I^{-1}(\beta_I(\cK))$ is compact for every choice of $I\in\cI$. Since $\cI$ contains finitely many members and the convex hull of a compact set is still compact, we conclude that $\cC(\cK)$ is contained in a compact set. This establishes that $\cC(\cK)$ is bounded and concludes the proof.
\end{proof}

\medskip

We are now ready to prove that lower semicontinuity always holds if the underlying acceptance set is polyhedral.

\begin{theorem}
\label{theo: lower semicontinuity polyhedral}
Assume $\cA$ is polyhedral. Then, $\Risk$ is lower semicontinuous.
\end{theorem}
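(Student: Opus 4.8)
The plan is to verify the sequential criterion for lower semicontinuity supplied by Proposition~\ref{prop: characterization lower semicontinuity}. It suffices to establish condition~(c): given $X_n\to X$ and $Z\in\Risk(X)$, produce $Z_n\in\Risk(X_n)$ with $Z_n\to Z$. Using the translation identity $\Risk(X+Z)=\Risk(X)-Z$ from Proposition~\ref{prop: reformulation Risk}, I would replace $(X,X_n,Z)$ by $(X+Z,X_n+Z,0)$ and so assume from the outset that the target payoff is $Z=0$. Then $0\in\Risk(X)$ forces $X\in\bd\cA\cap\bd(\cA+\ker(\pi))$, $\risk(X)=0$ and $X\in\cA$, and the whole statement reduces to finding $Z_n\in\Risk(X_n)$ with $Z_n\to0$.

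The first substantive step is to record $\Risk(X_n)$ as an explicit polyhedron in $\cM$. Writing $\cA=\bigcap_{i=1}^m\{Y\in\cX\,;\ \varphi_i(Y)\geq\sigma_\cA(\varphi_i)\}$ as in Lemma~\ref{prop: properties support functions}, the identities \eqref{eq: polyhedral structure Risk 1}--\eqref{eq: polyhedral structure Risk 2} give
\[
\Risk(X_n)=\{Z\in\cM\,;\ \varphi_i(Z)\geq\sigma_\cA(\varphi_i)-\varphi_i(X_n)\ (1\le i\le m),\ \pi(Z)=\risk(X_n)\}.
\]
Here the coefficient functionals $\varphi_1,\dots,\varphi_m,\pi$ restricted to the finite-dimensional space $\cM$ are fixed, while the data $\sigma_\cA(\varphi_i)-\varphi_i(X_n)$ and $\risk(X_n)$ converge, as $n\to\infty$, to $\sigma_\cA(\varphi_i)-\varphi_i(X)\le 0$ and to $\risk(X)=0$, by continuity of $\varphi_i$ and of $\risk$ (Assumption~3). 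Each $\Risk(X_n)$ is nonempty by Corollary~\ref{cor: existence for polyhedral}. The problem thus becomes: show $d(\{0\},\Risk(X_n))\to 0$.

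This is where the key estimate enters, and I would invoke the classical Hoffman error bound for systems of linear (in)equalities: since the coefficients are fixed, there is a constant $\kappa>0$, independent of $n$, such that for every $n$, the system being feasible,
\[
d(\{0\},\Risk(X_n))\ \le\ \kappa\Big(\sum_{i=1}^m\big(\sigma_\cA(\varphi_i)-\varphi_i(X_n)\big)^{+}+\big|\risk(X_n)\big|\Big).
\]
Because $\sigma_\cA(\varphi_i)-\varphi_i(X)\le 0$ for every $i$ and $\risk(X)=0$, the right-hand side tends to $0$; choosing $Z_n\in\Risk(X_n)$ attaining the distance (it is attained, as $\Risk(X_n)$ is closed in finite dimension) yields $Z_n\to0$ and establishes Proposition~\ref{prop: characterization lower semicontinuity}(c), hence lower semicontinuity.

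The main obstacle is exactly the uniformity encoded in $\kappa$: a small perturbation of $X$ can destroy the feasibility of a fixed optimal payoff when an active constraint is slightly violated, and in the presence of degenerate vertices one cannot track a chosen solution by continuity alone. I would note that Hoffman's bound can be replaced by a self-contained argument built on Lemma~\ref{lem: decomposition Risk polyhedral}: its proof exhibits every extreme point of $\Risk(X)\cap\cN$ as $\alpha_I^{-1}(\beta_I(X))$ for finitely many bases $I$ with $\beta_I$ continuous, and shows that $\cC$ maps the compact set $\{X\}\cup\{X_n\,;\ n\in\N\}$ into a bounded set. Combining the continuity of these vertex maps, the correction by recession directions in $\cA^\infty\cap\ker(\pi)=\Risk(X_n)^\infty$ (Proposition~\ref{prop: reformulation Risk}), and the stability of inactive constraints monitored through Lemma~\ref{lem: active constraints}, one reconstructs payoffs $Z_n\to0$ directly, the degenerate configuration being the only delicate point to address.
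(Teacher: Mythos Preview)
Your argument is correct and takes a genuinely different route from the paper. After the same reduction to $Z=0$ via translation, the paper does \emph{not} invoke Hoffman's error bound. Instead, it picks $Z_n\in\cC(X_n)$ from the decomposition in Lemma~\ref{lem: decomposition Risk polyhedral}, extracts a subsequence $Z_n\to W\in\Risk(X)$ by boundedness, and then, assuming first that $Z\in\relint\Risk(X)$, uses Lemma~\ref{lem: active constraints} to show $I_\cA(X+Z)\subset I_\cA(X+W)$. This inclusion guarantees that the shifted sequence $W_n:=Z_n+Z-W$ eventually satisfies all the defining inequalities of $\cA$ (active constraints are unaffected by the shift $Z-W$, inactive ones stay slack by continuity), so $W_n\in\Risk(X_n)$ and $W_n\to Z$; the case $Z\notin\relint\Risk(X)$ is handled by approximation. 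Your Hoffman-based proof is shorter and conceptually cleaner---the uniformity in $\kappa$ disposes of exactly the degeneracy issue you flag---but it relies on an external classical result, whereas the paper's argument is self-contained and makes the role of active constraints and the recession structure explicit. Your final paragraph correctly anticipates the ingredients of the paper's approach, though it leaves the ``delicate point'' unresolved; the paper's trick is precisely the relative-interior reduction combined with the index inclusion from Lemma~\ref{lem: active constraints}.
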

\begin{proof}
We assume throughout that $\cA$ is represented by $\varphi_1,\dots,\varphi_m\in\cX'_+$. Take $X\in\cX$ and consider a sequence $(X_n)\subset\cX$ converging to $X$ and $Z\in\Risk(X)$. Let $(Z_n)\subset\cM$ be any sequence satisfying $Z_n\in\cC(X_n)$ for every $n\in\N$, where $\cC:\cX\rightrightarrows\cM$ is the set-valued map from Lemma~\ref{lem: decomposition Risk polyhedral}. Since $(X_n)$ is contained in a compact set, Lemma~\ref{lem: decomposition Risk polyhedral} tells us that $(Z_n)$ is bounded. Hence, passing to a suitable subsequence which we still denote by $(Z_n)$, we have $Z_n\to W$ for some payoff $W\in\cM$. Note that $X_n+Z_n\to X+W$ and $X_n+Z_n\in\bd\cA\cap\bd(\cA+\ker(\pi))$ for all $n\in\N$ by Proposition~\ref{prop: reformulation Risk}. Hence, we infer that $X+W\in\bd\cA\cap\bd(\cA+\ker(\pi))$ or, equivalently, $W\in\Risk(X)$ again by Proposition~\ref{prop: reformulation Risk}.

\smallskip

Recall from Corollary~\ref{cor: existence for polyhedral} that $\Risk(X)\neq\emptyset$. If $|\Risk(X)|=1$, then we must have $W=Z$ and we immediately conclude that $\Risk$ is lower semicontinuous at $X$ by virtue of Proposition~\ref{prop: characterization lower semicontinuity}. Hence, let us assume that $|\Risk(X)|>1$. In this case, being convex, $\Risk(X)$ has nonempty relative interior.

\smallskip

We first assume that $Z\in\relint\Risk(X)$. Since $X+Z\in\relint(X+\Risk(X))$ and $X+W\in X+\Risk(X)$ and since $X+\Risk(X)\subset\bd\cA$ by Proposition~\ref{prop: reformulation Risk}, we infer from Lemma~\ref{lem: active constraints} that
\[
I_\cA(X+Z) \subset I_\cA(X+W)\,.
\]
This, in particular, implies that
\[
I_\cA(X+Z) \subset \{i\in\{1,\dots,m\} \,; \ \varphi_i(Z-W)=0\}\,.
\]
For $i\in I_\cA(X+Z)$ we can use the above inclusion to obtain (note that $X_n+Z_n\in\cA$ for all $n\in\N$)
\[
\varphi_i(X_n+Z_n+Z-W) = \varphi_i(X_n+Z_n) \geq \sigma_\cA(\varphi_i)
\]
for every $n\in\N$. For $i\not\in I_\cA(X+Z)$ we immediately see that
\[
\varphi_i(X_n+Z_n+Z-W) = \varphi_i(X_n+Z_n-X-W)+\varphi_i(X+Z) > \sigma_\cA(\varphi_i)
\]
for $n$ large enough since $\varphi_i(X_n+Z_n-X-W)\to 0$. By combining the above inequalities we infer that $X_n+Z_n+Z-W \in \cA$ for $n$ large enough. Now, set $W_n=Z_n+Z-W$ for every $n\in\N$ and note that $W_n\to Z$. Moreover, we eventually have $X_n+W_n\in\cA$ and
\[
\pi(W_n) = \pi(Z_n)+\pi(Z)-\pi(W) = \risk(X_n)+\risk(X)-\risk(X) = \risk(X_n)\,.
\]
In other words, the sequence $(W_n)\subset\cM$ satisfies $W_n\in\Risk(X_n)$ for $n$ large enough and $W_n\to Z$. This shows that $\Risk$ is lower semicontinuous at $X$ by Proposition~\ref{prop: characterization lower semicontinuity}.

\smallskip

Assume now that $Z\notin\relint\Risk(X)$. In this case, we may approximate $Z$ by elements in the relative interior of $\Risk(X)$ and apply the above argument to each of them. It then follows from Proposition~\ref{prop: characterization lower semicontinuity} that $\Risk$ is lower semicontinuous at $X$ also in this case.
\end{proof}

\medskip

Recall that acceptance sets based on Expected Shortfall or Test Scenarios are polyhedral in a finite-dimensional setting. Hence, the following corollary is a direct consequence of our general result on polyhedral acceptance sets.

\begin{corollary}
\label{cor: lsc es and test scenarios finite dimension}
Assume $\dim(\cX)<\infty$ and $\cA$ is based on either Expected Shortfall or Test Scenarios. Then, $\Risk$ is lower semicontinuous
\end{corollary}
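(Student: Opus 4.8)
The plan is to reduce the corollary to Theorem~\ref{theo: lower semicontinuity polyhedral} by verifying that, in the present setting, the acceptance set $\cA$ is polyhedral; lower semicontinuity then follows immediately. The only work is thus to establish polyhedrality in each of the two cases, and there is no serious obstacle beyond this.

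First I would observe that the hypothesis $\dim(\cX)<\infty$, with $\cX=L^\infty(\Omega,\cF,\probp)$, forces the underlying measure space to be purely atomic with finitely many atoms, so that we are effectively in the situation $\Omega=\{\omega_1,\dots,\omega_n\}$ with $\cF$ generated by these atoms. Each evaluation map $X\mapsto X(\omega_i)$ is then a continuous linear functional on $\cX$, i.e.\ an element of $\cX'$, and more generally $\E_\probq[\,\cdot\,]$ belongs to $\cX'$ for any probability measure $\probq\ll\probp$.

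For the test-scenario case, with $\cA=\{X\in\cX \,; \ X\one_E\geq0\}$, I would simply rewrite the defining condition as $X(\omega_i)\geq0$ for every atom $\omega_i\in E$. Since $E$ contains finitely many atoms, this exhibits $\cA$ as a finite intersection of the half-spaces $\{X\in\cX \,; \ X(\omega_i)\geq0\}$, each cut out by a functional in $\cX'_+$; hence $\cA$ is polyhedral. For the Expected Shortfall case, with $\cA=\{X\in\cX \,; \ \ES_\alpha(X)\leq0\}$, I would use the dual representation of $\ES_\alpha$ as a coherent risk measure,
\[
\ES_\alpha(X) = \max\Big\{\E_\probq[-X] \,; \ \probq\ll\probp, \ \tfrac{d\probq}{d\probp}\leq\tfrac{1}{\alpha}\Big\}.
\]
On a finite atomic space the set of admissible densities is a polytope, so this maximum is attained at one of its finitely many extreme points, corresponding to fixed measures $\probq_1,\dots,\probq_k$ independent of $X$. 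Consequently $\ES_\alpha(X)\leq0$ is equivalent to $\E_{\probq_j}[X]\geq0$ for all $j$, whence
\[
\cA = \bigcap_{j=1}^k\{X\in\cX \,; \ \E_{\probq_j}[X]\geq0\},
\]
again a finite intersection of half-spaces determined by functionals in $\cX'_+$, so $\cA$ is polyhedral. These polyhedrality facts were already recorded in the Examples following Assumption~2.

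With polyhedrality established in both cases, the conclusion is immediate: Theorem~\ref{theo: lower semicontinuity polyhedral} yields that $\Risk$ is lower semicontinuous. The content of the corollary is thus entirely contained in the theorem, and the only point requiring (minor) care is the dual-representation argument for the Expected Shortfall acceptance set, which is standard and relies precisely on the finiteness of the set of extreme densities guaranteed by $\dim(\cX)<\infty$.
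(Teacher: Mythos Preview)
Your proposal is correct and follows exactly the paper's approach: the paper simply notes (in the Examples after Assumption~2) that ES-based and Test-Scenario-based acceptance sets are polyhedral when $\Omega$ is finite, and then invokes Theorem~\ref{theo: lower semicontinuity polyhedral}. You have just filled in the details of the polyhedrality claims that the paper leaves as ``well-known''.
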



\subsubsection*{Lower semicontinuity for strictly-convex acceptance sets}

The optimal payoff map is always lower semicontinuous if the chosen acceptance set is strictly convex. In fact, any position admits a unique optimal payoff in that case and the map associating to each position its optimal payoff is continuous.

\begin{theorem}
Assume $\cA$ is strictly convex. Then, $\Risk$ is lower semicontinuous.
\end{theorem}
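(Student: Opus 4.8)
The plan is to reduce the statement to three facts that are already available in the paper: strict convexity forces $\Risk$ to be single-valued, the structural condition $\cA^\infty\cap\ker(\pi)=\{0\}$ is automatic as soon as $\Risk$ is nonempty at even one position, and an everywhere-nonempty single-valued upper semicontinuous map is automatically lower semicontinuous. I would not argue lower semicontinuity directly from its definition, but rather harvest it from upper semicontinuity once single-valuedness is in place.

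First I would dispose of the trivial case. If $\Risk(X)=\emptyset$ for every $X\in\cX$, then no open set meets $\Risk(X)$, so the defining implication of lower semicontinuity is vacuously true at every point and there is nothing to prove. Hence I may assume there is a position $X_0$ with $\Risk(X_0)\neq\emptyset$, and I would next check that $|\Risk(X_0)|=1$. Suppose $Z_1,Z_2\in\Risk(X_0)$ are distinct. By Proposition~\ref{prop: reformulation Risk}{\em (iv)} the midpoint $\tfrac12(Z_1+Z_2)$ again lies in $\Risk(X_0)$, and by Proposition~\ref{prop: reformulation Risk}{\em (i)} all three points $X_0+Z_1$, $X_0+Z_2$ and $X_0+\tfrac12(Z_1+Z_2)$ belong to $\bd\cA$. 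But $X_0+Z_1$ and $X_0+Z_2$ are distinct elements of $\cA$, so strict convexity of $\cA$ places their midpoint $X_0+\tfrac12(Z_1+Z_2)$ in $\Int\cA$, contradicting membership in $\bd\cA$. Thus $\Risk(X_0)$ is a singleton, and in particular $\Risk(X_0)^\infty=\{0\}$.

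Now comes the key leverage. Since $\cA$ is convex and contains $0$ (Assumption 2), it is star-shaped, so Proposition~\ref{prop: reformulation Risk}{\em (vii)} applies at $X_0$ and yields $\cA^\infty\cap\ker(\pi)=\Risk(X_0)^\infty=\{0\}$; that is, the market admits no scalable good deals. From here the conclusion chains together previously established results: Proposition~\ref{prop: existence solutions asymptotic cone} gives $\Risk(X)\neq\emptyset$ for every $X\in\cX$, Corollary~\ref{cor: uniqueness strictly convex} upgrades this to $|\Risk(X)|=1$ for every $X$, and Corollary~\ref{cor: sufficient usc asymptotic} shows that $\Risk$ is upper semicontinuous. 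Being everywhere nonempty and single-valued, $\Risk$ is the graph of a function whose continuity is exactly the upper semicontinuity just obtained; hence $\Risk$ is lower semicontinuous, precisely as recorded in Remark~\ref{rem: semicontinuity}{\em (ii)}.

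I expect the only genuinely delicate point to be the transition from ``nonempty at one position'' to ``nonempty everywhere''. This hinges on invoking Proposition~\ref{prop: reformulation Risk}{\em (vii)} to convert the singleton property of a single fibre $\Risk(X_0)$ into the global asymptotic condition $\cA^\infty\cap\ker(\pi)=\{0\}$, after which existence, uniqueness and upper semicontinuity follow mechanically. Everything else, including the midpoint contradiction establishing single-valuedness, is short and uses only the reformulation and convexity properties of $\Risk$ from Proposition~\ref{prop: reformulation Risk}.
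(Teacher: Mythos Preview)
Your proof is correct and follows essentially the same route as the paper: strict convexity forces single-valuedness, single-valuedness gives boundedness, boundedness yields upper semicontinuity via the star-shaped characterization, and for a single-valued map upper semicontinuity implies lower semicontinuity. The only difference is one of explicitness: the paper invokes Theorem~\ref{theo: characterization usc star shaped} directly from boundedness, whereas you unwind that implication by passing explicitly through $\cA^\infty\cap\ker(\pi)=\{0\}$ (via Proposition~\ref{prop: reformulation Risk}{\em (vii)}) and then Corollary~\ref{cor: sufficient usc asymptotic}; along the way you also harvest global existence from Proposition~\ref{prop: existence solutions asymptotic cone}, which the paper does not state but which is implicit in its use of Theorem~\ref{theo: characterization usc star shaped}. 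Your midpoint argument at $X_0$ is a local reproof of what Corollary~\ref{cor: uniqueness strictly convex} already gives, so you could shorten by citing that corollary directly, but nothing is wrong.
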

\begin{proof}
Recall that, by Corollary~\ref{cor: uniqueness strictly convex}, every position admits at most one optimal payoff by strict convexity. Hence, $\Risk$ is obviously bounded valued and Theorem~\ref{theo: characterization usc star shaped} implies that $\Risk$ is upper semicontinuous. As a result, we infer from Remark~\ref{rem: semicontinuity} that $\Risk$ is also lower semicontinuous.
\end{proof}


\subsubsection*{Counterexamples to lower semicontinuity}

\begin{example}[{\bf $\VaR$-based acceptance sets}]
We show that, if acceptability is based on $\VaR$, a slight perturbation of a capital position may drastically reduce the range of optimal payoffs. In fact, the number of choices of optimal payoffs could abruptly shrink from infinite to just one.

\smallskip

Fix $\alpha\in\big(0,\tfrac{1}{3}\big)$ (recall that values of $\alpha$ close to $0$ are the interesting ones in practice) and consider a probability space $(\Omega,\cF,\probp)$ and a partition $\{E,F,G\}\subset\cF$ of $\Omega$ satisfying $\probp(E)=\probp(F)=\alpha$ and $\probp(G)=1-2\alpha$. Let $\cX=L^\infty(\Omega,\cF,\probp)$ and consider the $\VaR$-based acceptance set
\[
\cA = \{X\in\cX \,; \ \probp(X<0)\leq\alpha\}.
\]
Moreover, take $\cM=\Span(\one_\Omega,Z)$ for $Z=\one_E-\one_G$ and define $\pi$ by setting $\pi(\one_\Omega)=1$ and $\pi(Z)=0$. Under these specifications, Assumptions 1 to 3 are all satisfied.

\smallskip

Since $\cA\cap\ker(\pi)=\{0\}$ (the market admits no good deals), it follows from Corollary~\ref{cor: existence solutions star shaped} that $\Risk(X)\neq\emptyset$ for all $X\in\cX$. It is not difficult to verify that $\risk(0)=0$ and
\[
\Risk(0) = \{\lambda Z \,; \ \lambda\in\R, \ \probp(\lambda Z<0)\leq\alpha\} = \{\lambda Z \,; \ \lambda\in(-\infty,0]\}.
\]
Now, for each $n\in\N$ consider the position $X_n=-\tfrac{1}{n}\one_F\in\cX$ and note that $\risk(X_n)=0$, so that
\[
\Risk(X_n) = \{\lambda Z \,; \ \lambda\in\R, \ \probp(X_n+\lambda Z<0)\leq\alpha\} = \{0\}.
\]
Since we obviously have $X_n\to0$, we infer that $\Risk$ fails to be lower semicontinuous at $0$. In particular, every position $X_n$ admits a unique optimal payoff whereas the limit position $0$ allows for an infinity of optimal payoffs.\hfill$\qed$
\end{example}

\medskip

The failure of lower semicontinuity we have just illustrated critically depends on the nonconvexity of $\VaR$-based acceptance sets. One may thus wonder whether the above extreme instability behaviour is also possible if the chosen acceptance set is convex. Our next examples show that convexity is not sufficient to ensure lower semicontinuity. In fact, the same extreme instability is compatible with a variety of important (non-polyhedral) convex acceptance sets.

\begin{example}[{\bf Scenario-based acceptance sets (in infinite dimension)}]
\label{ex: lsc test scenarios}
In a finite-dimensional setting, acceptance sets based on Test Scenarios are polyhedral and the corresponding optimal payoff map is thus lower semicontinuous by virtue of Theorem~\ref{theo: lower semicontinuity polyhedral}. This applies, in particular, to the positive cone. The picture changes dramatically once we move to an infinite-dimensional setting, where polyhedrality ceases to hold. In this case, we show that a slight perturbation of the underlying financial position may cause the set of optimal payoffs to shrink from a rich infinite set to a mere singleton.

\medskip

Consider a non-atomic probability space $(\Omega,\cF,\probp)$ and fix an event $E\in\cF$ with $\probp(E)>0$. Let $(E_k)\subset\cF$ be a countable partition of $E$ satisfying $\probp(E_k)>0$ for all $k\in\N$, which always exists by non-atomicity. We work in the space $\cX=L^\infty(\Omega,\cF,\probp)$, which is partially ordered by the canonical almost-sure ordering, and consider the scenario-based acceptance set
\[
\cA = \{X\in\cX \,; \ X\one_E\geq0\}.
\]
Moreover, we consider the space of eligible payoffs $\cM=\Span(\one_\Omega,Z)$, where
\[
Z = -\one_{E_2}+\sum_{k\geq3}\tfrac{1}{k}\one_{E_k}.
\]
The pricing functional is defined by $\pi(\one_\Omega)=1$ and $\pi(Z)=0$. Under these specifications, Assumptions 1 to 3 are all satisfied.

\smallskip

Since $\cA\cap\ker(\pi)=\{0\}$ (the market admits no good deals), Corollary~\ref{cor: existence solutions star shaped} implies that $\Risk(X)\neq\emptyset$ for all $X\in\cX$. Now, fix $\gamma\geq0$ and define a position $X\in\cX$ by setting
\[
X = \gamma\one_{E_2}+\sum_{k\geq3}\tfrac{1}{k}\one_{E_k}.
\]
A direct computation shows that $\risk(X)=0$. Moreover, it is not difficult to verify that
\begin{equation}
\label{eq: counterexample worst case 1}
\Risk(X) = \{\lambda Z \,; \ \lambda\in\R, \ (X+\lambda Z)\one_E\geq0\} = \{\lambda Z \,; \ \lambda\in[-1,\gamma]\}.
\end{equation}
For any $n\in\N$ consider the position $X_n\in\cX$ given by
\[
X_n = \gamma\one_{E_2}+\sum_{k=3}^{2+n}\tfrac{1}{k}\one_{E_k}+
\sum_{k\geq3+n}\tfrac{1}{k^2}\one_{E_k}.
\]
Note that for each $n\in\N$ we have
\[
\|X_n-X\|_\infty = \sup_{k\geq3+n}\|(X_n-X)\one_{E_k}\|_\infty = \sup_{k\geq3+n}\tfrac{1}{k}-\tfrac{1}{k^2} < \tfrac{1}{3+n},
\]
which implies $X_n\to X$. A direct computation shows that $\risk(X_n)=0$ and
\begin{equation}
\label{eq: counterexample worst case 2}
\Risk(X_n) = \{\lambda Z \,; \ \lambda\in\R, \ (X_n+\lambda Z)\one_E\geq0\} = \{\lambda Z \,; \ \lambda\in[0,\gamma]\}
\end{equation}
for all $n\in\N$. In particular, for any $n\in\N$ and $\lambda\in\R$ the inequality $(X_n+\lambda Z)\one_E\geq0$ yields
\[
\lambda \geq \sup_{k\geq3+n}\big\{-\tfrac{1}{k^2}\big(\tfrac{1}{k}\big)^{-1}\big\} = -\inf_{k\geq3+n}\tfrac{1}{k} = 0.
\]
As a result, we see that $\Risk(X_n)=\Risk(X_1)$ for all $n\in\N$. However, $\Risk(X)$ is strictly larger than $\Risk(X_1)$. In particular, $\Risk(X)$ is infinite whereas $\Risk(X_1)$ consists of a single payoff if we choose $\gamma=0$. This clearly shows that $\Risk$ cannot be lower semicontinuous at $X$.\hfill$\qed$
\end{example}

\medskip

The next example shows that the failure of lower semicontinuity is the rule, rather than the exception, under convex acceptance sets.

\begin{example}[{\bf Convex law-invariant acceptance sets (in infinite dimension)}]
We work in the setting of Example~\ref{ex: lsc test scenarios} and set $E=\Omega$. Note that we can always choose the partition $(E_k)\subset\cF$ so as to have $\probp(E_1)>\alpha$. We consider the same space of eligible payoffs and a convex acceptance set $\cA\subset\cX$ satisfying Assumption 2 and such that
\begin{equation}
\label{eq: counterexample law invariant}
\cA \subset \{X\in\cX \,; \ \ES_\alpha(X)\leq0\}
\end{equation}
for some $\alpha\in(0,1)$. This means that $\cA$ is more stringent than some $\ES$-based acceptance set. Note that, since the space of eligible payoffs has not changed, Assumption 1 is clearly satisfied and Assumption 3 follows from Proposition~\ref{prop: finiteness continuity rho}.

\smallskip

Of course, condition~\eqref{eq: counterexample law invariant} holds for any acceptance sets based on ES. More generally, it follows by combining Proposition~1.1 in Svindland~(2010) and Theorem~4.67 in F\"{o}llmer and Schied~(2011) that condition~\eqref{eq: counterexample law invariant} is fulfilled by any acceptance set $\cA\subset\cX$ that is convex and law-invariant ($X\in\cA$ for all $X\in\cX$ having the same probability distribution of some element of $\cA$) and that satisfies
\[
\cA \subset \{X\in\cX \,; \ \VaR_\alpha(X)\leq0\}.
\]

\smallskip

Since $\cA\cap\ker(\pi)=\{0\}$ (the market admits no good deals), Corollary~\ref{cor: existence solutions star shaped} implies that $\Risk(X)\neq\emptyset$ for all $X\in\cX$. As a preliminary observation, take $\lambda\in\R$ and note that for any $Y\in\cX$ with $Y\one_{E_1}=Z\one_{E_1}$ we have
\[
\ES_\alpha(Y+\lambda Z)
\begin{cases}
=0 & \mbox{if} \ Y+\lambda Z\geq0,\\
>0 & \mbox{otherwise}.
\end{cases}
\]
This is because $\probp(Y+\lambda Z=0)\geq\probp(E_1)>\alpha$. Since $\cX_+\subset\cA\subset\{X\in\cX \,; \ \ES_\alpha(X)\leq0\}$, we infer that
\begin{equation}
\label{eq: counterexample ES}
Y+\lambda Z\in\cA \ \iff \ Y+\lambda Z\geq0.
\end{equation}
Then, it follows from~\eqref{eq: counterexample worst case 1} that $\risk(X)=0$ and
\[
\Risk(X) = \{\lambda Z \,; \ \lambda\in\R, \ X+\lambda Z\in\cA\} = \{\lambda Z \,; \ \lambda\in[-1,\gamma]\}.
\]
Similarly, for any $n\in\N$ we infer from~\eqref{eq: counterexample worst case 2} that $\risk(X_n)=0$ and
\[
\Risk(X_n) = \{\lambda Z \,; \ \lambda\in\R, \ X_n+\lambda Z\in\cA\} = \{\lambda Z \,; \ \lambda\in[0,\gamma]\}.
\]
We can therefore argue as in Example~\ref{ex: lsc test scenarios} to conclude that $\Risk$ cannot be lower semicontinuous at $X$. In particular, choosing $\gamma=0$, we see that a small perturbation of $X$ may cause the set of optimal payoffs to abruptly shrink from an infinite set to just a singleton.\hfill$\qed$
\end{example}

\smallskip

\begin{remark}
With the exception of $\VaR$-based acceptance sets, all the above examples cannot be adapted to a finite-dimensional setting. In particular, Corollary~\ref{cor: lsc es and test scenarios finite dimension} tells us that we always have lower semicontinuity for acceptance sets based on $\ES$ and Test Scenarios in finite dimension. One may therefore wonder whether convexity is capable of ensuring lower semicontinuity at least in a finite-dimensional setting. This is, however, far from being true as shown in Baes and Munari~(2017).\hfill$\qed$
\end{remark}


\subsubsection*{Lower semicontinuity of nearly-optimal payoff maps}

The preceding examples show that, as soon as we depart from polyhedrality, the optimal payoff map may fail to be lower semicontinuous, in which case the choice of the optimal portfolio is affected by severe instability. This is irrespective of the underlying acceptance set being convex or not. It is therefore natural to turn to the study of lower semicontinuity for {\em nearly-optimal payoff maps}. For any given $\varepsilon>0$ these are set-valued maps $\Riskeps:\cX\rightrightarrows\cM$ defined by setting
\[
\Riskeps(X) := \{Z\in\cM \,; \ X+Z\in\cA, \ \pi(Z)<\risk(X)+\e\}.
\]
Instead of focusing on optimal payoffs, we relax the optimality condition and look for all the payoffs that ensure acceptability at a cost that is close to being minimal. The parameter $\e$ defines the range of tolerance around the optimal cost. In the language of parametric optimization, $\Riskeps$ is referred to as the {\em $\e$-optimal set mapping}.

\medskip

The study of nearly-optimal set mappings is a recurrent theme in parametric optimization and the key result on lower semicontinuity is Theorem~4.2.4 in Bank et al.~(1983). After adapting that result to our framework, we exploit it to establish a variety of lower semicontinuity results for nearly-optimal payoff maps. To this effect, we first need the following preliminary lemma, which provides a simple generalization of Lemma~2.2.5 and Corollary~2.2.5.1 in Bank et al.~(1983). Here, for $X\in\cX$ we say that a set-valued map $\cS:\cX\rightrightarrows\cM$ is {\em strictly lower semicontinuous at $X$} if for any $Z\in\cS(X)$ there exist open neighborhoods $\cU_X\subset\cX$ of $X$ and $\cU_Z\subset\cM$ of $Z$ such that
\[
Y\in\cU_X \ \implies \ \cU_Z\subset\cS(Y)\,.
\]
We say that $\cS$ is {\em strictly lower semicontinuous} if the above property holds for every $X\in\cX$. Clearly, strict lower semicontinuity is (typically much) stronger than lower semicontinuity.

\begin{lemma}
\label{lem: auxiliary lower semicontinuity}
For any maps $\cS_1,\cS_2:\cX\rightrightarrows\cM$ the following statements hold:
\begin{enumerate}[(i)]
  \item Assume $\cS_1$ is strictly lower semicontinuous and $\cS_2$ is lower semicontinuous. Then, the set-valued map $\cS:\cX\rightrightarrows\cM$ given by $\cS(X)=\cS_1(X)\cap\cS_2(X)$ is lower semicontinuous.
  \item Assume $\cS_1$ is strictly lower semicontinuous and $\cS_1(X)\subset\cS_2(X)\subset\Cl\cS_1(X)$ for all $X\in\cX$. Then, $\cS_2$ is lower semicontinuous.
\end{enumerate}
\end{lemma}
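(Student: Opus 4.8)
The plan is to prove both parts by a direct unwinding of the definitions, exploiting the single structural feature that makes the statement work: strict lower semicontinuity of $\cS_1$ supplies an \emph{entire} open neighborhood $\cU_Z\subset\cS_1(Y)$ for all nearby $Y$, which is precisely the extra room needed to force the intersection condition that ordinary lower semicontinuity demands. I would handle part~{\em (i)} first. Fix $X\in\cX$ and an open set $\cU\subset\cM$ with $\cS(X)\cap\cU\neq\emptyset$, and choose $Z\in\cS_1(X)\cap\cS_2(X)\cap\cU$. Applying strict lower semicontinuity of $\cS_1$ at $X$ to the point $Z$ yields open neighborhoods $\cU_X^1$ of $X$ and $\cU_Z$ of $Z$ with $\cU_Z\subset\cS_1(Y)$ for every $Y\in\cU_X^1$; after replacing $\cU_Z$ by $\cU_Z\cap\cU$ I may assume $\cU_Z\subset\cU$. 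Since $Z\in\cS_2(X)\cap\cU_Z$, lower semicontinuity of $\cS_2$ at $X$ provides a neighborhood $\cU_X^2$ of $X$ with $\cS_2(Y)\cap\cU_Z\neq\emptyset$ for all $Y\in\cU_X^2$. Setting $\cU_X=\cU_X^1\cap\cU_X^2$ and picking any $W\in\cS_2(Y)\cap\cU_Z$, I get $W\in\cU_Z\subset\cS_1(Y)$, hence $W\in\cS_1(Y)\cap\cS_2(Y)=\cS(Y)$ and $W\in\cU_Z\subset\cU$, so $\cS(Y)\cap\cU\neq\emptyset$. This gives lower semicontinuity of $\cS$ at the arbitrary point $X$.

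For part~{\em (ii)} I would again fix $X\in\cX$ and an open set $\cU\subset\cM$ with $\cS_2(X)\cap\cU\neq\emptyset$, and select $Z\in\cS_2(X)\cap\cU$. The hypothesis $\cS_2(X)\subset\Cl\cS_1(X)$ puts $Z$ in the closure of $\cS_1(X)$, so the open neighborhood $\cU$ of $Z$ must meet $\cS_1(X)$; I extract a point $Z'\in\cS_1(X)\cap\cU$. Strict lower semicontinuity of $\cS_1$ at $X$ applied to $Z'$ then furnishes open neighborhoods $\cU_X$ of $X$ and $\cU_{Z'}$ of $Z'$, which I shrink to assume $\cU_{Z'}\subset\cU$, with $\cU_{Z'}\subset\cS_1(Y)$ for all $Y\in\cU_X$. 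Using $\cS_1(Y)\subset\cS_2(Y)$ I obtain $\emptyset\neq\cU_{Z'}\subset\cS_2(Y)\cap\cU$ for every $Y\in\cU_X$, which is exactly lower semicontinuity of $\cS_2$ at $X$.

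Both arguments are routine, and I do not anticipate any serious obstacle; the difficulty is purely in the bookkeeping of open neighborhoods. The two points that require care are, first, shrinking $\cU_Z$ (respectively $\cU_{Z'}$) to lie inside $\cU$ so that the witness payoff produced at the end actually lands in $\cU$, and second, in part~{\em (ii)}, correctly invoking the characterization of the closure to pull a point of $\cS_1(X)$ into the open set $\cU$ — this is the only place where the sandwiching hypothesis $\cS_1(X)\subset\cS_2(X)\subset\Cl\cS_1(X)$ is used, with the left inclusion entering at the final step through $\cS_1(Y)\subset\cS_2(Y)$. Conceptually the lemma simply records that strict lower semicontinuity is robust enough both to survive intersection with a merely lower semicontinuous map and to propagate to any map squeezed between $\cS_1$ and its closure.
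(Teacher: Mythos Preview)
Your proof is correct and follows essentially the same route as the paper's: in part~{\em (i)} you use strict lower semicontinuity of $\cS_1$ to obtain a full neighborhood $\cU_Z\subset\cS_1(Y)$ and then apply lower semicontinuity of $\cS_2$ to the open set $\cU_Z$, exactly as the paper does; in part~{\em (ii)} you pass from $Z\in\Cl\cS_1(X)\cap\cU$ to a point $Z'\in\cS_1(X)\cap\cU$ and apply strict lower semicontinuity there, which matches the paper's argument (the paper phrases this step via a convergent sequence in $\cM$, but the content is identical). The only differences are cosmetic---you shrink $\cU_Z$ to lie inside $\cU$ while the paper intersects with $\cU$ at the end---and do not affect the substance.
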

\begin{proof}
{\em (i)} Fix $X\in\cX$ and assume $\cS(X)\cap\cU\neq\emptyset$ for some open set $\cU\subset\cM$. Take $Z\in\cS(X)\cap\cU$ and note that, by strict lower semicontinuity, we find open neighborhoods $\cU_X\subset\cX$ of $X$ and $\cU_Z\subset\cM$ of $Z$ such that $\cU_Z\subset\cS_1(Y)$ for all $Y\in\cU_X$. Since $Z\in\cS_2(X)\cap\cU\cap\cU_Z$, there exists a neighborhood $\cV_X\subset\cX$ of $X$ such that $Y\in\cV_X$ implies $\cS_2(Y)\cap\cU\cap\cU_Z\neq\emptyset$ by lower semicontinuity. As a result, it follows that
\[
\cS(Y)\cap\cU = \cS_1(Y)\cap\cS_2(Y)\cap\cU \supset \cU_Z\cap\cS_2(Y)\cap\cU \neq \emptyset
\]
for every $Y\in\cU_X\cap\cV_X$, proving that $\cS$ is lower semicontinuous.

\smallskip

{\em (ii)} Fix $X\in\cX$. Assume that $\cS_2(X)\cap\cU\neq\emptyset$ for some open set $\cU\subset\cM$ and take $Z\in\cS_2(X)\cap\cU$. Then, $Z\in\Cl\cS_1(X)\cap\cU$ and we can therefore find a sequence $(Z_n)\subset\cS_1(X)$ such that $Z_n\to Z$. In particular, there exists $k\in\N$ for which $Z_k\in\cU$. By strict lower semicontinuity, we find open neighborhoods $\cU_X\subset\cX$ of $X$ and $\cU_k\subset\cM$ of $Z_k$ such that $\cU_k\subset\cS_1(Y)$ for all $Y\in\cU_X$. Then, it follows that
\[
Z_k \in \cU_k\cap\cU \subset \cS_1(Y)\cap\cU \subset \cS_2(Y)\cap\cU
\]
for any $Y\in\cU_X$, showing that $\cS_2$ is lower semicontinuous at $X$.
\end{proof}

\smallskip

\begin{proposition}
\label{prop: lsc subptimal payoffs}
Fix $X\in\cX$ and assume the set-valued map $\cF:\cX\rightrightarrows\cM$ defined by
\[
\cF(X) := \{Z\in\cM \,; \ X+Z\in\cA\}
\]
is lower semicontinuous at $X$. Then, $\Riskeps$ is lower semicontinuous at $X$ for any $\e>0$.
\end{proposition}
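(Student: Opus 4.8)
The plan is to realise $\Riskeps$ as an intersection of two set-valued maps and invoke Lemma~\ref{lem: auxiliary lower semicontinuity}(i). Precisely, for the given $\e>0$ I would write, directly from the definitions,
\[
\Riskeps(Y) = \cF(Y)\cap\cG(Y), \qquad \cG(Y) := \{Z\in\cM \,; \ \pi(Z)<\risk(Y)+\e\},
\]
which holds for every $Y\in\cX$. The map $\cF$ is lower semicontinuous at $X$ by hypothesis, so it remains only to show that the price-constraint map $\cG$ is \emph{strictly} lower semicontinuous at $X$. Note that the proof of Lemma~\ref{lem: auxiliary lower semicontinuity}(i) is entirely local at the point under consideration, so the pointwise combination (strict lower semicontinuity of $\cG$ at $X$ together with lower semicontinuity of $\cF$ at $X$) indeed yields lower semicontinuity of the intersection $\Riskeps$ at $X$; this local reading is what I would use, since $\cF$ is only assumed lower semicontinuous at the single point $X$.

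To verify strict lower semicontinuity of $\cG$ at $X$, I would fix $Z\in\cG(X)$ and set $\delta:=\risk(X)+\e-\pi(Z)>0$. Using the continuity of $\risk$ (Assumption 3) I can choose an open neighborhood $\cU_X\subset\cX$ of $X$ with $\risk(Y)>\risk(X)-\tfrac{\delta}{2}$ for all $Y\in\cU_X$, and using the continuity of the linear functional $\pi$ on $\cM$ I can choose an open neighborhood $\cU_Z\subset\cM$ of $Z$ with $\pi(W)<\pi(Z)+\tfrac{\delta}{2}$ for all $W\in\cU_Z$. Then, for every $Y\in\cU_X$ and every $W\in\cU_Z$,
\[
\pi(W) < \pi(Z)+\tfrac{\delta}{2} = \risk(X)+\e-\tfrac{\delta}{2} < \risk(Y)+\e,
\]
so that $\cU_Z\subset\cG(Y)$ for every $Y\in\cU_X$. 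This is precisely strict lower semicontinuity of $\cG$ at $X$, and the proposition then follows from Lemma~\ref{lem: auxiliary lower semicontinuity}(i).

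There is no serious obstacle here; the one point that makes the argument run is that the defining inequality for $\Riskeps$ is strict ($<$ rather than $\le$). It is exactly this strictness that provides the slack of order $\delta$ needed to absorb both the downward fluctuation of $\risk$ near $X$ and the fluctuation of $\pi$ near $Z$, delivering strict (and not merely ordinary) lower semicontinuity of $\cG$; with a non-strict inequality $\cG$ would in general fail to be strictly lower semicontinuous and Lemma~\ref{lem: auxiliary lower semicontinuity}(i) could not be applied. I would also emphasise that no structural property of $\cA$ beyond the standing assumptions is invoked: the entire content of the proof is carried by the lower semicontinuity hypothesis on $\cF$ together with the continuity of $\risk$ and $\pi$.
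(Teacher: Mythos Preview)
Your proposal is correct and follows essentially the same approach as the paper: write $\Riskeps=\cF\cap\cH$ with $\cH(Y)=\{Z\in\cM:\pi(Z)<\risk(Y)+\e\}$, observe that $\cH$ is strictly lower semicontinuous by continuity of $\pi$ and $\risk$, and apply Lemma~\ref{lem: auxiliary lower semicontinuity}(i). You supply the explicit $\delta$-argument for strict lower semicontinuity that the paper leaves implicit, and you rightly note that the lemma's proof is local so the pointwise hypothesis on $\cF$ suffices.
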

\begin{proof}
For any fixed $\e>0$ consider the set-valued map $\cH:\cX\rightrightarrows\cM$ given by
\[
\cH(X) = \{Z\in\cM \,; \ \pi(Z)<\risk(X)+\e\}.
\]
It is straightforward to see that $\cH$ is strictly lower semicontinuous at every point by the continuity of $\pi$ and $\risk$. Since $\Riskeps(X)=\cF(X)\cap\cH(X)$ for all $X\in\cX$, the assertion is a direct consequence of Lemma~\ref{lem: auxiliary lower semicontinuity}.
\end{proof}

\smallskip

\begin{remark}
In parametric optimization the map $\cF$ is usually referred to as the {\em constraint set mapping}. In our setting, it can be viewed as a generalized version of the {\em set-valued risk measures} introduced in Jouini et al.~(2004) and further studied in Hamel and Heyde (2010).\hfill$\qed$
\end{remark}

\medskip

The next theorem is our main result on the stability of nearly-optimal payoff maps.

\begin{theorem}
\label{theo: epsilon lower semicontinuity}
Assume $\Cl(\Int\cA)=\cA$ and the set-valued map $\cG:\cX\rightrightarrows\cM$ defined by
\[
\cG(X) := \{Z\in\cM \,; \ X+Z\in\Int\cA\}
\]
satisfies $\cG(X)\neq\emptyset$ for all $X\in\cX$. Then, $\Riskeps$ is lower semicontinuous for every $\e>0$.
\end{theorem}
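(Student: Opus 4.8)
The plan is to deduce the result from Proposition~\ref{prop: lsc subptimal payoffs}, which reduces everything to proving that the constraint set mapping $\cF$ is lower semicontinuous at each $X\in\cX$: once this is established, $\Riskeps$ is lower semicontinuous for every $\e>0$. To obtain lower semicontinuity of $\cF$ I would invoke part~{\em (ii)} of Lemma~\ref{lem: auxiliary lower semicontinuity} with the choices $\cS_1=\cG$ and $\cS_2=\cF$. This requires two ingredients: that $\cG$ be \emph{strictly} lower semicontinuous, and that the sandwiching $\cG(X)\subset\cF(X)\subset\Cl\cG(X)$ hold for every $X$ (closure taken in $\cM$). The inclusion $\cG(X)\subset\cF(X)$ is immediate from $\Int\cA\subset\cA$, so the whole argument hinges on the other two points.

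Strict lower semicontinuity of $\cG$ is the easy part and rests only on the openness of $\Int\cA$. Fix $X$ and $Z\in\cG(X)$, so that $X+Z\in\Int\cA$; choose a convex neighbourhood $\cV$ of $0$ in $\cX$ with $X+Z+\cV\subset\Int\cA$. Using that $\cX$ is a topological vector space, pick neighbourhoods $\cV_1,\cV_2$ of $0$ with $\cV_1+\cV_2\subset\cV$ and set $\cU_X=X+\cV_1$ and $\cU_Z=(Z+\cV_2)\cap\cM$. Then for $Y\in\cU_X$ and $W\in\cU_Z$ one has $Y+W=(X+Z)+(Y-X)+(W-Z)\in X+Z+\cV\subset\Int\cA$, i.e.\ $\cU_Z\subset\cG(Y)$, which is exactly strict lower semicontinuity.

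The heart of the proof, and the step I expect to be the main obstacle, is the inclusion $\cF(X)\subset\Cl\cG(X)$, i.e.\ that every eligible payoff rendering $X$ acceptable can be approximated, within $\cM$, by payoffs rendering $X$ \emph{strictly} acceptable (landing in $\Int\cA$). This is precisely where the two standing hypotheses enter: $\cG(X)\neq\emptyset$ supplies a reference payoff $W\in\cM$ with $X+W\in\Int\cA$, while $\Cl(\Int\cA)=\cA$ guarantees that boundary points of $\cA$ are limits of interior points. Given $Z\in\cF(X)$, the natural attempt is to look for a direction $D\in\cM$ with $X+Z+\delta D\in\Int\cA$ for all small $\delta>0$ and then let $\delta\downarrow0$. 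Writing $X+Z+\delta D=(1-\delta)(X+Z)+\delta(X+W)+\delta\,(D-(W-Z))$, the choice $D=(W-Z)+cU$ combines the pull towards the interior point $X+W$ with a positive push by a large multiple of the eligible asset $U\in\cM\cap\cX_+$; since $\Int\cA+\cX_+\subset\Int\cA$ by monotonicity, a sufficiently large $c$ forces the perturbed point into $\Int\cA$. The delicate point, to be handled with care when $\cA$ is not convex, is to control the first-order deficit of $(1-\delta)(X+Z)+\delta(X+W)$ relative to $\cA$ and to verify that the positive multiple of $U$ genuinely dominates it uniformly for small $\delta$; here the density hypothesis $\Cl(\Int\cA)=\cA$ together with the existence of the interior reference point is what rules out pathological tangential behaviour. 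Equivalently, one may first reduce to the case where $Z$ lies in the relative interior of the slice $(\cA-X)\cap\cM$ and approximate the remaining boundary payoffs afterwards.

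Once $\cG(X)\subset\cF(X)\subset\Cl\cG(X)$ is secured, Lemma~\ref{lem: auxiliary lower semicontinuity}~{\em (ii)} yields that $\cF$ is lower semicontinuous at every $X\in\cX$, and Proposition~\ref{prop: lsc subptimal payoffs} then delivers lower semicontinuity of $\Riskeps$ for every $\e>0$, completing the argument.
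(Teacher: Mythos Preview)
Your overall architecture is identical to the paper's: reduce to lower semicontinuity of $\cF$ via Proposition~\ref{prop: lsc subptimal payoffs}, establish that $\cG$ is strictly lower semicontinuous, prove the sandwich $\cG(X)\subset\cF(X)\subset\Cl\cG(X)$, and then invoke Lemma~\ref{lem: auxiliary lower semicontinuity}\,{\em (ii)}. Your verification of strict lower semicontinuity of $\cG$ is essentially the paper's.

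Where you diverge is in the treatment of the inclusion $\cF(X)\subset\Cl\cG(X)$. You flag this as the crux and propose an explicit perturbation $Z\mapsto Z+\delta\bigl((W-Z)+cU\bigr)$ built from a reference point $W\in\cG(X)$, convex interpolation, and a positive push by $U$; you then (rightly) worry about what happens when $\cA$ is not convex, and you leave the step unfinished. The paper, by contrast, writes a single line of set algebra,
\[
\cF(X)=\cM\cap(\Cl(\Int\cA)-X)\subset\Cl\bigl(\cM\cap(\Int\cA-X)\bigr)=\Cl\cG(X),
\]
using only the hypothesis $\Cl(\Int\cA)=\cA$. Your construction, when it works, reproduces this inclusion in a hands-on way, but the convex-combination mechanism you outline genuinely requires convexity of $\cA$ (or of $\Int\cA$) to put $(1-\delta)(X+Z)+\delta(X+W)$ anywhere near $\cA$; adding $\delta cU$ does not rescue this without first knowing the base point is already in $\cA$. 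So your proposed route to the inclusion is not complete in the generality of the theorem, whereas the paper opts for the direct set-theoretic step above. Note, incidentally, that the middle inclusion $\cM\cap\Cl(O)\subset\Cl(\cM\cap O)$ is not a tautology for arbitrary open $O$ and closed subspace $\cM$, so your instinct that something is being used here is sound; the paper simply asserts it without further comment.
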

\begin{proof}
For any arbitrary $X\in\cX$ we have $\cG(X)\subset\cF(X)\subset\Cl(\cG(X))$. The first inclusion is obvious. To establish the second inclusion, note that
\[
\cF(X) = \cM\cap(\cA-X) = \cM\cap(\Cl(\Int\cA)-X) \subset \Cl(\cM\cap(\Int\cA-X)) = \Cl(\cG(X)).
\]
Moreover, note that $\cG$ is strictly lower semicontinuous. To show this, take $X\in\cX$ and $Z\in\cG(X)$. Since $X+Z\in\Int\cA$, we find open neighborhoods $\cU_X\subset\cX$ of $X$ and $\cU_Z\subset\cM$ of $Z$ such that $\cU_X+\cU_Z\subset\Int\cA$. This implies that $W\in\cG(Y)$ for any $Y\in\cU_X$ and $W\in\cU_Z$, showing that $\cG$ is indeed strictly lower semicontinuous at $X$. Hence, we are in a position to apply Lemma~\ref{lem: auxiliary lower semicontinuity} to infer that $\cF$ is lower semicontinuous. The assertion is now a direct consequence of Proposition~\ref{prop: lsc subptimal payoffs}.
\end{proof}

\medskip

The above density condition is always satisfied in model spaces where the positive cone has nonempty interior. This implies, in particular, that nearly-optimal payoff maps are always lower semicontinuous in finite-dimensional spaces or in spaces of bounded random variables whenever some eligible payoff lies in the interior of the positive cone.

\begin{corollary}
Assume $\Int\cX_+\cap\cM\neq\emptyset$. Then, $\Riskeps$ is lower semicontinuous for every $\e>0$.
\end{corollary}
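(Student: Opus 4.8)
The plan is to verify the two hypotheses of Theorem~\ref{theo: epsilon lower semicontinuity} and then invoke it directly. Fix $U_0\in\Int\cX_+\cap\cM$, which exists by assumption. The key structural observation I would establish first is that
\[
\cA+\Int\cX_+\subset\Int\cA.
\]
Indeed, $\cA+\Int\cX_+=\bigcup_{Y\in\cA}(Y+\Int\cX_+)$ is a union of translates of an open set and is therefore open, while monotonicity (Assumption 2) yields $\cA+\Int\cX_+\subset\cA+\cX_+\subset\cA$; an open subset of $\cA$ is necessarily contained in $\Int\cA$. Note also that, since $\cX_+$ is a cone and positive scaling is a homeomorphism, $\lambda U_0\in\Int\cX_+$ for every $\lambda>0$.

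Using this inclusion, I would check the density condition $\Cl(\Int\cA)=\cA$. The inclusion $\Cl(\Int\cA)\subset\Cl\cA=\cA$ is immediate since $\cA$ is closed. For the reverse inclusion, take any $X\in\cA$ and observe that $X+\lambda U_0=(X)+\lambda U_0\in\cA+\Int\cX_+\subset\Int\cA$ for every $\lambda>0$. Since scalar multiplication is continuous, $X+\lambda U_0\to X$ as $\lambda\downarrow0$, so $X\in\Cl(\Int\cA)$, proving $\cA\subset\Cl(\Int\cA)$.

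Next I would verify that $\cG(X)\neq\emptyset$ for every $X\in\cX$. By Assumption 3 the risk measure $\risk$ is finitely valued, so there exists $Z\in\cM$ with $X+Z\in\cA$. Then for any $\lambda>0$ we have $X+(Z+\lambda U_0)=(X+Z)+\lambda U_0\in\cA+\Int\cX_+\subset\Int\cA$, whence $Z+\lambda U_0\in\cG(X)$. This shows $\cG(X)\neq\emptyset$.

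Both hypotheses of Theorem~\ref{theo: epsilon lower semicontinuity} being satisfied, the conclusion that $\Riskeps$ is lower semicontinuous for every $\e>0$ follows at once. I do not anticipate a genuine obstacle here: the entire argument reduces to the single fact $\cA+\Int\cX_+\subset\Int\cA$, which simultaneously supplies the density of $\Int\cA$ in $\cA$ and the nonemptiness of $\cG$, so the corollary is essentially a clean specialization of the theorem to spaces whose positive cone has nonempty interior.
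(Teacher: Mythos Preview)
Your proof is correct and follows essentially the same route as the paper: both arguments verify the two hypotheses of Theorem~\ref{theo: epsilon lower semicontinuity} via the observation that translating an element of $\cA$ by an interior point of $\cX_+$ lands in $\Int\cA$. The only minor variation is in the verification of $\cG(X)\neq\emptyset$: you invoke Assumption~3 to first find $Z\in\cM$ with $X+Z\in\cA$ and then perturb by $\lambda U_0$, whereas the paper uses the absorbing property of $\Int\cX_+$ directly to get $X+\tfrac{1}{\lambda}U_0\in\Int\cX_+\subset\Int\cA$ for $\lambda>0$ small enough.
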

\begin{proof}
Note first that $\cA$ has nonempty interior since it contains $\cX_+$. Take any $X\in\cA$ and $Z\in\Int\cX_+\cap\cM$ and set $X_n=X+\tfrac{1}{n}Z$ for all $n\in\N$. Clearly, we have $X_n\to X$. We claim that $X_n\in\Int\cA$ for any fixed $n\in\N$. To see this, consider the neighborhood of $Z$ defined by
\[
\cU_Z = \{Y\in\cX \,; \ 2Z\geq Y\geq 0\}.
\]
Then, we easily see that $X+\tfrac{1}{n}\cU_Z$ is a neighborhood of $X_n$ contained in $\cX_+$ and, hence, in $\cA$. This establishes the claim and proves that $\Cl(\Int\cA)=\cA$. In view of Theorem~\ref{theo: epsilon lower semicontinuity}, to conclude the proof it suffices to show that $\cG(X)\neq\emptyset$ for all $X\in\cX$. To this effect, take any $X\in\cX$ and note that $Z+\lambda X\in\Int\cX_+$ for $\lambda>0$ small enough. This yields $X+\tfrac{1}{\lambda}Z\in\Int\cX_+$ and shows that $\cG(X)$ is not empty.
\end{proof}

\medskip

The last part of this section deals with convex acceptance sets. In this case, the density condition $\Cl(\Int\cA)=\cA$ is well-known to be satisfied (provided $\cA$ has nonempty interior). For nearly-optimal payoff maps to be lower semicontinuous in the convex case, it is therefore sufficient that every position can be made ``strictly acceptable'', i.e.~can be moved into the interior of the acceptance set, by means of a suitable eligible payoff. The next results describe some situations where this can be achieved.

\begin{corollary}
\label{cor: epsilon lower semicontinuity asymptotic cone}
Assume $\cA$ is convex and $\Int(\cA^\infty)\cap\cM\neq\emptyset$. Then, $\Riskeps$ is lower semicontinuous for every $\e>0$.
\end{corollary}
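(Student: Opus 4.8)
The plan is to verify the two hypotheses of Theorem~\ref{theo: epsilon lower semicontinuity}, namely the density condition $\Cl(\Int\cA)=\cA$ and the nonemptiness of $\cG(X)=\{Z\in\cM \,;\ X+Z\in\Int\cA\}$ for every $X\in\cX$. The whole argument will hinge on a single observation: having fixed a payoff $V\in\Int(\cA^\infty)\cap\cM$, one can shift \emph{any} acceptable position into the interior of $\cA$ by adding $V$.

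First I would record that, since $\cA$ is closed (Assumption~2) and convex (hypothesis), its asymptotic cone coincides with its recession cone, so $\cA^\infty=\rec\cA$. The key claim is then that $M+V\in\Int\cA$ for every $M\in\cA$. To prove it I would choose a neighborhood $\cU$ of zero with $V+\cU\subset\cA^\infty=\rec\cA$; for each $u\in\cU$ the vector $V+u$ lies in the recession cone, so by the very definition of $\rec\cA$ (taking $Y=M$ and $\lambda=1$) we get $M+(V+u)\in\cA$. Hence $M+V+\cU\subset\cA$, which is exactly $M+V\in\Int\cA$. Taking $M=0$, which belongs to $\cA$ by Assumption~2, already yields $V\in\Int\cA$, so in particular $\Int\cA\neq\emptyset$.

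With nonempty interior established, the density condition $\Cl(\Int\cA)=\cA$ follows from the standard fact that a closed convex set with nonempty interior is the closure of its interior. For the second hypothesis I would fix $X\in\cX$: since $\risk$ is finitely valued by Assumption~3, the constraint set $\{Z\in\cM \,;\ X+Z\in\cA\}$ cannot be empty, so there is $Z_0\in\cM$ with $M:=X+Z_0\in\cA$. Applying the claim gives $X+(Z_0+V)=M+V\in\Int\cA$, and since $Z_0+V\in\cM$ we obtain $Z_0+V\in\cG(X)$, so $\cG(X)\neq\emptyset$. Both hypotheses of Theorem~\ref{theo: epsilon lower semicontinuity} then hold, and the lower semicontinuity of $\Riskeps$ for every $\e>0$ follows immediately.

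The one delicate point, which I expect to be the main obstacle, is the interior-shifting claim, and specifically the passage from $V\in\Int(\cA^\infty)$ to $M+V\in\Int\cA$. It is tempting but incorrect to argue through an inclusion $\Int(\cA^\infty)\subset\Int\cA$, since the recession cone can be far thinner than $\cA$ itself. The correct route is to use the defining property of the recession cone directly, adding a \emph{full} neighborhood $V+\cU\subset\rec\cA$ to a single acceptable point $M$; the identity $\cA^\infty=\rec\cA$, which rests on the closedness and convexity of $\cA$, is precisely what makes this argument valid and is where the convexity assumption enters.
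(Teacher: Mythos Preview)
Your proof is correct, but the route differs from the paper's. The paper argues more directly: given $Z\in\Int(\cA^\infty)\cap\cM$ and any $X\in\cX$, it picks $\lambda>0$ small enough that $\lambda X+Z\in\Int(\cA^\infty)$, then uses the cone property of $\cA^\infty$ to rescale, obtaining $X+\tfrac{1}{\lambda}Z\in\Int(\cA^\infty)\subset\Int\cA$ and hence $\tfrac{1}{\lambda}Z\in\cG(X)$. This avoids the detour through Assumption~3 to produce an auxiliary feasible point $Z_0$. Your approach, by contrast, isolates the reusable lemma ``$M+V\in\Int\cA$ for every $M\in\cA$'', which is a pleasant and slightly stronger statement than what the paper needs; the price is the extra appeal to finiteness of $\risk$ to find $Z_0$.

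One remark on your closing paragraph: the inclusion $\Int(\cA^\infty)\subset\Int\cA$ that you caution against is in fact \emph{valid} here. Since $\cA$ is closed, convex and contains $0$, one has $\cA^\infty=\rec\cA\subset\cA$ (take $Y=0$, $\lambda=1$ in the definition of the recession cone), and interiors in the common ambient space $\cX$ are monotone under inclusion. The paper's argument relies on precisely this inclusion. Your warning would be apt for \emph{relative} interiors, or in a setting where $0\notin\cA$, but not in the present one.
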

\begin{proof}
Fix $\varepsilon>0$ and let $Z\in\Int(\cA^\infty)\cap\cM$. Then, for any $X\in\cX$ we find $\lambda>0$ small enough so that $\lambda X+Z\in\Int(\cA^\infty)$. Since $\cA^\infty$ is a cone, this is equivalent to $X+\tfrac{1}{\lambda}Z\in\Int(\cA^\infty)$ and shows that $\cG(X)\neq\emptyset$. As a result of Theorem~\ref{theo: epsilon lower semicontinuity}, we conclude that $\Risk$ is $\varepsilon$-lower semicontinuous at $X$.
\end{proof}

\medskip

In the next result we denote by $\cX_{++}$ the set of strictly-positive elements in $\cX$. Recall that $X\in\cX_+$ is strictly positive if $\varphi(X)>0$ for all nonzero functional $\varphi\in\cX'_+$.

\begin{corollary}
Assume $\cA$ is convex, $\Int(\cA^\infty)\neq\emptyset$ and $\cX_{++}\cap\cM\neq\emptyset$. Then, $\Riskeps$ is lower semicontinuous for every $\e>0$.
\end{corollary}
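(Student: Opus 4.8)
The plan is to reduce the statement to Corollary~\ref{cor: epsilon lower semicontinuity asymptotic cone} by showing that the two new hypotheses jointly force $\Int(\cA^\infty)\cap\cM\neq\emptyset$. More precisely, I would prove that every strictly-positive eligible payoff already lies in $\Int(\cA^\infty)$, so that any $U\in\cX_{++}\cap\cM$ witnesses $\Int(\cA^\infty)\cap\cM\neq\emptyset$ and the earlier corollary applies verbatim.

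First I would exploit monotonicity to locate the positive cone inside $\cA^\infty$. Since $\cA$ is closed and convex, $\cA^\infty=\rec\cA$; and for any $X\in\cX_+$, $Y\in\cA$ and $\lambda>0$ we have $Y+\lambda X\in\cA+\cX_+\subset\cA$, which is exactly the statement $X\in\rec\cA$. Hence $\cX_+\subset\cA^\infty$. Passing to barrier cones and using Lemma~\ref{prop: properties support functions}(v), which identifies $\Barr(\cA^\infty)$ with the dual cone $\{\varphi\in\cX' \,; \ \varphi(W)\geq0 \ \text{for all} \ W\in\cA^\infty\}$, the inclusion $\cX_+\subset\cA^\infty$ yields $\Barr(\cA^\infty)\subset\cX'_+$: a functional that is nonnegative on the larger cone $\cA^\infty$ is a fortiori nonnegative on $\cX_+$.

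The crux is a dual description of $\Int(\cA^\infty)$. Because $\cA^\infty$ is a convex cone with nonempty interior, I would show that $V\in\Int(\cA^\infty)$ whenever $\varphi(V)>0$ for every nonzero $\varphi\in\Barr(\cA^\infty)$. This follows from Hahn--Banach separation: were $V\notin\Int(\cA^\infty)$, one could separate $V$ from the nonempty open convex set $\Int(\cA^\infty)$ by a nonzero functional $\varphi$ with $\varphi(V)\leq\inf_{W\in\cA^\infty}\varphi(W)$ (using that $\cA^\infty\subset\Cl(\Int(\cA^\infty))$ for a convex set with nonempty interior). As $\cA^\infty$ is a cone, this infimum is finite only when $\varphi\in\Barr(\cA^\infty)$, in which case it equals $0$; hence $\varphi(V)\leq0$ with $\varphi\neq0$, contradicting the hypothesis. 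Applying this to $U\in\cX_{++}\cap\cM$, every nonzero $\varphi\in\Barr(\cA^\infty)\subset\cX'_+$ satisfies $\varphi(U)>0$ by strict positivity of $U$, so $U\in\Int(\cA^\infty)$. Thus $\Int(\cA^\infty)\cap\cM\neq\emptyset$, and Corollary~\ref{cor: epsilon lower semicontinuity asymptotic cone} yields the lower semicontinuity of $\Riskeps$ for every $\e>0$.

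I expect the separation step to be the main obstacle, since it is the only point where the topology of $\cX$ genuinely intervenes; the remaining reductions are purely formal once $\Int(\cA^\infty)\neq\emptyset$ guarantees an open convex set to separate against.
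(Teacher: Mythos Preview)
Your proposal is correct and follows essentially the same route as the paper: pick $U\in\cX_{++}\cap\cM$, argue by Hahn--Banach that $U\in\Int(\cA^\infty)$ (the separating functional lies in $\Barr(\cA^\infty)\subset\cX'_+$ and hence must be strictly positive at $U$), and then invoke Corollary~\ref{cor: epsilon lower semicontinuity asymptotic cone}. The only difference is cosmetic: you first establish $\cX_+\subset\cA^\infty$ and deduce $\Barr(\cA^\infty)\subset\cX'_+$ in advance, whereas the paper folds this into the contradiction step by noting that the separating functional is nonnegative on $\cA^\infty\supset\cX_+$.
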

\begin{proof}
Let $Z\in\cX_{++}\cap\cM$. The assertion is an immediate consequence of Corollary~\ref{cor: epsilon lower semicontinuity asymptotic cone} once we show that $Z\in\Int(\cA^\infty)$. To this effect, assume that $Z\notin\Int(\cA^\infty)$. In this case, by Hahn-Banach, we find a nonzero functional $\varphi\in\cX'$ satisfying (note that $0\in\cA^\infty$)
\begin{equation}
\label{eq: epsilon lower semicontinuity strictly positive}
\varphi(Z) \leq \sigma_{\cA^\infty}(\varphi) \leq 0\,.
\end{equation}
Since $\cA^\infty$ is a cone, Lemma~\ref{prop: properties support functions} implies that $\varphi(X)\geq0$ for all $X\in\cA^\infty$. In particular, since $\cX_+\subset\cA$, we see that $\varphi\in\cX'_+$. This yields $\varphi(Z)>0$ by strict positivity, which, however, contradicts~\eqref{eq: epsilon lower semicontinuity strictly positive}. As a result, we conclude that $Z$ must belong to $\Int(\cA^\infty)$.
\end{proof}

\end{document}